\documentclass{article}
\usepackage{amsfonts,amssymb,amsthm,amsmath,amsrefs}
\usepackage{enumerate}



\theoremstyle{plain}
\newtheorem{theorem}{Theorem}[section]

\newtheorem{thm}{Theorem}

\newtheorem{lemma}[theorem]{Lemma}
\newtheorem{corollary}[theorem]{Corollary}
\newtheorem*{Cor*}{Corollary}
\newtheorem{proposition}[theorem]{Proposition}

\theoremstyle{definition}
\newtheorem{definition}[theorem]{Definition}
\newtheorem{example}[theorem]{Example}

\theoremstyle{remark}
\newtheorem{remark}[theorem]{Remark}

\numberwithin{equation}{section}

\newcommand\NN{\mathbb{N}}
\newcommand\ZZ{\mathbb{Z}}

\newcommand\RR{\mathbb{R}}
\newcommand\CC{\mathbb{C}}
\newcommand\KK{\mathbb{K}}

\newcommand\g{\mathfrak g}

\newcommand\z{\mathfrak z}
\newcommand\hc{\mathfrak{hc}}
\newcommand\hcn{\mathfrak{hc}(V)}

\newcommand\C{\mathcal C}
\newcommand\U{\mathfrak U}
\newcommand\supp{\mathrm{supp}}
\DeclareMathOperator\HC{\mathrm{HC}}
\DeclareMathOperator\Ad{\mathrm{Ad}}
\DeclareMathOperator\End{\mathrm{End}}

\DeclareMathOperator\Hom{\mathrm{Hom}}
\DeclareMathOperator\id{\mathrm{id}}

\begin{document}
\title{Harmonic Analysis on Heisenberg--Clifford Lie Supergroups}
\author{Alexander Alldridge, Joachim Hilgert, Martin Laubinger}
\maketitle
\date

%


\begin{abstract}
We define a Fourier transform and a convolution product for functions and distributions on Heisenberg--Clifford Lie supergroups. The Fourier transform exchanges the convolution and a pointwise product, and is an intertwining operator for the left regular representation. We generalize various classical theorems, including the Paley--Wiener--Schwartz theorem, and define a convolution Banach algebra.
\end{abstract}






\section{Introduction} \label{sct:Intro}
 
Recently, several attempts have been made to extend the notion of a Fourier transform to a supersymmetric context. An obstacle one encounters is that supercommutative Lie supergroups with non-trivial odd part do not admit enough unitary representations to decompose reasonable spaces of superfunctions.

In this paper we associate a natural Fourier transform with purely odd Lie supergroups. The situation is analogous to  geometric quantization of translation groups on vector spaces: The action on the cotangent bundle is Hamiltonian only after central extension. This leads to the canonical commutation relations and thus, to the definition of the Heisenberg group. Similarly, the central extension of the underlying supergroup of a super vector space produces the Heisenberg--Clifford supergroup which admits unitary representations in abundance. In fact, the representation theory of Heisenberg--Clifford algebras resembles the representation theory of Heisenberg groups, so that the harmonic analysis of phase space can be used as a guideline for the harmonic analysis of purely odd superspaces. 

In the present paper, we will restrict ourselves to the case of a purely odd super vector space and its central extension. In a later paper, we will combine the purely odd with the classical, purely even Fourier analysis to provide a complete picture of the harmonic analysis for Heisenberg--Clifford supergroups.


To put our work in perspective, we mention some previous work on Fourier transforms of functions on linear supermanifolds. The earliest reference known to the authors is the article \cite{Rempel} by Rempel and Schmitt. More recent papers include work by Brackx, De Schepper and Sommen~\cite{Brackx}, and by De Bie~\cite{DeBie}. Investigation of Fourier transform on Heisenberg--Clifford Supergroups was started by Bieliavsky, de Goursac and Tuynman in their preprint~\cite{Bieliavsky}. These approaches have in common that the Fourier transform is defined in close formal analogy with the formula
\[
\hat f(\zeta) = \int_{\RR} f(x) e^{-i \zeta x} \, \mathrm{d}x,
\]
and one of the crucial ideas is an appropriate generalization of the exponential $e^{i \zeta x}.$ Our approach is somewhat different, in that we take as a starting point the formula
\[
\hat f(\pi) = \int_G f(x) \pi(x) \, \mathrm{d} x,
\]
where $\pi$ is an irreducible representation of $G,$ and $\mathrm{d} x$ is a left Haar measure on $G.$ Of course, for $G =\RR$ the two formulas agree if we take for $\pi$ the unitary character $x \mapsto e^{-i\zeta x}.$ Our approach is naturally covariant and thus well-adapted to the supergroup structures. Indeed, we expect that it will generalize to arbitrary Lie supergroups (though, of course, these may not have any unitary representations in general). The irreducible unitary representations of the Heisenberg--Clifford Lie supergroup have been classified by Salmasian~\cite{Salm}, and our approach depends heavily on his classification.

The Fourier transform $\mathcal F$ that we introduce takes values in a certain endomorphism algebra $\mathcal H.$ We define a convolution product in analogy with 
\[
(f*g)(x) = \int_G f(y)g(y^{-1}x) \, \mathrm{d}y,
\]
which behaves well with respect to the Fourier transform in the sense that the Fourier transform $\mathcal F(F*G)$ agrees with the pointwise product $\mathcal F(F) \mathcal F(G)$ in $\mathcal H.$ This convolution product seems to be new. (However, Bieliavsky {\it et. al.} \cite{Bieliavsky} define a $\star$-product which is mapped to a pointwise product under a quantization map).


We give a brief summary of the paper and state the main results. 

In Section~\ref{sct:Prelim} we define the Heisenberg--Clifford Lie algebra $\hc = \hc(V, \beta)$ associated with a symplectic super vector space $(V, \beta).$ We restrict our attention to $V$ purely odd and $\beta$ positive definite and non-degenerate. Then we define a Lie supergroup $\HC=(\HC_0, \mathcal C^\infty_{\HC})$ with underlying Lie group $\HC_0 = \RR,$ and Lie superalgebra $\hc.$ We introduce a left invariant integral $\int_{\HC} F$ for smooth compactly supported functions $F \in \C_c^\infty(\HC)$ and define distributions on $\HC.$ This material is known, except possibly for our treatment of the invariant integral. If $(a_i)_{i=1}^n$ is an orthonormal basis of $(V, \beta),$ then $\gamma := a_1 \cdots a_n \in \U(\hc)$ depends only on the orientation of the basis. We define $ \int_{\HC}F := \int_\RR F(\gamma;x) \, \mathrm{d} x$ - in our case, this is just the well-known Berezin integral, but an appropriate choice of $\gamma$ will yield an invariant integral on more general supergroups. The invariant integral gives rise to a non-degenerate invariant pairing $\langle \cdot, \cdot \rangle$ between $\C^\infty(\HC)$ and $\C_c^\infty(\HC).$ Lastly, we define spaces $\mathcal D'(\HC)$ and $\mathcal E'(\HC)$ of distributions and compactly supported distributions as topological dual spaces. The non-degenerate pairing then allows us to identify a smooth function $F$ with the distribution $\Phi \mapsto \langle F, \Phi \rangle.$

The definition of a finite-dimensional unitary representation of $\HC$ is given in Section~\ref{sct:Rep}.  It is known that all irreducible unitary representations of $\HC$ are finite-dimensional if $V$ is purely odd. Representations of the universal enveloping algebra $\U(\hc)$ in which a central element $z$ acts by a scalar $i \zeta$ factor through a Clifford algebra $Cl(V_\CC, \zeta \beta).$ The spin module of this algebra is then used to define a representation $(\pi_\zeta, \mathcal S)$ whenever $\zeta \in \CC;$ real and positive values of $\zeta$ yield all unitary irreducible representations of $\HC,$ which follows from the results of Salmasian~\cite{Salm}. It is well-known that the Clifford algebra $Cl(V_\CC, \zeta \beta)$ is isomorphic to an algebra $\mathcal H$ of endomorphisms of $\mathcal S,$ and we define a trace $T$ and a sesquilinear form $\langle A | B\rangle = T(AB^\dagger)$ on $\mathcal H.$ 

In Section~\ref{sct:FT}, we combine the invariant integral with the family $(\pi_\zeta, \mathcal S)$ of representations in order to define the Fourier transform of $F$ at $\zeta \in \CC$ to be the $\mathcal H$-valued integral $\mathcal F(F)(\zeta) := \widehat F(\zeta) := \int_{\HC} F \cdot \pi_{-\zeta}.$ The first main theorem is the following:
\begin{thm}
The Fourier transformation intertwines the left regular representation with $\pi_{-\zeta},$ that is,
\[
(L_{u;x}F) \widehat \;(\zeta) = \pi_{-\zeta}(u;x) \widehat{F}(\zeta)
\]
for $\zeta \in \CC, u \in \U(\hc)$ and $x \in \RR.$ 
\end{thm}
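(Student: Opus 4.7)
The plan is to follow the classical template: use left-invariance of the integral $\int_{\HC}$ to transfer the translation from the function onto the representation, and then factor it out via the homomorphism property of $\pi_{-\zeta}$. As a first step, I would decompose $(u;x) = (u;0)\cdot(1;x)$ in the supergroup and, correspondingly, $\pi_{-\zeta}(u;x) = \pi_{-\zeta}(u)\pi_{-\zeta}(x)$, reducing the theorem to the two sub-cases where $(u;x)$ is either purely a classical translation $(1;x)$ or purely an enveloping algebra element $(u;0)$.

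For the $\RR$-translation $L_{1;x}$, the argument is essentially the classical one. Unwinding the definition of $\widehat F(\zeta)$ via the formula $\int_{\HC} G = \int_\RR G(\gamma;y)\,dy$, the Fourier transform of $L_{1;x}F$ becomes an ordinary integral $\int_\RR F(\gamma;x^{-1}y)\,\pi_{-\zeta}(y)\,dy$. The translation invariance of Lebesgue measure together with the multiplicativity $\pi_{-\zeta}(y) = \pi_{-\zeta}(x)\pi_{-\zeta}(x^{-1}y)$ then pulls $\pi_{-\zeta}(x)$ out of the integral. For the enveloping algebra part $L_{u;0}$, the key input is the left-invariance of $\int_{\HC}$ established in Section~\ref{sct:Prelim}: it allows one to convert the action of $u$ on $F$, which comes from the right-invariant super-derivation attached to the left regular action, into left multiplication by $\pi_{-\zeta}(u)$ on the $\mathcal H$-valued integrand. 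This uses that $\pi_{-\zeta}$, restricted to $\U(\hc)$, is an algebra homomorphism into $\End(\mathcal S)$, so that the left regular action on $F$ and the left multiplication on $\pi_{-\zeta}$ match up once the integral is shown to be invariant.

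The principal obstacle is the careful bookkeeping of super-signs in the $\U(\hc)$ piece. When $u$ is odd, $\pi_{-\zeta}(u)$ is an odd endomorphism of $\mathcal S$, and the Koszul signs that arise when commuting $u$ past various factors in the integrand $F \cdot \pi_{-\zeta}$ must be reconciled with those implicit in the definition of the pairing and the left regular action. An efficient route is to first verify the identity on generators $u \in \hc$ (so $u = a_i$ or $u = z$), and then extend to arbitrary $u \in \U(\hc)$ by induction on the length of a PBW monomial; each inductive step then reduces to the already established generator case and the representation property $\pi_{-\zeta}(uv) = \pi_{-\zeta}(u)\pi_{-\zeta}(v)$.
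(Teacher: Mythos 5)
Your proposal is correct and follows essentially the same route as the paper: the proof there consists precisely of applying the invariance of the pairing, $\langle L_{u;x}F,\pi_{-\zeta}\rangle = (-1)^{|u||F|}\langle F, L_{S(u);-x}\pi_{-\zeta}\rangle$, and then observing that $L_{S(u);-x}\pi_{-\zeta} = \pi_{-\zeta}(u;x)\,\pi_{-\zeta}$ by the multiplicativity of the representation, after which the constant factor is pulled out of the integral. The generator-by-generator verification and PBW induction you describe are not needed separately in the theorem's proof, since they are already packaged into the invariance lemma for the pairing established in Section~\ref{sct:Prelim}.
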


We introduce the Schwartz space $\mathcal S(\HC)$ as the space of functions $F \in \C^\infty(\HC)$ for which $F(u)$ is rapidly decreasing for all $u \in \U(\hc).$ Then the preceding theorem suggests a definition of a space $\mathcal S(\RR, \mathcal H)$  of $\mathcal H$-valued Schwartz functions on $\RR.$ The main idea is to define the components $A(u;\zeta)$ of a function $A: \RR \rightarrow \mathcal H$ in such a way that if $A = \widehat F,$ then $A(u)$ is the Fourier transform of $F(u).$ Then we can prove that the Fourier transform is an isomorphism of these topological vector spaces.
\begin{thm}
The Fourier transform restricts to an isomorphism of the topological vector spaces $\mathcal S(\HC)$ and $\mathcal S(\RR, \mathcal H).$
\end{thm}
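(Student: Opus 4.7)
The plan is to reduce the super-statement to the classical Fourier--Schwartz isomorphism on $\RR$ via a componentwise decomposition of both sides. Since $\HC_0 = \RR$ and $V$ is finite-dimensional and purely odd, one has $\C^\infty(\HC) \cong \C^\infty(\RR) \otimes \bigwedge V^*$, and fixing a PBW basis $(u_I)$ of $\U(\hc)$ yields scalar components $F(u_I;\cdot) \in \C^\infty(\RR)$ that completely determine $F$. By definition, $F \in \mathcal S(\HC)$ iff every $F(u_I;\cdot)$ lies in $\mathcal S(\RR)$, and the natural topology on $\mathcal S(\HC)$ is the initial topology generated by the classical Schwartz seminorms on the components. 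The target $\mathcal S(\RR, \mathcal H)$ is constructed in exactly the parallel fashion: the scalar components $A(u_I;\zeta)$ of an $\mathcal H$-valued function are defined precisely so that $A \in \mathcal S(\RR, \mathcal H)$ iff each $A(u_I;\cdot) \in \mathcal S(\RR)$.

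The crucial step is to apply the preceding intertwining theorem, namely $(L_{u;x}F)\widehat{\;}(\zeta) = \pi_{-\zeta}(u;x)\widehat F(\zeta)$, together with the explicit form of the invariant integral $\int_\HC F = \int_\RR F(\gamma;x)\,\mathrm d x$, to identify
\[
\widehat F(u_I;\zeta) = \int_\RR F(u_I;x)\,e^{-i\zeta x}\,\mathrm d x.
\]
This says that $\mathcal F$ acts componentwise as the classical Fourier transform on $\RR$. From here the theorem reduces to finitely many copies (indexed by $I$) of the classical Schwartz isomorphism $\mathcal S(\RR) \to \mathcal S(\RR)$: the inverse of $\mathcal F$ is the componentwise inverse Fourier transform, and continuity in both directions is immediate because the Schwartz topologies on both spaces are built from classical Schwartz seminorms on the scalar components.

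The main obstacle is the identification in the displayed equation above: one must verify that the extraction of scalar components $A \mapsto A(u;\cdot)$ from $\mathcal H$-valued functions is compatible both with the representation $\pi_{-\zeta}$ and with the PBW decomposition, so that the intertwining identity descends to the classical Fourier transform on components. Since $\mathcal H \cong Cl(V_\CC, \zeta\beta)$ has $\zeta$-dependent structure and $\pi_{-\zeta}(u)$ mixes the even (powers of the central generator $z$) and odd (Clifford) parts of $u \in \U(\hc)$, this requires careful bookkeeping using the trace $T$ and the sesquilinear form $\langle\cdot|\cdot\rangle$ on $\mathcal H$ introduced in Section~\ref{sct:Rep}. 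One must also check that the resulting scalar components of $\widehat F$ are genuinely Schwartz in $\zeta$; polynomial growth in $\zeta$ on the $\mathcal H$ side is controlled by applying sufficiently many derivatives and Clifford multiplications on the $\HC$ side, which are absorbed by the rapidly decreasing hypothesis on $F \in \mathcal S(\HC)$. Once this compatibility is in place, the rest of the proof is entirely classical.
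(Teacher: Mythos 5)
Your proposal is correct and follows essentially the same route as the paper: the components $A(u;\zeta)$ of an $\mathcal H$-valued function are defined (via the pairing $\langle A(\zeta)\,|\,d\pi_{-\zeta}(u)\rangle(-2\zeta)^{-[n]}$) precisely so that $\widehat F(u;\cdot)$ is the classical Fourier transform of $F(u;\cdot)$, and the theorem then reduces componentwise to the classical Schwartz isomorphism, with continuity checked on the seminorms. The only step you gloss over that the paper isolates as a separate lemma is the equivalence ``$A\in\mathcal S(\RR,\mathcal H)$ iff each $A(u)\in\mathcal S(\RR)$,'' which is not quite definitional but follows from $A(z^ju;\zeta)=(i\zeta)^jA(u;\zeta)$.
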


Next, we extend the definition of Fourier transform to compactly supported distributions. If $U \in \mathcal D'(\HC),$ then its Fourier transform $\widehat U(\zeta)$ extends to an entire holomorphic function on $\CC.$ Lastly, we prove a Paley--Wiener--Schwartz theorem, characterizing the image of the space $\C^\infty_{[-a,a]}(\HC)$ of functions with support in the compact interval $[-a,a]$ under the Fourier transform.
\begin{thm}
The Fourier transform is a bijection between the space $\C^\infty_{[-a,a]}(\HC)$ and the space of functions $A: \CC \rightarrow \mathcal H$ whose components satisfy the following exponential growth condition:

For every $N \in \NN$ there is a constant $C_N$ such that
\begin{equation*}
|T(A(\zeta) d\pi_{-\zeta}(u)))| \leq C_N (1+|\zeta|)^{-N} e^{a \mathrm{Im}(\zeta)} \quad \text{ for all} \; u \in \U(\hc), \, \zeta \in \CC.
\end{equation*}
\end{thm}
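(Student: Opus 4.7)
The strategy is to reduce the superspace Paley--Wiener--Schwartz estimate to the classical Paley--Wiener--Schwartz theorem on $\RR$ by analyzing the scalar components $T(\widehat F(\zeta)\, d\pi_{-\zeta}(u))$ of the $\mathcal H$-valued Fourier transform.

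The pivotal technical step is to identify, via the defining formula $\widehat F(\zeta) = \int_{\HC} F \cdot \pi_{-\zeta}$ together with the invariant-integral formula $\int_{\HC} G = \int_{\RR} G(\gamma;x)\, \mathrm{d}x$, the component $T(\widehat F(\zeta)\, d\pi_{-\zeta}(u))$ as the classical Fourier transform of a scalar coefficient function $F(\tilde u;\cdot)\colon \RR \to \CC$ for some $\tilde u \in \U(\hc)$ depending canonically on $u$ via the trace on $\mathcal H$. Concretely, one uses Theorem~A (intertwining) to absorb $d\pi_{-\zeta}(u)$ into a left-translate of $F$, and then the trace formula on the Clifford algebra $\mathcal H$ to extract a specific coefficient of the translate.

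Once this dictionary is established, the forward direction is immediate: if $F \in \C^\infty_{[-a,a]}(\HC)$, then each coefficient $F(\tilde u;\cdot)$ is a smooth scalar function supported in $[-a,a]$, and the classical Paley--Wiener--Schwartz theorem on $\RR$ produces the required exponential-type bound with appropriate seminorm control for every $N$. For the converse, given $A\colon \CC \to \mathcal H$ satisfying the growth condition, the restriction $A|_{\RR}$ lies in $\mathcal S(\RR,\mathcal H)$, so Theorem~B yields a unique $F \in \mathcal S(\HC)$ with $\widehat F = A|_{\RR}$. Applying the classical converse Paley--Wiener--Schwartz theorem to each component shows that every $F(\tilde u;\cdot)$ is supported in $[-a,a]$; letting $u$ run over a basis of $\U(\hc)$ for which $\tilde u$ also spans, this forces $F \in \C^\infty_{[-a,a]}(\HC)$. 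One must also verify that $\widehat F$ extends to an entire function agreeing with $A$ on all of $\CC$, which follows from uniqueness of holomorphic extensions once the components coincide on $\RR$.

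The main obstacle is the component-identification step. It requires explicit Clifford-algebra computations to verify that the trace pairing $A \mapsto T(A\, d\pi_{-\zeta}(u))$ indeed recovers the coefficient-extraction $F \mapsto F(\tilde u;\cdot)$, and, crucially, that the assignment $u \mapsto \tilde u$ is rich enough to separate functions on the supermanifold; the latter is needed to transfer support information from individual scalar components back to the whole section $F$. Once this dictionary is in place, both implications reduce to standard one-variable harmonic analysis on $\RR$.
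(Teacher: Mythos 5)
Your proposal is correct and follows essentially the same route as the paper: both reduce the statement to the classical Paley--Wiener--Schwartz theorem on $\RR$ by identifying the scalar quantities $T(\widehat F(\zeta)\,d\pi_{-\zeta}(a_I))$, via the trace formula $T(d\pi_{-\zeta}(\gamma))=(-2\zeta)^{[n]}$ and $T(d\pi_{-\zeta}(a_I))=0$ for $I\subsetneq \underline n$, with (constant multiples of) the classical Fourier transforms of the coefficients $F(a_I)$, and then apply the classical theorem componentwise in both directions. The only cosmetic differences are that the paper obtains the component identification by expanding $\widehat F(\zeta)=\sum_I F(a_I)\widehat{\;}(\zeta)\,d\pi_{-\zeta}(*a_I)$ directly in coordinates rather than via the intertwining theorem, and that the paper's converse assembles $F$ directly from the components produced by the classical theorem instead of detouring through the Schwartz-space isomorphism.
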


Let $(m,m^*)$ and $(i,i^*)$ denote the multiplication and inversion morphism of the Lie supergroup $\HC.$ The last section begins with the definition of a convolution product 
\[
(F*G)(u;x) := (-1)^{|u|(|G|+|\gamma|)} \langle F,L_{u;x} i^* G \rangle
\]
where $F$ and $G$ are smooth functions on $\HC,$ one of which is compactly supported. In the rather technical Proposition~\ref{prop:ConvExist} we prove that our formula indeed yields a smooth function on $\HC,$ and that
\[
\langle F*G, \Phi \rangle = \langle F \otimes G, m^*\Phi \rangle = \langle F, i^*(G*i^*\Phi) \rangle.
\]
In Theorem~\ref{thm:ConvFT} we prove the following property of the convolution product.
\begin{thm}
If $F$ and $G$ are smooth compactly supported functions on $\HC,$ then
\[
(F*G) \widehat \;(\zeta)= \widehat F(\zeta) \widehat G(\zeta),
\]
\end{thm}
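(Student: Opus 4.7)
The plan is to reduce the operator-valued identity $\widehat{F*G}(\zeta)=\widehat F(\zeta)\widehat G(\zeta)$ in $\mathcal H$ to a collection of scalar identities for its matrix entries, and then to combine two ingredients already at hand: the associativity-type identity $\langle F*G,\Phi\rangle=\langle F\otimes G,m^*\Phi\rangle$ from Proposition~\ref{prop:ConvExist}, and the multiplicativity of $\pi_{-\zeta}$, which is a morphism of Lie supergroups.

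First I would observe that, by the definition $\widehat F(\zeta)=\int_{\HC} F\cdot\pi_{-\zeta}$, the matrix entries of $\widehat F(\zeta)$ in a homogeneous basis $(e_k)$ of $\mathcal S$ are precisely the invariant pairings of $F$ with the scalar-valued smooth superfunctions $\phi_{k,\ell}(g):=\langle e_k,\pi_{-\zeta}(g)e_\ell\rangle$. The theorem is then equivalent to the scalar identity $\langle F*G,\phi_{k,\ell}\rangle=\sum_j \langle F,\phi_{k,j}\rangle\langle G,\phi_{j,\ell}\rangle$ for every $k,\ell$. Applying Proposition~\ref{prop:ConvExist} with $\Phi=\phi_{k,\ell}$ rewrites the left-hand side as $\langle F\otimes G,\,m^*\phi_{k,\ell}\rangle$, and the homomorphism property $\pi_{-\zeta}(gh)=\pi_{-\zeta}(g)\pi_{-\zeta}(h)$ combined with insertion of an identity on $\mathcal S$ gives
\[
(m^*\phi_{k,\ell})(g,h)=\langle e_k,\pi_{-\zeta}(g)\pi_{-\zeta}(h)e_\ell\rangle=\sum_j \phi_{k,j}(g)\,\phi_{j,\ell}(h).
\]
A super-Fubini argument on $\HC\times\HC$, iterating the Berezin-weighted integrals of Section~\ref{sct:Prelim} in the two factors, then splits $\langle F\otimes G,\phi_{k,j}\otimes\phi_{j,\ell}\rangle$ as a product of pairings on the two copies, and summing over $j$ produces the desired identity.

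The main obstacle will be the sign bookkeeping. The formula defining $F*G$ already carries the factor $(-1)^{|u|(|G|+|\gamma|)}$, and the graded evaluation of $\phi_{k,j}\otimes\phi_{j,\ell}$ against $F\otimes G$ introduces its own Koszul signs depending on the parities of $\phi_{k,j}$ and $G$. I expect these to balance exactly, because the choice $\gamma=a_1\cdots a_n$ in the definition of $\int_{\HC}$ fixes the correct orientation for the graded Fubini theorem, and because $\pi_{-\zeta}$ preserves the $\ZZ/2$-grading of $\mathcal S$, so that $\phi_{k,j}$ has well-defined parity $|e_k|+|e_j|$. Once the signs cancel, the scalar identities hold entry by entry and the theorem follows.
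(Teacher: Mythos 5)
Your proposal is correct and follows essentially the same route as the paper: both reduce $(F*G)\widehat{\;}(\zeta)$ via the identity $\langle F*G,\Phi\rangle=\langle F\otimes G,m^*\Phi\rangle$ of Proposition~\ref{prop:ConvExist} and then invoke the multiplicativity $(m^*\otimes\id_{\mathcal S})\circ\pi_{-\zeta}=(\id_{\mathcal S}\otimes\pi_{-\zeta})\circ\pi_{-\zeta}$ guaranteed by Proposition~\ref{prop:Rep}. The only difference is that the paper carries out the computation directly at the level of the $\mathcal H$-valued function $\pi_{-\zeta}$ rather than its matrix entries, which makes the Koszul-sign bookkeeping you defer at the end unnecessary.
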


The convolution product can be extended to include convolutions $U*F$ of a distribution $U$ and a smooth function $F,$ if one of $F,U$ is compactly supported. Lastly, we define Sobolev-type spaces $(W^{k,p}(\HC), \| \cdot \|_{k,p}).$ If the order of differentiability $k$ is large enough, the convolution product can be extended to these Banach spaces, and we prove
\begin{thm}
If $n = \dim V,$ the space $W^{n,1}(\HC)$ is a Banach algebra with respect to the convolution product.
\end{thm}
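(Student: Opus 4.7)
The plan is to reduce the Banach algebra estimate to the classical Young $L^1$-convolution inequality on $\RR$, applied componentwise to the smooth real functions that constitute a superfunction on $\HC$. The Sobolev norm takes the form $\|F\|_{n,1} = \sum_u \|F(u;\cdot)\|_{L^1(\RR)}$, where $u$ ranges over a chosen set of PBW basis elements of $\U(\hc)$ of degree at most $n$. Since $\C_c^\infty(\HC)$ is dense in $W^{n,1}(\HC)$, it suffices to prove the submultiplicative estimate $\|F*G\|_{n,1} \le C\|F\|_{n,1}\|G\|_{n,1}$ for $F, G \in \C_c^\infty(\HC)$; the continuous extension of the convolution to $W^{n,1}(\HC)$ and the Banach algebra property then follow by density.

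The key computation is to expand the components of $F*G$ into ordinary convolutions on $\RR$. Starting from the defining formula and the Berezin integral,
\[ (F*G)(u;x) = (-1)^{|u|(|G|+|\gamma|)}\int_\RR \bigl(F\cdot L_{u;x} i^*G\bigr)(\gamma;y)\,dy, \]
I would apply the super Leibniz rule to the product $F\cdot L_{u;x} i^*G$, together with the coproduct expansion $\Delta\gamma = \sum_{I\subseteq\{1,\dots,n\}}\pm\, a_I\otimes a_{I^c}$ in the Hopf superalgebra $\U(\hc)$. The integrand then splits into a finite sum of products $\pm F(a_I;y)\cdot(L_u i^*G)(a_{I^c};y-x)$, so that $(F*G)(u;\cdot)$ becomes a finite $\RR$-linear combination of ordinary convolutions of components of $F$ and of $L_u G$ (the reflection from $i^*$ being harmless since it preserves the $L^1(\RR)$-norms of components).

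The classical Young inequality then yields $\|f *_{\RR} g\|_{L^1} \le \|f\|_{L^1}\|g\|_{L^1}$ for each term. Expanding $L_u$-derivatives of components via the Clifford commutation relations in $\U(\hc)$, each quantity $\|(L_u G)(a_{I^c};\cdot)\|_1$ is a finite linear combination of the $\|G(w;\cdot)\|_1$ for $w$ in the PBW basis of bounded degree, and similarly for the $F$-factors. Summing over $I\subseteq\{1,\dots,n\}$ and over $u$ of total degree at most $n$ gives $\|F*G\|_{n,1}\le C\|F\|_{n,1}\|G\|_{n,1}$.

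The main technical obstacle is the degree bookkeeping. Naively, $a_{I^c}\cdot u$ could have PBW degree up to $2n$, but the Clifford relations $\{a_i,a_j\} = \delta_{ij}z$ force the odd degree of any PBW monomial to be bounded by $n=\dim V$, with excess odd factors collapsing into central powers $z^k$ (which act as $\RR$-translations and are absorbed into the convolution variable). The threshold $n = \dim V$ is precisely correct: the coproduct of $\gamma$ distributes exactly $n$ odd derivatives between $F$ and $G$, and no further odd differentiation is needed to account for the Berezin integration. The remaining work is the careful Koszul sign accounting from the super Leibniz rule and from the inversion morphism $i^*$; these signs do not affect the $L^1$-estimates.
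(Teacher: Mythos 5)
Your overall strategy coincides with the paper's: expand the components of $F*G$ via the coproduct $\Delta(\gamma)=\sum_{I}\pm\, a_I\otimes a_{I^c}$ into scalar convolutions on $\RR$ and apply the Young inequality $\|f*g\|_1\le\|f\|_1\|g\|_1$ termwise. But your treatment of the central powers of $z$ contains a genuine error that conceals the actual content of the proof. The element $z$ does not act by translation: by definition $zf=-f'$, so $L_{z^k}$ is (up to sign) the $k$-th derivative, and the excess powers $z^{\#(I^c\cap J)}$ produced by contracting $a_{I^c}a_J$ in $\U(\hc)$ are additional derivatives that must be paid for out of the Sobolev norm; they are not \emph{absorbed into the convolution variable}. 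Concretely, writing $I=A\cup C$ and $J=B\cup C$ with $A,B,C$ pairwise disjoint and $b=\#B$, the component $(F*G)(z^ja_J)$ produces terms $\bigl(F(a_{A\cup C})*G(a_{(A\cup B)^c})\bigr)^{(j+b)}$, and the component $G(a_{(A\cup B)^c})$ is controlled by $\|G\|_{n,1}$ only up to order $a+b$; whenever $j>a$ the naive estimate obtained by leaving all $j+b$ derivatives on the $G$-factor does not close. Already for $n=1$ the term $F(1)*G(a_1)'$ appears in $(F*G)(z)$, and $\|G(a_1)'\|_1$ is not dominated by $\|G\|_{1,1}$.

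The missing step --- and the heart of the paper's argument --- is the redistribution of derivatives across the convolution, $(f*g)^{(m)}=f^{(m_1)}*g^{(m_2)}$ for any $m_1+m_2=m$, together with the combinatorial check that $j+b$ can always be split as $m_1+m_2$ with $m_1\le n-a-c$ (the order available to $F(a_{A\cup C})$ in $\|F\|_{n,1}$) and $m_2\le a+b$ (the order available to $G(a_{(A\cup B)^c})$ in $\|G\|_{n,1}$); this is possible precisely because $j\le n-b-c$ forces $j+b\le n-c\le(n-a-c)+(a+b)$, and it is here that the threshold $k=n$ enters. Your remark that ``no further odd differentiation is needed'' is true but beside the point: it is the even (that is, $z$-) differentiation that must be accounted for, and your proposal as written does not do so.
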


We view the present set of results as a first step towards a systematic harmonic analysis on abelian Lie supergroups. We expect such a theory to have immediate applications to linear differential equations on superspaces. Moreover, it will be an important tool in a non-abelian harmonic analysis of homogeneous superspaces which is just evolving (see~\cites{AHZ:10,AlHi:10,Alld:10}).

\medskip\noindent\emph{Acknowledgements.} 
The first named author was funded by the Leibniz independent junior research group grant, and the SFB/Transregio 12 grant, both provided by Deutsche Forschungsgemeinschaft (DFG).

\section{Preliminaries} \label{sct:Prelim}

In this section we provide the basic definitions necessary for our construction. The material in this section is mostly known, so we will omit proofs  wherever possible. As general references, we mention \cites{Leites, DeligneMorgan}.


If $V = V_0 \oplus V_1$ is a super vector space, we write $|v|$ for the \emph{parity} of a homogeneous element $v \in V.$ If $m = \dim V_0$ and $n = \dim V_1,$ we say that $V$ is of \emph{graded dimension} $(m,n).$ If $W$ is another super vector space, we equip $V \otimes W$ with a grading such that $|v \otimes w| \equiv |v|+|w|(2).$ The space of all linear maps from $V$ to $W$ is denoted $\underline{\Hom}(V,W),$ and has grading defined in such a way that $|\phi(v)|\equiv|\phi|+|v|(2)$ for $\phi \in \underline{\Hom}(V,W), \, v \in V.$ We let $\Hom(V,W)$ denote the subspace of even linear maps. A bilinear form $\beta$ on $V$ is \emph{even} if $|u|+|v|=1$ implies $\beta(u,v)=0,$ and a non-degenerate even bilinear form $\beta$ is \emph{symplectic} if
\[
 \beta(u,v) = -(-1)^{|u||v|} \beta(v,u)
\]
for all homogeneous $u,v \in V.$ 

\begin{definition}
Given a finite-dimensional super vector space $V$ over $\RR$ together with a symplectic form $\beta$ on $V,$ we define the \emph{Heisenberg--Clifford Lie superalgebra} by 
\[
\hc(V,\beta)= V \oplus \RR,
\] 
with grading $\hc(V, \beta)_0 = V_0 \oplus \RR$ and $\hc(V,\beta)_1 = V_1$ and elements $u+x$ with $u \in V$ and $x \in \RR.$ We denote by $z$ the central element $0+1.$ The bracket is given by
\begin{equation}
[u+\lambda z, v+\mu z] = 2\beta(u,v)z, \; u,v \in V, \; \lambda, \mu \in \RR,
\end{equation}
and the one-dimensional center $\z(\hc(V,\beta))$ of $\hc(V,\beta)$ is spanned by $z.$
\end{definition}

\begin{remark}
Throughout this article, we will assume that $V$ is purely odd super vector space of graded dimension $(0,n).$ Then, $\beta$ is simply a non-degenerate symmetric bilinear form, and we assume that $\beta$ is positive definite. We will write $\hc$ or $\hc(V)$ for $\hc(V,\beta),$ and similarly $\z$ for $\z(\hc(V,\beta)),$ if no confusion is possible.
\end{remark}

\begin{remark}
Let $(a_i)_{i=1}^n$ be an orthonormal basis of $(V,\beta).$ Then the universal enveloping algebra $\U(\hc(V,\beta))$ is generated by the elements $a_i$ and $z \in \z(\hc(V,\beta)),$ subject to the relations 
\begin{equation}
a_i a_j = \begin{cases} - a_j a_i & \text{if} \quad i \neq j \\
z   & \text{if} \quad i=j \end{cases}
\end{equation}
and $z a_i = a_i z$ for all $i = 1, \ldots ,n.$
\end{remark}

As in the ungraded case, there is a \emph{symmetrization map} $\omega: S(\hc) \rightarrow \U(\hc).$ Here, $S(\hc) \cong \RR[z] \otimes \Lambda V$
is the symmetric algebra of the super vector space $\hc.$ The elements $a_i \in V$ pairwise anticommute, and therefore the map $\omega$ is simply given by $\omega(z^k \otimes (a_{i_1} \wedge \cdots \wedge a_{i_k})) = z^ka_{i_1} \cdots a_{i_k}.$
 
Given a natural number $n \geq 1,$ we let $\underline n = \{ 1, 2 , \ldots, n\}.$ If $(a_i)_{i=1}^n$ is a basis of $V,$ then the subsets of $\underline n$ parametrize a basis $(a_I)_{I \subset \underline n}$ of $\Lambda V$ in the usual way by 
\[
a_I = a_{i_1} \wedge \cdots \wedge a_{i_k},
\]
where $I = \{ i_1 < \cdots < i_k \}.$ We denote the images of $a_I$ under $\omega$ also by $a_I$. Then a Poincar\'e--Birkhoff--Witt basis of $\U(\hc)$ is given by $\{ z^k a_I \, | \, k \in \NN, \, I \subset \underline n \}.$

We introduce a special element of $\hc(V, \beta),$ which is up to a sign the \emph{chirality operator} in the theory of Clifford algebras.

\begin{definition} \label{def:gamma}
If $(a_i)_{i=1}^n$ is an orthonormal basis of $(V, \beta),$ we let 
\[
\gamma: = a_1 \cdots a_n = \omega(1\otimes (a_1 \wedge \cdots \wedge a_n)) \in \U(\hc(V, \beta)).
\]
Since the volume element $a_1 \wedge \cdots \wedge a_n \in \Lambda^nV$ only depends on the orientation of the orthonormal basis, the same is true for $\gamma.$
\end{definition}

Recall that if $\g$ is a Lie superalgebra, then $\U(\g)$ carries the structure of a super Hopf algebra (see {\it e.g.}~\cite[Section 3]{Kostant}). The \emph{coproduct} $\Delta$ and the \emph{antipode} $S$ will be used below to define a Lie supergroup corresponding to $\hc.$ They are uniquely determined by the following properties: The coproduct $\Delta: \U(\g) \rightarrow \U(\g) \otimes \U(\g)$ is an even unital algebra homomorphism which satisfies $\Delta(x) = x \otimes 1 + 1 \otimes x$ for $x \in \g.$ The antipode $S: \U(\g) \rightarrow \U(\g)$ is an even unital super-antiautomorphism, that is, $S(uv) = (-1)^{|u||v|}S(v)S(u),$ and on elements $x \in \g$ it is given by $S(x) = -x.$

\begin{remark} \label{rem:deltagamma} 
The element $\Delta(\gamma)$ will play an important role in this article. With respect to an orthonormal basis $(a_i)_{i=1}^n,$ it is given by 
\begin{equation}
\Delta(\gamma) = \prod_{i=1}^n (a_i \otimes 1 + 1 \otimes a_i) = \sum_{I \subset \underline n} a_I \otimes *a_I.
\end{equation}
Here $*a_I = \pm a_{I^c},$ where the sign is such that $a_I \cdot (*a_I) = \gamma$ in $\U(\hc),$ and $I^c = \underline n \setminus I.$ Concretely, if $I = (i_1 < \ldots < i_k),$ let $\sigma_I$ denote the permutation of $\underline n$ determined by $\sigma_I(j) = i_j$ for $1 \leq j \leq k$ and $\sigma_I(k+1) < \ldots < \sigma_I(n).$ Then 
\[
*a_I = \mathrm{sgn}(\sigma_I) a_{I^c}.
\]
Note that both $(a_I)_{I \subset \underline n}$ and $(*a_I)_{I \subset \underline n}$ form a bases of $\U(\hc)$ as a $\z$-module.
We will use Sweedler's notation 
\begin{equation} \label{eq:Sweedler}
\Delta(\gamma) = \sum_i \gamma_i^{(1)} \otimes \gamma_i^{(2)}.
\end{equation}
This, however, requires some care, since the $\gamma_i^{(j)}$ are not uniquely determined by equation~\eqref{eq:Sweedler}.
\end{remark}


In \cite{Koszul}, Koszul constructs a Lie supergroup associated with a Lie supergroup pair. We recall the definition of a Lie supergroup pair and the construction of the corresponding sheaf.

\begin{definition}
A \emph{Lie supergroup pair $G=(G_0,\g)$} consists of a Lie group $G_0,$ and a real Lie superalgebra $\g$ whose even part $\g_0$ is the Lie algebra of $G_0,$ and a smooth linear action $\Ad$ of $G_0$ on $\g$ by even linear automorphisms. We require that the action $\Ad$ extends the adjoint action of $G_0$ on $\g_0$ and that its differential $d\Ad: \g_0 \times \g \rightarrow \g$ is the restriction of the bracket $[\cdot, \cdot].$ The subalgebra $\g_0$ of $\g$ acts on $\U(\g)$ from the left, and if $U \subset G_0$ is open, then $\g_0$ acts on $C^\infty(U)$ by left invariant differential operators. Consider $C^\infty(U)$ as a purely even $\g_0$-module and define
\[
\C^\infty_G(U) := \underline{\Hom}_{\g_0}(\U(\g),C^\infty(U)).
\]
If $F \in \C^\infty_G(U)$ we write $F(u;x)$ for $F(u)(x)$ if $u \in \U(\g)$ and $x \in U.$
\end{definition}

\begin{definition}
We define the Heisenberg--Clifford Lie supergroup pair by $HC = (\RR, \hc),$ where the action $\Ad$ of $\RR$ on $\hc$ is the trivial action. If $U \subset \RR$ is open, we let $z \in \z$ act on $C^\infty(U)$ by $zf = -f'.$
\end{definition}

\begin{proposition} \label{prop:SheafAlg}
\begin{enumerate}[a)]
\item Let $U \subset \RR$ be open, and denote by $\mu$ the pointwise multiplication of functions in $C^\infty(U).$ The assignment $U \mapsto \C^\infty_{\HC}(U)$ is a sheaf of supercommutative unital superalgebras on $\RR,$ if the algebra multiplication is defined by 
\[
F \cdot G := \mu \circ (F \otimes G) \circ \Delta.
\]
The pair $(\RR, \C^\infty_{\HC})$ is a supermanifold.
\item  As a superalgebra, $\C^\infty_{\HC}(U)$ is isomorphic to 
\[
\underline{\mathrm{Hom}}_\RR (\Lambda V, C^\infty(U)) \cong C^\infty(U) \otimes \Lambda V^*,
\]
where the algebra structure on the right hand side is the obvious one. The isomorphism is given by $F \mapsto (F \circ \omega)|_{\Lambda V},$ where $\omega$ is the symmetrization map.
\end{enumerate}
\end{proposition}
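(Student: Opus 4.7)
The plan is to deduce (a) from general functorial properties of the $\underline{\Hom}_{\g_0}(\U(\g),-)$ functor together with the super Hopf algebra structure on $\U(\hc)$, and to derive (b) from the Poincar\'e--Birkhoff--Witt decomposition combined with the $\g_0$-equivariance built into the definition of sections.

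For (a), I would first verify the sheaf property. The restriction $\C^\infty_{\HC}(U) \to \C^\infty_{\HC}(W)$ for $W \subset U$ is defined by $F \mapsto (u \mapsto F(u)|_W)$, and remains $\g_0$-equivariant because $\g_0$ acts by left invariant differential operators, which commute with restriction to open subsets. The locality and gluing axioms then reduce to those of the sheaf $C^\infty$, applied componentwise along a PBW basis. For the algebra structure $F \cdot G = \mu \circ (F \otimes G) \circ \Delta$, associativity and existence of a unit (given by $u \mapsto \varepsilon(u) \cdot 1$, with $\varepsilon$ the counit) follow from the Hopf axioms; supercommutativity follows from the super-cocommutativity $\tau \circ \Delta = \Delta$ of $\U(\hc)$, standard for enveloping algebras of Lie superalgebras, together with commutativity of $C^\infty(U)$. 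A brief computation using $\Delta(zu) = (z \otimes 1)\Delta(u) + (1 \otimes z)\Delta(u)$ for $z \in \g_0$ and the Leibniz rule for the action of $z$ on a pointwise product shows that $F \cdot G$ remains $\g_0$-equivariant.

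For (b), the PBW basis $\{z^k a_I\}_{k \in \NN,\, I \subset \underline n}$ gives an isomorphism of left $\U(\z)$-modules $\U(\hc) \cong \U(\z) \otimes_\RR \Lambda V$, corresponding under $\omega$ to $S(\hc) \cong \RR[z] \otimes \Lambda V$. Since $z \in \g_0$ acts on $C^\infty(U)$ by $-\partial_x$, $\g_0$-equivariance forces $F(z^k a_I;x) = (-1)^k \partial_x^k F(a_I;x)$, so $F$ is determined by its restriction to $\Lambda V$, and conversely any $\RR$-linear map $\Lambda V \to C^\infty(U)$ extends uniquely to a $\g_0$-equivariant map. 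This yields the claimed linear isomorphism, and finite-dimensionality of $\Lambda V$ produces the identification with $C^\infty(U) \otimes \Lambda V^*$.

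The main obstacle is verifying that this linear isomorphism intertwines the two algebra structures. The key input is that $\omega \colon S(\hc) \to \U(\hc)$ is a morphism of super coalgebras, though not of algebras, so $\Delta \circ \omega = (\omega \otimes \omega) \circ \Delta_S$ where $\Delta_S$ is the primitively generated coproduct on $S(\hc)$. Restricted to $\Lambda V$, $\Delta_S$ coincides with the coproduct of the exterior Hopf algebra, whose transpose to $\Lambda V^*$ is the standard exterior product with Koszul signs. Transporting $F \cdot G = \mu \circ (F \otimes G) \circ \Delta$ through $\omega$ and restricting to $\Lambda V$ therefore produces exactly the obvious product on $C^\infty(U) \otimes \Lambda V^*$. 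The supermanifold assertion in (a) is then immediate: locally $\C^\infty_{\HC}$ is the standard model $C^\infty \otimes \Lambda V^*$ for a supermanifold of dimension $(1,n)$ with body $\RR$.
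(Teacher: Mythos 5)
Your proposal is correct, and it supplies in full the argument that the paper deliberately omits (the proposition is part of the ``known material'' for which the authors defer to Koszul's construction of the supergroup from a supergroup pair and to the standard references). Your route --- convolution product from the Hopf structure, supercommutativity from super-cocommutativity of $\Delta$, $\g_0$-equivariance via the Leibniz rule, and the algebra isomorphism in (b) from the fact that $\omega$ is a coalgebra (not algebra) morphism restricting on $\Lambda V$ to the exterior coproduct --- is precisely the standard one underlying the paper's construction, so there is nothing to add.
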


\begin{remark}
If $(a_i)_{i=1}^n$ is a basis of $V,$ let $(\xi^i)_{i=1}^n$ denote the dual basis of $V^*.$ We use superscripts $\xi^I = \xi^{i_1} \wedge \ldots \wedge \xi^{i_k}$ for the elements of $\Lambda V^*,$ since this has become standard in the literature on supermanifolds. Due to the simple form of the symmetrization map $\omega,$ the isomorphism in Proposition~\ref{prop:SheafAlg} b) is given by 
\[
F \mapsto \sum_{I \subset \underline n} f_I \otimes \xi^I,
\]
where $f_I := F(a_I).$ This coordinate-dependent notation for smooth functions is quite common in the literature. However, we will avoid using coordinates as far as possible.
\end{remark}

We recall that a \emph{Lie supergroup} is a supermanifold $(G, \C^\infty_G)$ together with morphisms $m=(m_0,m^*), i=(i_0,i^*)$ and $e: * \rightarrow (G, \C^\infty_G),$ satisfying the usual group axioms (here, $*$ is the $(0,0)$-dimensional supermanifold).

\begin{proposition}
The supermanifold $(\RR, \C^\infty_{\HC})$ is a Lie supergroup with multiplication
\[
m= (m_0, m^*), \quad m_0(x,y) := x+y,  \quad (m^*F)(u\otimes v; x, y) := F(uv; x+y),
\]
inversion 
\[
i=(i_0, i^*), \quad i_0(x):=-x, \quad (i^*F)(u;x) := F(S(u);-x),
\]
and identity element $e=(e_0,e^*)$ given by $e_0(*):=0$ and $e^*F := F(1;0).$
\end{proposition}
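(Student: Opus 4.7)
The plan is to split the proof into two stages. Stage one verifies that the formulas for $m^*$, $i^*$, and $e^*$ define morphisms of supermanifolds, i.e., are superalgebra homomorphisms compatible with the sheaf structure and the Koszul $\hc_0$-equivariance. Stage two checks the three group axioms---associativity, unit, and inverse---each of which translates into a Hopf-algebra identity in $\U(\hc)$ combined with the corresponding identity in the additive group $(\RR, +)$. This reduction to Hopf-algebra bookkeeping is the governing idea, and it is made possible by the fact that the product in $\C^\infty_{\HC}$ was defined in Proposition~\ref{prop:SheafAlg} through the coproduct $\Delta$.

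For stage one, the key point for $m^*$ is to check that $(u \otimes v, (x, y)) \mapsto F(uv; x+y)$ is smooth in $(x, y)$, linear in $u \otimes v$, and $(\hc_0 \oplus \hc_0)$-equivariant in the sense of the Koszul construction; the latter follows from centrality of $z$ in $\U(\hc)$ together with the fact that $z$ acts as $-\partial_x$ on $C^\infty(U)$. That $m^*$ is an algebra homomorphism reduces to $\Delta(uv) = \Delta(u)\Delta(v)$, i.e., to the fact that $\Delta$ is itself an algebra map on $\U(\hc)$. The analogous verifications for $i^*$ rest on $S$ being a super-antiautomorphism with $S(z) = -z$, while those for $e^*$ are immediate.

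For stage two, the associativity axiom follows from a direct calculation: both $(m^* \otimes \id) \circ m^* F$ and $(\id \otimes m^*) \circ m^* F$, evaluated at $(u \otimes v \otimes w; x, y, t)$, yield $F(uvw; x+y+t)$. The unit axiom reduces to the immediate $(m^*F)(1 \otimes u; 0, x) = F(u; x)$ and its symmetric counterpart. The inverse axiom is the most delicate; in pullback form, it asserts that $\delta^* \circ (\id \otimes i^*) \circ m^* F$ is the constant function $F(1;0)$, where $\delta \colon \HC \to \HC \times \HC$ is the diagonal whose pullback coincides with the algebra product $F_1 \cdot F_2 = \mu \circ (F_1 \otimes F_2) \circ \Delta$ of Proposition~\ref{prop:SheafAlg}. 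Unwinding: $(\id \otimes i^*)(m^*F)$ sends $(u \otimes v; x, y)$ to $F(u S(v); x-y)$, and after applying $\delta^*$ and using $x - x = 0$, the verification reduces to the antipode identity $\mu \circ (\id \otimes S) \circ \Delta = \eta \circ \epsilon$ in the Hopf algebra $\U(\hc)$. The main obstacle is not conceptual but organizational: it lies in consistently identifying $\C^\infty_{\HC \times \HC}$ with an appropriate tensor product of $\C^\infty_{\HC}$ and translating each supermanifold pullback into the corresponding Hopf-algebra operation.
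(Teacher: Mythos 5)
The paper offers no proof of this proposition: it is part of the ``mostly known'' material of Section~2, deferred to Koszul's construction of the Lie supergroup attached to a supergroup pair. Your proposal supplies exactly the standard argument that the authors are implicitly invoking, and it is correct: the well-definedness of $m^*$, $i^*$, $e^*$ as sheaf morphisms reduces to the $\z$-equivariance checks you describe (centrality of $z$ and $S(z)=-z$ are indeed the operative facts), and the three group axioms reduce, respectively, to associativity of $\U(\hc)$ together with associativity of $(\RR,+)$, to $1\cdot u = u$, and to the antipode identity $\mu\circ(\id\otimes S)\circ\Delta = \eta\circ\epsilon$, with the diagonal pullback correctly identified with the algebra product $\mu\circ(F_1\otimes F_2)\circ\Delta$ of Proposition~\ref{prop:SheafAlg}.

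One point worth tightening: you attribute the multiplicativity of $i^*$ solely to $S$ being a super-antiautomorphism with $S(z)=-z$. The antiautomorphism property alone gives $\Delta\circ S = (S\otimes S)\circ\tau\circ\Delta$ (with $\tau$ the super flip), so to conclude $i^*(F\cdot G)=i^*F\cdot i^*G$ you must additionally use either the supercocommutativity of $\U(\hc)$ or, equivalently, the supercommutativity of $\C^\infty_{\HC}$ to absorb the flip. This is a one-line fix, but as written the stage-one verification for $i^*$ is incomplete. A second, purely organizational caveat you already flag yourself: the identification $\C^\infty_{\HC\times\HC}\cong\underline{\Hom}_{\g_0\oplus\g_0}(\U(\hc)\otimes\U(\hc), C^\infty(\RR^2))$ should be stated once explicitly, since all of your Hopf-algebraic translations pass through it.
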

\begin{definition}
We denote the algebra of global sections by $\C^\infty(\HC):= \C^\infty_{\HC}(\RR)$ and refer to elements of $\C^\infty(\HC)$ as \emph{smooth functions on $\HC$}.

The \emph{left regular action} of $\HC$ on $\C^\infty(\HC)$ is given by 
\[
(L_xF)(u;y) := F(u;y-x), \quad (L_uF)(v,y) := (-1)^{|u||F|}F(S(u)v;y)
\]
for $x \in \RR$ and $u \in \U(\hc).$ This defines linear maps $L_x, L_u: \C^\infty(\HC) \rightarrow \C^\infty(\HC)$ of parity $|L_x|=0$ for $x \in \RR$ and $|L_u|=|u|$ for $u \in \U(\hc).$ We write $L_{u;x}$ for $L_u \circ L_x = L_x \circ L_u.$
\end{definition}

\begin{lemma}
\begin{enumerate}[a)]
\item The assignments $x\mapsto L_x$ and $u \mapsto L_u$ define representations of $\RR$ and $\U(\hc)$ on the vector space $\C^\infty(\HC).$

\item If $v \in V,$ then $L_v$ is a super-derivation on $\C^\infty(\HC),$ that is, $L_v(F \cdot G) = L_vF \cdot G + (-1)^{|v||F|}F \cdot L_vG.$
\end{enumerate}
\end{lemma}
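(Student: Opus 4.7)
The plan is to verify all three statements by direct unwinding of the definitions, using the Hopf-algebraic properties of $\U(\hc)$. The arguments are formal; the only genuine work is sign-tracking in the super-tensor-product convention $(F \otimes G)(x \otimes y) = (-1)^{|G||x|} F(x) G(y)$.

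For the $\RR$-part of (a), I would simply compute $(L_x L_y F)(u; z) = (L_y F)(u; z - x) = F(u; z - x - y)$, which equals $(L_{x+y} F)(u; z)$; and $L_0 = \id$ is immediate. For the $\U(\hc)$-part, I would expand $(L_u L_v F)(w; y)$ using $|L_v F| = |v| + |F|$ to obtain a scalar $(-1)^{|u|(|v|+|F|) + |v||F|}$ multiplying $F(S(v) S(u) w; y)$, then rewrite this using the super-antiautomorphism identity $S(uv) = (-1)^{|u||v|} S(v) S(u)$ and check that the resulting sign matches the direct expansion of $L_{uv}$, namely $(-1)^{(|u|+|v|)|F| + |u||v|}$.

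For part (b), I would start from
\[
L_v(F \cdot G)(u; y) = (-1)^{|v|(|F|+|G|)} (F \cdot G)(S(v) u; y)
\]
and expand $(F \cdot G)(S(v) u; y)$ via $F \cdot G = \mu \circ (F \otimes G) \circ \Delta$. The decisive step is that, because $\Delta$ is an algebra homomorphism and $S(v) \in \hc$ is primitive,
\[
\Delta(S(v) u) = \sum_i \bigl( S(v) u_i^{(1)} \otimes u_i^{(2)} + (-1)^{|v||u_i^{(1)}|} u_i^{(1)} \otimes S(v) u_i^{(2)} \bigr),
\]
the sign $(-1)^{|v||u_i^{(1)}|}$ arising from moving $S(v)$ past $u_i^{(1)}$ in the graded tensor product. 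Converting the $F(S(v)\,\cdot\,)$ and $G(S(v)\,\cdot\,)$ factors back into $L_v F$ and $L_v G$ contributes additional signs $(-1)^{|v||F|}$ and $(-1)^{|v||G|}$, which, together with the outer $(-1)^{|v|(|F|+|G|)}$, collapse so that the first summand becomes exactly $(L_v F \cdot G)(u; y)$ and the second becomes $(-1)^{|v||F|} (F \cdot L_v G)(u; y)$.

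The main obstacle is purely bookkeeping: keeping the three different sign sources (the antipode, the tensor flip, and the parity shift induced by $L_v$ on its argument) consistent throughout. Once the conventions are fixed, no cases or further ideas are needed and the super-Leibniz rule drops out.
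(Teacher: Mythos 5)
Your argument is correct: the sign bookkeeping in both parts checks out (in part (a) both $L_uL_vF$ and $L_{uv}F$ carry the sign $(-1)^{|u||v|+(|u|+|v|)|F|}$, and in part (b) the outer factor $(-1)^{|v|(|F|+|G|)}$ cancels against the signs coming from the tensor-evaluation convention and from converting $F(S(v)\,\cdot\,)$ and $G(S(v)\,\cdot\,)$ back into $L_vF$ and $L_vG$, exactly as you claim). The paper omits the proof of this lemma as known material, so there is nothing to compare against; your direct unwinding of the definitions, using the primitivity of $S(v)$ and the super-anti-automorphism property of $S$, is the standard argument one would write.
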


\begin{definition}
By Proposition~\ref{prop:SheafAlg}, $\C^\infty(\HC)$ is isomorphic as a superalgebra to $C^\infty(\RR) \otimes \Lambda V^*.$ Since $C^\infty(\RR)$ carries a nuclear Fr\'echet topology, this tensor product also carries a nuclear Fr\'echet topology. For each compact $K \subset \RR$ and $u \in \U(\hc)$ we define a seminorm on $\C^\infty(\HC)$ by 
\[
p_{K,u}(F):= \max_{x \in K} |(L_uF)(1;x)|
\]
Given a basis $(a_i)_{i=1}^n$ of $V$ and a countable exhaustion $\{K_j\}_{j \in J}$ of $\RR$ by compact sets, the Fr\'echet topology on $\C^\infty(\HC)$ can be defined by the countable family $\{ p_{K_j, z^ka_I} \}_{j,k,I}$ of seminorms.
\end{definition}

We define vector valued and compactly supported functions as well as functions of Schwartz class. 

\begin{definition} \label{def:SchwartzSpace}
If $K \subset \RR$ is compact, we let
\[
\C_K^\infty(\HC) := \{ F \in \C^\infty(\HC) \;|\;  (\forall \, u \in \U(\hc)) \,:\, \supp \, F(u) \subset K \}
\]
be the space of smooth functions with support contained in $K,$ and we give $\C^\infty_K(\HC)$ the topology defined by the seminorms $p_u(F) = \max_{x \in K} |L_uF(1;x)|.$ Then the union 
\[
\C_c^\infty(\HC):= \cup_i \C^\infty_{K_i}(\HC),
\]
where $\{K_i\}$ is a countable exhaustion of $\RR$ by compact sets, is the \emph{space of compactly supported smooth functions on $\HC,$} which is a countable strict inductive limit of Fr\'echet spaces, or an LF space. 

If $W$ is a finite-dimensional super vector space over $\RR$ or $\CC,$ we define the vector space of \emph{smooth $W$-valued functions on $\HC$} by $\C^\infty(\HC,W):= (\C^\infty(\HC) \otimes W)_0.$ 

Lastly, the \emph{Schwartz space} $\mathcal S(\HC)$ of rapidly decreasing functions is defined to be the space of $F \in \C^\infty(\HC)$ for which
\[
s_{j,u}(F) := \sup_{x \in \RR} |x^j (L_uF)(1;x)| < \infty
\] 
for all $j \in \NN$ and $u \in \U(\hc).$
\end{definition}

\begin{remark}
\begin{enumerate}[a)]
\item  The space $\mathcal S(\HC)$ is simply the subspace of $\C^\infty(\HC)$ of all $F$ which satisfy $F(u) \in \mathcal S(\RR)$ for all $u \in \U(\hc).$ 
\item  The spaces of functions we have defined so far are isomorphic as vector spaces to $C^\infty(\RR) \otimes W,\,  C_c^\infty(\RR) \otimes W$ and $\mathcal S(\RR) \otimes W,$ respectively, where $W = \Lambda V^*$ is finite-dimensional. Therefore, there is only one reasonable tensor product topology, and we will use this topology throughout.
\end{enumerate}
\end{remark}

\begin{lemma}
The linear maps $L_{u;x}$ are continuous on $\C^\infty(\HC), \,\C_c^\infty(\HC)$ and  $\mathcal S(\HC).$
\end{lemma}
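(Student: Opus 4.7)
My plan is to treat the two operator families $L_x$ and $L_u$ separately and then compose via $L_{u;x} = L_u \circ L_x$. The key algebraic ingredient is the operator identity $L_v \circ L_u = L_{vu}$ on $\C^\infty(\HC)$; a short computation using $|L_uF|=|u|+|F|$ and the anti-multiplicativity $S(vu) = (-1)^{|v||u|}S(u)S(v)$ of the antipode gives
\[
(L_vL_uF)(w;y) = (-1)^{(|v|+|u|)|F|}F(S(vu)w;y) = (L_{vu}F)(w;y).
\]
Setting $w=1$ yields $|(L_uF)(1;y)| = |F(S(u);y)|$, so the action of $L_u$ simply relabels a defining seminorm on each space: $p_{K,v}(L_uF) = p_{K,vu}(F)$ on $\C^\infty(\HC)$, $p_v(L_uF) = p_{vu}(F)$ on $\C^\infty_K(\HC)$, and $s_{j,v}(L_uF) = s_{j,vu}(F)$ on $\mathcal S(\HC)$. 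Since $L_u$ acts only on the $\U(\hc)$-argument, it preserves supports and hence restricts to a continuous endomorphism of each $\C^\infty_K(\HC)$, extending to one on $\C^\infty_c(\HC)$.

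For $L_x$, the definition $(L_xF)(u;y) = F(u;y-x)$ immediately yields $p_{K,u}(L_xF) = p_{K-x,u}(F)$, so $L_x$ is continuous on $\C^\infty(\HC)$ because $K-x$ is compact. On $\C^\infty_K(\HC)$, $L_x$ shifts supports to $K+x$ and thus maps $\C^\infty_K(\HC)$ continuously into $\C^\infty_{K+x}(\HC)$; the universal property of the strict inductive limit then promotes this to a continuous map $\C^\infty_c(\HC) \to \C^\infty_c(\HC)$. For the Schwartz seminorms, the substitution $z = y-x$ together with the binomial expansion $(z+x)^j = \sum_{k=0}^j \binom{j}{k} x^{j-k} z^k$ gives
\[
s_{j,u}(L_xF) = \sup_{z\in\RR}|(z+x)^j F(S(u);z)| \leq \sum_{k=0}^j \binom{j}{k}|x|^{j-k} s_{k,u}(F),
\]
establishing continuity on $\mathcal S(\HC)$.

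Composing, $L_{u;x} = L_u \circ L_x$ is continuous on all three spaces. The argument is essentially routine seminorm manipulation once the identity $L_vL_u = L_{vu}$ is in hand; the main point requiring care is the inductive-limit step for $\C^\infty_c(\HC)$, where translation does shift between different strata $\C^\infty_K(\HC)$ of the inductive system, and secondarily the parity signs from $(-1)^{|u||F|}$ and $S$, which only contribute factors of $\pm 1$ and hence do not affect any absolute-value estimate.
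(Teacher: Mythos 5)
Your proof is correct, but it is organized differently from the paper's. The paper's proof is a one-line reduction: choose coordinates via $\C^\infty(\HC)\cong C^\infty(\RR)\otimes\Lambda V^*$, observe that each $L_{u;x}$ becomes a matrix of translations and derivatives acting on the components $f_I$, and invoke the (trivial) continuity of $f\mapsto f'$ on $C^\infty(\RR)$, $C^\infty_c(\RR)$ and $\mathcal S(\RR)$. You instead argue intrinsically: the identity $L_vL_u=L_{vu}$ shows that $L_u$ merely relabels the defining seminorms ($p_{K,v}\circ L_u=p_{K,vu}$, etc.), while $L_x$ is handled by a translation/binomial estimate and the universal property of the strict inductive limit for $\C^\infty_c(\HC)$. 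Both arguments are sound; yours is coordinate-free and makes the inductive-limit step for $\C^\infty_c(\HC)$ explicit (which the paper glosses over), at the cost of being longer. One point worth a half-sentence in your version: the topology on $\C^\infty(\HC)$ is generated by the seminorms $p_{K_j,z^ka_I}$ indexed by PBW basis elements, whereas your relabeling produces $p_{K,vu}$ for a general product $vu\in\U(\hc)$; you should note that $p_{K,vu}$ is dominated by a finite linear combination of basis seminorms by expanding $vu$ in the PBW basis and using linearity of $w\mapsto L_wF$. This is immediate, but it is the step that closes the loop between your relabeled seminorms and the countable generating family the paper actually uses.
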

\begin{proof}
After choosing coordinates, the proof reduces to showing that the derivative $f \mapsto f'$ is continuous on $C^\infty(\RR), \, C^\infty_c(\RR)$ and $\mathcal S(\RR),$ which is trivial by definition.
\end{proof}

\subsection*{The Invariant Integral}

\begin{definition} \label{def:InvInt}
Let $W$ be a finite-dimensional super vector space.
\begin{enumerate}[a)]
\item If $F \in \C^\infty_c(\HC,W),$ we define the \emph{integral of $F$ over $\HC$} as
\[
\int_{\HC} F := \int_\RR F(\gamma;x) \, \mathrm{d}x,
\]
where $\gamma$ is defined in~\ref{def:gamma}.
\item If $F \in \C^\infty(\HC)$ and $G \in \C^\infty(\HC,W)$ are  such that $F\cdot G$ has compact support, we let
\[
\langle F,G \rangle := \int_{\HC} F\cdot G.
\]
\end{enumerate}
\end{definition}

\begin{remark} \label{rem:ParityPairing}
\begin{enumerate}[a)]
\item The integral and the pairing have parity $|\gamma|,$ that is, if $|F|+|\gamma| \equiv 1 (2),$ then $\int_{\HC} F= 0,$ and if $|F|+|G|+|\gamma| \equiv 1 (2) ,$ then $ \langle F,G \rangle =0.$
\item The product in $\C^\infty(\HC)$ is supercommutative, and therefore
\[
\langle F,G \rangle = (-1)^{|F||G|} \langle G,F \rangle.
\]
\end{enumerate}
\end{remark}

\begin{lemma}
The integral is left invariant in the sense that
\[
\int_{\HC} L_xF = \int_{\HC} F \quad \text{and} \quad \int_{\HC} L_uF = 0
\]
for all $x \in \RR$ and all $u \in \U(\hc).$ The pairing $\langle \cdot, \cdot \rangle $ is invariant in the sense that
\begin{equation} \label{eq:Invariance}
\langle L_{u;x}F,G \rangle = (-1)^{|F||u|} \langle F,L_{S(u);-x}G \rangle 
\end{equation}
for $x \in \RR$ and $u \in \U(\hc).$
\end{lemma}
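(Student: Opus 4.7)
The proof splits naturally into three pieces. First, $\int_\HC L_xF = \int_\HC F$ is immediate from translation invariance of Lebesgue measure: by definition $\int_\HC L_xF = \int_\RR F(\gamma;y-x)\,dy$, and the substitution $y\mapsto y+x$ recovers $\int_\HC F$.

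For $\int_\HC L_uF = 0$, I would reduce by linearity and the PBW basis to the case $u = z^k a_I$ with $k+|I|\geq 1$ (the literal case $u=1$ must of course be excluded, since $L_1F = F$). The plan is to compute $S(u)\gamma$ explicitly and exploit the fact that $F$ is $\g_0$-linear, so that $z$ acts on $F(\,\cdot\,)$ as $-\partial_x$. A short calculation using $S(z)=-z$, $S(a_i)=-a_i$ and the super-antihomomorphism property yields $S(z^ka_I) = (-1)^{k+|I|}z^ka_I$. Moving each $a_{i_j}$ into $\gamma = a_1\cdots a_n$ and collapsing with $a_i^2 = z$ gives $a_I\gamma = \epsilon_I\, z^{|I|} a_{I^c}$ for some sign $\epsilon_I\in\{\pm 1\}$. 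Combining these, $(L_uF)(\gamma;x) = \pm\,\partial_x^{k+|I|}F(a_{I^c};x)$, whose integral over $\RR$ vanishes because $F(a_{I^c};\cdot)\in C^\infty_c(\RR)$ and $k+|I|\geq 1$.

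For the invariance of the pairing I would write $L_{u;x} = L_u\circ L_x$ and handle the two factors separately. The translation part uses $L_x(F\cdot G) = L_xF\cdot L_xG$ (immediate from the Sweedler expansion of the product), which gives $\langle L_xF,G\rangle = \int_\HC L_x(F\cdot L_{-x}G) = \langle F, L_{-x}G\rangle$ by the first identity. For the $\U(\hc)$-part, I would first settle the case $u\in\hc$: the super-derivation property for $u\in V$ is the preceding lemma, and for $u=z$ it reduces to the ordinary Leibniz rule since $L_z = \partial_x$. In either case,
\[
L_u(F\cdot G) = L_uF\cdot G + (-1)^{|u||F|}F\cdot L_uG,
\]
and integrating, using $\int_\HC L_u(\,\cdot\,)=0$ on the left and $S(u)=-u$ on the right, produces the claim. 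General $u\in\U(\hc)$ then follow by induction on the PBW degree, aided by the easily checked identity $L_{uv} = L_uL_v$ together with $S(uv) = (-1)^{|u||v|}S(v)S(u)$.

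The main obstacle is sign bookkeeping: carefully verifying the key identity $a_I\gamma = \epsilon_I z^{|I|}a_{I^c}$ and shepherding the Koszul signs through the inductive step of the pairing identity. Neither step is conceptually deep, but both require patience with the order-reversal induced by $S$ and with the parities appearing in the coproduct.
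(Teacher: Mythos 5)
Your proposal is correct and follows essentially the same route as the paper: translation invariance of Lebesgue measure for $L_x$, the identity $a_i\gamma=\pm z\,a_1\cdots\hat a_i\cdots a_n$ to reduce $\int_{\HC}L_uF$ to the integral of a derivative of a compactly supported function, and then the super-derivation property of $L_v$ for $v\in\hc$ combined with $L_{uv}=L_uL_v$ and the antipode's anti-homomorphism property for the pairing. Your remark that $u=1$ (more precisely, any $u$ with nonzero constant term) must be excluded from the second identity is a legitimate observation that the paper leaves implicit.
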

\begin{proof}
Invariance under $L_x, x \in \RR$ follows from translation invariance of the Lebesgue measure, since $L_xF(\gamma;y) = F(\gamma;y-x).$ In order to check invariance under $\U(\hc),$ choose an orthonormal basis $(a_i)_{i=1}^n$ of $V.$ It then suffices to show that $\int L_{a_i}F = 0$ for $1 \leq i \leq n$ and $\int L_zF = 0.$ We compute
\begin{align*}
\int_{\HC} L_{a_i} F &= \pm \int_\RR F(a_i \gamma;x) \,\mathrm{d}x \\
 &= \pm \int_\RR F(z a_1 \dots \hat a_i \dots a_n;x) \,\mathrm{d}x \\
 &= \pm \int_\RR F(a_1 \dots \hat a_i \dots a_n )'(x) \,\mathrm{d}x 
\end{align*}
which is zero because $F(a_1 \dots \hat a_i \dots a_n)$ is compactly supported. For the same reason, $\int L_zF = \int_{\RR} F(\gamma)'(x) \, \mathrm{d} x = 0.$ If $v \in \hc,$ we have $\int_{\HC} L_v(F \cdot G) = 0,$ and because $L_v$ is a super-derivation, this implies
\[
\langle L_vF,G \rangle = -(-1)^{|v||F|} \langle F,L_vG \rangle.
\]
Let $u,v \in \hc.$ Then $-L_u = L_{-u} = L_{S(u)},$ and it follows that
\begin{align*}
\langle L_{uv}F,G \rangle =(-1)^{|F|(|uv|)} \langle F, L_{S(uv)}G \rangle.
\end{align*}
This implies that equation~\eqref{eq:Invariance} holds for arbitrary $u \in \U(\hc)$ and $x \in \RR.$
\end{proof}

\begin{lemma} \label{lem:Pairing}
If $F,G \in \C^\infty_c(\HC),$ then
\[
\left| \int_{\HC} F \right| \leq \mathrm{vol}(\supp F) \cdot p_{\supp F, \gamma}(F)
\]
and
\[
p_{K,u}(F \cdot G) \leq \sum_i p_{K, u_i^{(1)}}(F) p_{K, u_i^{(2)}}(G)
\]
where $u \in \U(\hc)$ and $\Delta(u)= \sum_i u_i^{(1)} \otimes u_i^{(2)}.$

In particular, the integral $\int_{\HC}$ is a continuous linear functional, and the algebra multiplication on $\C^\infty(\HC)$ and the pairing $\langle \cdot, \cdot \rangle $ are continuous.
\end{lemma}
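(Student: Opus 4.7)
The first inequality is immediate from the definition. By Definition~\ref{def:InvInt}, $\int_{\HC} F = \int_\RR F(\gamma;x)\,\mathrm{d}x$, and the function $x \mapsto F(\gamma;x)$ is supported in $\supp F$ because, in coordinates, $F(\gamma)$ is (up to sign) the top coefficient of the expansion $F = \sum_I f_I \otimes \xi^I$ from Proposition~\ref{prop:SheafAlg}. Thus the integral is bounded by $\mathrm{vol}(\supp F) \cdot \max_{x\in\supp F}|F(\gamma;x)|$. To identify this supremum with $p_{\supp F,\gamma}(F)$, I will use that $(L_\gamma F)(1;x) = (-1)^{|\gamma||F|}F(S(\gamma);x)$ together with $S(\gamma) = \pm \gamma$ (since $S$ is a super-antiautomorphism vanishing signs on the $a_i$, this sign depends only on $n$); the absolute value absorbs the ambiguity.

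For the second inequality I need a Leibniz-type identity for $L_u$ acting on a product. Starting from the definitions,
\[
L_u(F\cdot G)(1;x) = (-1)^{|u|(|F|+|G|)}(F\cdot G)(S(u);x) = (-1)^{|u|(|F|+|G|)}\,\mu\bigl((F\otimes G)(\Delta S(u))\bigr)(x).
\]
Using the super Hopf algebra identity $\Delta\circ S = (S\otimes S)\circ \tau^s \circ \Delta$ on $\U(\hc)$, writing $\Delta(u) = \sum_i u_i^{(1)}\otimes u_i^{(2)}$, applying the Koszul sign rule to $F\otimes G$, and rewriting $F(S(u_i^{(j)});x) = \pm (L_{u_i^{(j)}}F)(1;x)$ and similarly for $G$, the whole expression collapses (after all signs are collected) to a sum
\[
L_u(F\cdot G)(1;x) = \sum_i (-1)^{\epsilon_i}\,(L_{u_i^{(2)}}F)(1;x)\cdot (L_{u_i^{(1)}}G)(1;x)
\]
for certain signs $\epsilon_i$. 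Since $\U(\hc)$ is supercocommutative, the Sweedler components may be relabelled (with Koszul signs that are irrelevant under absolute values) so that $F$ is paired with $u_i^{(1)}$ and $G$ with $u_i^{(2)}$. Taking absolute values and then the maximum over $x\in K$ yields the claimed bound.

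Finally, the continuity statements follow mechanically. The first bound shows that the restriction of $\int_{\HC}$ to each Fr\'echet subspace $\C^\infty_K(\HC)$ is dominated by $\mathrm{vol}(K)\cdot p_{K,\gamma}(\cdot)$, hence continuous; by the universal property of the strict inductive limit defining $\C^\infty_c(\HC)$, $\int_{\HC}$ is continuous on $\C^\infty_c(\HC)$. The second bound expresses the seminorms of $F\cdot G$ through a finite sum of products of seminorms of $F$ and $G$, which gives separate (hence, on the Fr\'echet space $\C^\infty(\HC)$, joint) continuity of multiplication. Continuity of the pairing $\langle F,G\rangle = \int_{\HC} F\cdot G$ then follows as a composition of the two, provided $F\cdot G$ has compact support. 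The main obstacle is purely notational: tracking the Koszul signs in the expansion of $\Delta S(u)$ cleanly enough to confirm the displayed inequality, and invoking cocommutativity correctly. No estimate beyond triangle inequalities is required.
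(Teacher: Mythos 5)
Your proof is correct, and it follows the route the paper clearly intends (the paper itself omits the proof of this lemma as routine): bound $\int_{\HC}F=\int_\RR F(\gamma;x)\,\mathrm{d}x$ by $\mathrm{vol}(\supp F)\cdot\sup|F(\gamma)|$ and identify the sup with $p_{\supp F,\gamma}(F)$ via $S(\gamma)=\pm\gamma$ (the sign ambiguity for inhomogeneous $F$ is harmless because only the parity-$|\gamma|$ component of $F$ contributes to $F(\gamma)$), then expand $(F\cdot G)(S(u))$ through $\Delta(S(u))$ and use the super-cocommutativity of $\U(\hc)$ to put the Sweedler legs in the stated order, with all Koszul signs killed by the absolute values. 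The only remark worth adding is that the displayed seminorm estimate already gives joint (not merely separate) continuity of the multiplication directly, so the appeal to the Fr\'echet-space upgrade from separate to joint continuity is not needed; and continuity of the pairing does require the observation you make, namely that $F\cdot G\in\C^\infty_K(\HC)$ whenever $G\in\C^\infty_K(\HC)$, which holds because each term $F(u_i^{(1)})G(u_i^{(2)})$ is supported in $K$.
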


Lastly, we define distributions and compactly supported distributions.

\begin{definition}
We define the spaces $\mathcal{D}'(\HC)$ of \emph{distributions on $\HC$} and $\mathcal{E}'(\HC)$ of \emph{compactly supported distributions on $\HC$} to be the topological dual spaces of $\C_c^\infty(\HC)$ and $\C^\infty(\HC),$ respectively. We introduce the \emph{duality pairing} $\langle \cdot, \cdot \rangle$ and write $\langle U, \Phi \rangle:= U(\Phi)$ if $U$ is a distribution and $\Phi$ is a smooth function.
\end{definition}

\begin{remark} \label{rem:Distributions}
a) By Lemma~\ref{lem:Pairing}, every element $F \in \C^\infty(\HC)$ defines a distribution via $\Phi \mapsto \langle F, \Phi \rangle,$ and the corresponding map $\C^\infty(\HC) \rightarrow \mathcal D'(\HC)$ is injective.  Similarly, there is an injection $\C^\infty_c(\HC) \rightarrow \mathcal E'(\HC).$

b) The spaces $\C^\infty_c(\HC)$ and $C^\infty_c(\RR) \otimes \Lambda V^*$ are isomorphic as algebras and as topological vector spaces. Hence, the topological dual $\mathcal D'(\HC)$ can be identified with $\mathcal D'(\RR) \otimes \Lambda V.$ For $U \in \mathcal D'(\HC)$ we denote $U(a_I) \in \mathcal D'(\RR)$ the distribution determined by $U( f \otimes \xi^I) = U(a_I)(f).$ Similarly, we define $U(a_I) \in \mathcal E'(\RR)$ if $U \in \mathcal E'(\HC).$
\end{remark}

\begin{lemma} \label{lem:PairExplicit}
Let $F,G \in \C^\infty(\HC),$ and assume that one of $F,G$ is compactly supported. Then
\[
\langle F,G \rangle= \sum_{i} \int_\RR (-1)^{|\gamma_i^{(1)}||\gamma_i^{(2)}|}F(\gamma_i^{(1)};x) G(\gamma_i^{(2)};x) \, \mathrm{d} x.
\]
Similarly, if $U \in \mathcal D'(\HC)$ or $U \in \mathcal E'(\HC),$ there are distributions  $U(\gamma_i^{(1)}) \in \mathcal D'(\RR)$ or in $\mathcal E'(\RR)$ such that 
\begin{equation} \label{eq:PairingDist}
\langle U, \Phi \rangle= \sum_i (-1)^{|\gamma_i^{(1)}||\gamma_i^{(2)}|} \langle U(\gamma_i^{(1)}), \Phi(\gamma_i^{(2)}) \rangle 
\end{equation}
for all $\Phi \in \C_c^\infty(\HC)$ or in $\C^\infty(\HC),$ respectively.
\end{lemma}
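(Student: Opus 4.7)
The strategy is to unravel both assertions directly from the definition of the algebra product on $\C^\infty(\HC)$ and the explicit form of $\Delta(\gamma)$ given in Remark~\ref{rem:deltagamma}.

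For part (a), I start from $\langle F,G\rangle = \int_\RR (F\cdot G)(\gamma;x)\,\mathrm{d}x$ and unpack $F\cdot G = \mu\circ(F\otimes G)\circ\Delta$. Writing $\Delta(\gamma) = \sum_i \gamma_i^{(1)}\otimes\gamma_i^{(2)}$ and applying the Koszul sign rule yields
\[
(F\cdot G)(\gamma;x) = \sum_i (-1)^{|G||\gamma_i^{(1)}|}\, F(\gamma_i^{(1)};x)\,G(\gamma_i^{(2)};x).
\]
A term contributes only when $|F|=|\gamma_i^{(1)}|$ and $|G|=|\gamma_i^{(2)}|$, since $F(u)$ vanishes unless $|F|=|u|$; on such terms the Koszul sign coincides with $(-1)^{|\gamma_i^{(1)}||\gamma_i^{(2)}|}$. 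Integrating over $\RR$ gives the claimed formula, and the hypothesis on supports ensures each product $F(\gamma_i^{(1)})\,G(\gamma_i^{(2)})$ is compactly supported so the integral converges.

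For part (b), I use the explicit decomposition $\Delta(\gamma) = \sum_{I\subset\underline n} a_I\otimes *a_I$, so that $\gamma_I^{(1)} = a_I$ and $\gamma_I^{(2)} = *a_I = \mathrm{sgn}(\sigma_I)\,a_{I^c}$. For $U\in\mathcal D'(\HC)$ I \emph{define}
\[
U(a_I)(g) := (-1)^{|a_I||*a_I|}\,\mathrm{sgn}(\sigma_I)\,U(g\otimes\xi^{I^c}),\qquad g\in C_c^\infty(\RR).
\]
Since $g\mapsto g\otimes\xi^{I^c}$ is a continuous map $C_c^\infty(\RR)\to\C_c^\infty(\HC)$ under the identification of Remark~\ref{rem:Distributions}, this produces a genuine element of $\mathcal D'(\RR)$. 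To verify the identity, I expand $\Phi = \sum_J \Phi(a_J)\otimes\xi^J$ and substitute $\Phi(*a_I) = \mathrm{sgn}(\sigma_I)\,\Phi(a_{I^c})$; the two copies of $(-1)^{|a_I||*a_I|}$ and of $\mathrm{sgn}(\sigma_I)$ then cancel in pairs, leaving
\[
\sum_I (-1)^{|a_I||*a_I|}\,\langle U(a_I),\Phi(*a_I)\rangle \;=\; \sum_I U\bigl(\Phi(a_{I^c})\otimes\xi^{I^c}\bigr) \;=\; U(\Phi)
\]
by linearity of $U$ and reindexing $J=I^c$. The case $U\in\mathcal E'(\HC)$ is obtained verbatim with $C_c^\infty(\RR)$ replaced by $C^\infty(\RR)$.

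The main obstacle is the sign calculus: one must check that the Koszul sign introduced by the Hopf algebra product, the permutation sign $\mathrm{sgn}(\sigma_I)$ arising from the Hodge-type duality $a_I\leftrightarrow *a_I$, and the explicit factor $(-1)^{|\gamma_i^{(1)}||\gamma_i^{(2)}|}$ in the statement all mesh correctly. Once this bookkeeping is disposed of, both assertions reduce to routine applications of the definitions of the invariant integral and of the identification $\mathcal D'(\HC)\cong\mathcal D'(\RR)\otimes\Lambda V$.
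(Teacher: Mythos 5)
Your proof is correct and follows essentially the same route as the paper: you unwind $F\cdot G=\mu\circ(F\otimes G)\circ\Delta$, note that nonzero terms force $|G|=|\gamma_i^{(2)}|$ (and $|F|=|\gamma_i^{(1)}|$) because $C^\infty(\RR)$ is purely even, so the Koszul sign becomes $(-1)^{|\gamma_i^{(1)}||\gamma_i^{(2)}|}$, and you reduce the distributional statement to the identification $\mathcal D'(\HC)\cong\mathcal D'(\RR)\otimes\Lambda V$ of Remark~\ref{rem:Distributions}. The only difference is that you make explicit the sign-adjusted definition of the distributions $U(\gamma_i^{(1)})$ and the cancellation of the two $\mathrm{sgn}(\sigma_I)$ factors, a verification the paper leaves to the reader.
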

\begin{proof}
By definition, 
\begin{align*}
(F\cdot G)(\gamma;x) &= \mu((F \otimes G)( \Delta(\gamma);x,x)) \\
&= \sum_i \mu((F \otimes G)(\gamma_i^{(1)} \otimes \gamma_i^{(2)};x,x)) \\
&= \sum_i (-1)^{|\gamma_i^{(1)}||G|} F(\gamma_i^{(1)};x) G(\gamma_i^{(2)};x).
\end{align*}
Now  observe that $|G(\gamma_i^{(2)})| = |G|+|\gamma_i^{(2)}|=0,$ since $C^\infty(\RR)$ is purely even, and it follows that $|G| = |\gamma_i^{(2)}|.$ 

The fact that $\langle U,\Phi \rangle$ can be written as $\sum_i (-1)^{|\gamma_i^{(1)}||\gamma_i^{(2)}|} U(\gamma_i^{(1)})(\Phi(\gamma_i^{(2)}))$ follows from Remark~\ref{rem:Distributions} b).
\end{proof}


\section{Representations} \label{sct:Rep}

We define representations and unitary representations of Lie supergroup pairs. In this, we follow Alldridge~\cite[Appendix B]{Alld:10} and Carmeli {\it et.al.}~\cite{Vara}. Then we use spin modules to construct a family $(\pi_\zeta)_{\zeta \in \CC}$ of representations of $\HC.$ Salmasian showed in~\cite{Salm} that all irreducible unitary representations of $\HC$ are, up to unitary equivalence, of the form $\pi_\zeta$ with $\zeta$ real and positive. Since the representations $\pi_\zeta$ are a crucial ingredient in our definition of the Fourier transform, we give a detailed description.

\begin{definition}
Let $V$ be a finite-dimensional super vector space over $\KK \in \{ \RR, \CC\}$. A \emph{representation $\pi = (\pi_0, d\pi)$ of a Lie supergroup pair $G=(G_0,\g)$ on $V$} consists of a representation $\pi_0$ of $G_0$ on $V$ by even $\KK$-linear maps, and a Lie superalgebra representation $d\pi$ of $\hc$ on $V,$ such that $d(\pi_0) = d\pi|_{\g_0}$ and $d\pi(\Ad(g)x) = \pi_0(g) d\pi(x) \pi_0(g^{-1})$ for all $g \in G_0$ and $x \in \g.$ 
\end{definition}

The global functions $\mathbb{A} := \C^\infty_G(G_0)$ form a supercommutative $\RR$-superalgebra. If $V$ is a finite-dimensional super vector space, then so is $\underline{\End}(V),$ and we define $\C^\infty(G_0, \underline{\End}(V)) := (\mathbb A \otimes \underline{\End}(V))_0.$ This space can be identified with the space $\End_{\mathbb A}(\mathbb{A} \otimes V)$ of even $\mathbb{A}$-linear endomorphisms of the left $\mathbb{A}$-module $\mathbb{A} \otimes V.$ Consider the subset $\mathrm{GL}(\mathbb{A} \otimes V)$ of invertible $\mathbb{A}$-linear endomorphisms. We have the following characterization of linear representations of $\HC$ on $V.$

\begin{proposition} \label{prop:Rep}
Linear representations $\pi$ of $G$ on a finite-dimensional super vector space $V$ are in bijective correspondence with elements $F \in \mathrm{GL}(\mathbb{A} \otimes V) \subset \C^\infty(G_0,\underline{\End}(V))$ which satisfy
\[
(m^* \otimes \id_V) \circ  F = (\id_V \otimes F) \circ F \quad \text{and} \quad
(e^* \otimes \id_V) \circ F = \id_V.
\]
\end{proposition}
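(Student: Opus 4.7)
\emph{Strategy.} The claim is essentially an equivalence of two descriptions of a representation, so my plan is to exhibit the bijection explicitly in both directions and then verify the required conditions and mutual invertibility.

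In the forward direction, given a representation $\pi = (\pi_0, d\pi)$, I would define $F \in \mathrm{GL}(\mathbb A \otimes V)$ by the matrix-coefficient formula
\[
F(u; g) := \pi_0(g) \circ d\pi(u), \qquad u \in \U(\g), \; g \in G_0.
\]
The compatibility $d(\pi_0) = d\pi|_{\g_0}$ immediately yields the $\g_0$-equivariance required for $F$ to belong to $\underline{\Hom}_{\g_0}(\U(\g), C^\infty(G_0, \underline{\End}(V)))$: differentiating $\pi_0(g\exp(tx)) = \pi_0(g)\pi_0(\exp(tx))$ at $t=0$ shows that $F(xu; g)$ coincides with the left-invariant derivative of $F(u; \cdot)$ at $g$ in direction $x \in \g_0$. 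Parity compatibility is automatic because $\pi_0(g) \in \End(V)_0$ and $d\pi$ preserves parity, so $F$ lies in the even part $(\mathbb A \otimes \underline{\End}(V))_0$. Invertibility in $\mathrm{GL}(\mathbb A \otimes V)$ is witnessed by the explicit section $F'(u;g) := d\pi(S(u)) \circ \pi_0(g^{-1})$, the identity $F \cdot F' = 1 \otimes \id_V$ following from the Hopf axiom $\mu \circ (\id \otimes S) \circ \Delta = \eta \circ \epsilon$ combined with multiplicativity of $\pi_0$.

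In the reverse direction, given $F$ satisfying the two conditions, I would set $\pi_0(g) := F(1; g)$ and $d\pi(u) := F(u; e)$ for $u \in \U(\g)$. The identity condition $(e^* \otimes \id_V) \circ F = \id_V$ gives $\pi_0(e) = \id_V$ and $d\pi(1) = \id_V$. The cocycle condition, specialized to $u = 1$ and ordinary group elements $(g,h)$, collapses after unwinding the supergroup multiplication $m^*$ into $\pi_0(gh) = \pi_0(g)\pi_0(h)$, making $\pi_0$ a smooth Lie group representation on $V$. Evaluated instead at $(g, h) = (e, e)$ with a general $u \in \U(\g)$ and the coproduct $\Delta(u) = \sum_i u_i^{(1)} \otimes u_i^{(2)}$, it forces $d\pi$ to be a unital superalgebra homomorphism, and specialized to $u \in \g$ at general $(g, e)$ it recovers the intertwining $d\pi(\Ad(g)x) = \pi_0(g)\, d\pi(x)\, \pi_0(g^{-1})$ after using associativity of $m^*$ on the triple $(g, \exp(tx), g^{-1})$ and differentiating at $t=0$. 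Together these data constitute a representation of the Lie supergroup pair.

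To finish I would show the two constructions are mutually inverse. Any $F$ is determined by the values $F(1; g)$ together with $\g_0$-equivariance, extended to the purely odd directions via the isomorphism $\C^\infty_G(G_0) \cong C^\infty(G_0) \otimes \Lambda V^*$ and a PBW basis of $\U(\g)$, so agreement of the round-trip $F$ with the original on these generators suffices. The main technical obstacle I anticipate is the careful bookkeeping of Koszul signs when unwinding the cocycle condition for a general $u \in \U(\g)$: one must show that the required $\Ad$-compatibility condition on $(\pi_0, d\pi)$ is genuinely encoded by the cocycle rather than needing to be imposed by hand, and that the reconstructed $d\pi$ is consistent with the supercommutator on $\g$. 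Once those signs line up correctly, both directions of the correspondence and their inverseness are formal consequences of the super Hopf structure on $\U(\g)$ and $\mathbb A$.
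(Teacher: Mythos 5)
The paper does not actually prove this proposition: its entire ``proof'' is a citation to Proposition~B.19 of Alldridge's paper \cite{Alld:10}, followed by a record of the correspondence formula $F(u;x)=\pi_0(x)\circ d\pi(u)$ --- which is exactly your forward map. So your proposal supplies the argument that the paper outsources, and the route you take is the standard one: evaluate the cocycle identity at $(u\otimes v;x,y)$, where it reads $\pi_0(xy)\,d\pi(\,\cdot\,)=\pi_0(x)\,d\pi(u)\,\pi_0(y)\,d\pi(v)$, and extract multiplicativity of $\pi_0$, multiplicativity of $d\pi$, and the $\Ad$-compatibility by the specializations you list; the antipode then furnishes the inverse in $\mathrm{GL}(\mathbb{A}\otimes V)$ via $\sum_i u_i^{(1)}S(u_i^{(2)})=\epsilon(u)1$, exactly as you say. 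Two places deserve more care than your sketch gives them. First, extracting $d\pi(\Ad(g)x)=\pi_0(g)\,d\pi(x)\,\pi_0(g^{-1})$ requires the general Koszul formula for $m^*$ on a nonabelian $G_0$, which involves $\Ad$ explicitly; this paper only writes $m^*$ down for the abelian case $\HC$, so your ``collapse'' of the cocycle condition silently assumes that formula. Second, your appeal to the coproduct $\Delta(u)$ when deriving that $d\pi$ is an algebra homomorphism is misplaced: $(m^*\otimes\id_V)\circ F$ evaluated at $u\otimes v$ involves the \emph{product} $uv$ in $\U(\g)$, not the coproduct, so $d\pi(uv)=d\pi(u)d\pi(v)$ falls out directly; the Hopf structure genuinely enters only through the antipode in your invertibility check and through the $\g_0$-equivariance built into $\mathbb{A}=\C^\infty_G(G_0)$, which is what forces $d\pi|_{\g_0}=d(\pi_0)$ in the reverse direction (and where the paper's sign convention, e.g.\ $zf=-f'$ on $\HC$, must be threaded through). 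Neither point is a gap in the mathematics, only in the bookkeeping.
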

\begin{proof}
See~\cite[Proposition B.19]{Alld:10}. For later use, we just note that the element $F \in \C^\infty(G_0,\underline{\End}(V))$ corresponding to a representation $\pi=(\pi_0, d\pi)$ is given by
\begin{equation} \label{eq:Rep}
F(u;x) = \pi_0(x) \circ d\pi(u) \in \underline{\End}(V).
\end{equation}
\end{proof}

\begin{definition}
Let $(\mathcal H, (\cdot, \cdot))$ be a $\ZZ_2$-graded Hilbert space over $\CC.$ We say that $(\mathcal H, (\cdot, \cdot))$ is a \emph{super Hilbert space} if the graded pieces are orthogonal with respect to $(\cdot, \cdot).$ If $(\mathcal H, (\cdot, \cdot))$ is a super Hilbert space, we define the \emph{super inner product} by $\langle u |v \rangle := i^{|u||v|}(u,v),$ and the super adjoint $T^\dagger$ of a continuous linear operator by $T^\dagger  := (-1)^{|T|}T^*,$ where $T^*$ is the usual adjoint.
\end{definition}

\begin{remark}
The definitions of $\langle \cdot | \cdot \rangle$ and $T^\dagger$ are such that
\[
\langle u |v \rangle = (-1)^{|u||v|} \overline{\langle v |u\rangle} \quad \text{and} \quad \langle Tu |v \rangle = (-1)^{|u||T|} \langle u |  T^\dagger v \rangle.
\]
\end{remark}

\begin{definition}
A representation $\pi = (\pi_0, d\pi)$ of a Lie supergroup pair $(G_0, \g)$ on a finite-dimensional super Hilbert space is \emph{unitary} if $\pi_0$ is a unitary representation of $G_0$ and $d\pi(u)^\dagger = -d\pi(u)$ for all $u \in \g.$ 
\end{definition}

\begin{remark}
\begin{enumerate}[a)]
\item Observe that if $\pi$ is a unitary representation, then
\[
d\pi(u)^\dagger = d\pi(S(u))
\]
for all $u \in \U(\g).$ 
\item We restrict our attention to finite-dimensional representations because all irreducible unitary representations of $\HC$ are finite-dimensional. In the general setting, there are technical subleties due to the fact that the operators $d\pi(x), x \in \g_1$ are in general unbounded (see \cite[Definition 2]{Vara}).
\item Suppose that $\pi = (\pi_0, d\pi)$ is a unitary representation of $G=(G_0,\g).$ If we let $\rho(x) = e^{-i\pi/4}d\pi(x)$ for $x \in \g_1,$ then the $\rho(x)$ are self-adjoint and satisfy 
\[
\rho(x) \rho(y) + \rho(y) \rho(x) = -id\pi([x,y])
\]
(see~\cite[Section 2.3]{Vara} for details). 
\end{enumerate}
We will use this observation in the next subsection by first constructing operators $c_\zeta(v)$ for $v \in \hc_1 =V,$ which are self-adjoint if $\zeta$ is real and non-negative, and then setting
\[
d\pi_\zeta(v)= e^{i\pi/4} c_\zeta(v)
\]
for $v \in V.$
\end{remark}


\subsection*{Spin Modules}

The construction by Carmeli {\it et.al.}~\cite{Vara} and Salmasian~\cite{Salm} of unitary representations of $\HC$ is based on the following idea. If $\mathcal H$ is an irreducible unitary representation of $\HC,$ then by a super version of Schur's lemma, the central element $z$ acts by a scalar $i\zeta.$ This scalar $\zeta$ has to be positive, essentially because $z$ is the square of an odd element in $\hc.$ The operators $c_\zeta(v) = e^{-i\pi/4} d\pi(v)$ for $v \in V$ are self-adjoint and satisfy
\[
[c_\zeta(v),c_\zeta  w)] = 2 \zeta \beta(v,w) \id.
\]
This means that $c_\zeta$ factors through a representation of the quotient of the complexified universal enveloping algebra $\U(\hc)_\CC$ by the ideal generated by $z-\zeta.$ But this quotient is a Clifford algebra $Cl(V_\CC, \zeta \beta),$ and the irreducible representations of Clifford algebras are well-known.

We will need this construction also for general $\zeta \in \CC,$ in which case the corresponding representations are no longer unitary. Also, we need some refined information about the representation, and therefore we recall the construction in some detail. As additional references, we use the exposition by Deligne~\cite[Proposition 2.2]{Deligne}, and the book by Rosenberg~\cite[Section 2.2.2]{SRosenberg}.

\begin{proposition} \label{prop:AlgIso}
Consider the complex space $(V_\CC, \zeta \beta),$ where $\zeta$ is any non-zero complex number.
\begin{enumerate}[a)]
\item If $\dim V = 2k>0$ is even, then $Cl(V_\CC, \zeta \beta)$ is isomorphic as complex superalgebra to $\mathcal H = \underline \End(\mathcal S),$ where $\mathcal S$ is the complex super vector space $\CC^{N|N}, \; N=2^{k-1}.$
\item Let $D$ be the superalgebra $\CC[\epsilon]$ with $\epsilon$ odd and $\epsilon^2 = \zeta.$ If $\dim V = 2k+1,$ then $Cl(V_\CC, \zeta \beta)$ is isomorphic as complex superalgebra to $\mathcal H =\underline{\End}_D(\mathcal S),$ where $\mathcal S = D^N = D \otimes_\CC \CC^N, \, N = 2^k$ is a left $D$-module.
\end{enumerate}
\end{proposition}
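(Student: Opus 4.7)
The plan is to construct the desired isomorphism explicitly via a spin representation built from an isotropic (Lagrangian) splitting of $V_\CC$ in the even case, and to reduce the odd case to the even case by tensoring with the rank-one Clifford algebra $D$.

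For part (a), observe that $\zeta\beta$ is a non-degenerate symmetric bilinear form on the complex $2k$-dimensional space $V_\CC$, hence admits a Lagrangian decomposition $V_\CC = W \oplus W^*$ into complementary maximal isotropic subspaces of dimension $k$, mutually paired by $\zeta\beta$. Set $\mathcal{S} := \Lambda W$ with its $\ZZ_2$-grading by parity of degree; this gives $\mathcal{S} \cong \CC^{N|N}$ with $N = 2^{k-1}$. Define an odd linear map $c \colon V_\CC \to \underline{\End}(\mathcal{S})$ by letting $w \in W$ act by exterior multiplication and letting $\varphi \in W^*$ act by the graded contraction $\iota_\varphi$, normalized so that $\iota_\varphi(w) = 2\zeta\beta(\varphi,w)\cdot 1$ on $\Lambda^1 W$ and extended to all of $\Lambda W$ by the super Leibniz rule. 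A short computation then shows $c(v)c(v') + c(v')c(v) = 2\zeta\beta(v,v')\,\id$ for all $v,v' \in V_\CC$, so by the universal property of the Clifford algebra, $c$ extends to a homomorphism of complex superalgebras $\Phi \colon Cl(V_\CC,\zeta\beta) \to \underline{\End}(\mathcal{S})$. Irreducibility of $\mathcal{S}$ as a $Cl(V_\CC,\zeta\beta)$-module --- the vacuum $1 \in \Lambda^0 W$ generates $\mathcal{S}$ under wedging by $W$, and any nonzero vector can be reduced to a nonzero scalar multiple of the vacuum by successive contractions --- shows that $\Phi$ is injective, and combined with the equality of complex dimensions $\dim_\CC Cl(V_\CC,\zeta\beta) = 2^{2k} = (2N)^2 = \dim_\CC \underline{\End}(\mathcal{S})$, $\Phi$ is the required isomorphism.

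For part (b), take the orthonormal basis $(a_i)_{i=1}^{2k+1}$, set $V' := \mathrm{span}(a_1,\ldots,a_{2k})$, and use the standard fact that the Clifford algebra of an orthogonal direct sum is the graded tensor product of the Clifford algebras of the summands to obtain
\[
Cl(V_\CC,\zeta\beta) \;\cong\; Cl(V'_\CC,\zeta\beta|_{V'})\,\widehat\otimes\, D,
\]
where $D \cong \CC[\epsilon]/(\epsilon^2 - \zeta)$ with $\epsilon = a_{2k+1}$. Applying (a) to the first factor yields an isomorphism to $\underline{\End}(\mathcal{S}_0)\,\widehat\otimes\, D$ with $\mathcal{S}_0 \cong \CC^{N|N}$ and $N = 2^k$. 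Setting $\mathcal{S} := D \otimes_\CC \CC^N \cong D^N$ as a free left $D$-module of rank $N$, one identifies $\underline{\End}(\mathcal{S}_0)\,\widehat\otimes\, D$ with $\underline{\End}_D(\mathcal{S})$ in the standard way: the first tensor factor acts on $\CC^N$ and $D$ acts on the $D$-factor with the sign rule $(A\,\widehat\otimes\, d)(s\otimes d') = (-1)^{|d||s|}(As)\otimes(d d')$. A final dimension check $\dim_\CC Cl(V_\CC,\zeta\beta) = 2^{2k+1} = 2N^2 = \dim_\CC \underline{\End}_D(\mathcal{S})$ confirms the isomorphism.

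The main technical obstacle is the verification of the Clifford relation in the Lagrangian picture, specifically for mixed elements $v = w + \varphi \in W \oplus W^*$. The vanishing $c(w)^2 = 0 = c(\varphi)^2$ follows at once from $w \wedge w = 0$ and from $\iota_\varphi^2 = 0$ (a consequence of the super Leibniz rule), but the cross-term identity $c(w)c(\varphi) + c(\varphi)c(w) = 2\zeta\beta(w,\varphi)\,\id$ requires careful bookkeeping with signs and with the normalization factor $2\zeta$ inserted into the contraction. Once this identity is in place, everything else is a comparison of dimensions together with the routine irreducibility check.
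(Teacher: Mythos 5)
Your argument is correct in substance but takes a genuinely different route from the paper's. For (a) the paper does not use a Lagrangian splitting: it invokes the decomposition $Cl(V_\CC,\zeta\beta)\cong Cl(\CC^2)\otimes\cdots\otimes Cl(\CC^2)$ to reduce to $k=1$ and then writes down explicit $2\times 2$ matrices $c_\zeta(a_1),c_\zeta(a_2)$ for an orthonormal basis, citing Deligne for the isomorphism; for (b) it likewise reduces to $\dim V=3$ and exhibits explicit generators $c_\zeta(a_0)=ix_0x_1x_2$, etc., inside $D\otimes\mathrm{Mat}(2,\CC)$. Your fermionic Fock space model $\mathcal S=\Lambda W$ is basis-free and arguably cleaner, and your irreducibility-plus-dimension-count argument is a legitimate substitute for the citation of Deligne --- though note that irreducibility most directly gives \emph{surjectivity} (Burnside's theorem over $\CC$), with injectivity then following from the equality of dimensions (or from simplicity of the Clifford algebra); your phrasing has these roles reversed. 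What the paper's explicit matrices buy is used downstream: Lemma~\ref{lem:Tracec} reads off the sesquilinear form $(\cdot,\cdot)_\zeta$ making the $c_\zeta(v)$ self-adjoint and computes $T(c_\zeta(\gamma))=(2\zeta)^{[n]}$ directly from those matrices, and Remark~\ref{rem:PolyMatrix} uses that their entries are visibly polynomial in $\zeta$; in your model you would still need to write out the action of an orthonormal basis to recover these facts. One small slip in your part (b): applying (a) to the $2k$-dimensional space $V'$ gives $\mathcal S_0\cong\CC^{2^{k-1}|2^{k-1}}$, so $\dim_\CC\mathcal S_0=2^k=N$, not $\mathcal S_0\cong\CC^{N|N}$ with $N=2^k$ as written; with that correction your identification of $\underline{\End}(\mathcal S_0)\,\widehat\otimes\,D$ with $\underline{\End}_D(D^N)$ and the final count $2^{2k}\cdot 2=2N^2$ are consistent.
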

\begin{proof}
We first consider the case $\dim V_\CC =2k >0.$ The choice of an orthonormal basis in $V$ yields a tensor product decomposition
\[
Cl(V_\CC, \zeta \beta) = Cl(\CC^2) \otimes \cdots \otimes Cl(\CC^2),
\]
(see \cite{Deligne}), where the spaces $\CC^2$ are equipped with the bilinear form $(u,v) = \zeta(u_1v_1 + u_2 v_2).$ Therefore, it suffices to consider the case $k=1.$ Let $a_1, a_2$ be an orthonormal basis of $V_\CC$ and let
\[
c_\zeta(a_1) := \begin{pmatrix} 0&\zeta \\ 1 & 0 \end{pmatrix}, \quad c_\zeta(a_2):= \begin{pmatrix} 0& i\zeta \\ -i & 0\end{pmatrix} \in \underline{\End}(\CC^{1,1}).
\]
This clearly defines a representation of $Cl(\CC^2)$ and the arguments in \cite{Deligne} show that $c_\zeta$ is an algebra isomorphism.

Now we turn to the case of $\dim V_\CC = 2k+1,$ and again we can reduce to $k=1.$ It suffices to construct elements $c_\zeta(a_i) \in \underline{\End}_D(D \otimes \CC^2)$ for a basis $a_0, a_1, a_2$ of $V_\CC.$ To this end, we follow~\cite{Vara}.

We let $x_0 = \epsilon \otimes \id, x_1 = 1 \otimes c_1(a_1)$ and $x_2= 1 \otimes c_1(a_2)$ in $D \otimes \mathrm{Mat}(2, \CC),$ where the $c_1(a_i)$ are defined as in the even case with $\zeta = 1.$ Now we let
\[
c_\zeta(a_0) = i x_0 x_1 x_2, \; c_\zeta(a_1) = -ic_\zeta(a_0) x_1, \;  c_\zeta(a_2) = -ic_\zeta(a_0) x_2
\]
(see~\cite{Vara}). Then a simple computation shows that this defines a representation of $Cl(V_\CC, \zeta \beta),$ and an isomorphism of superalgebras.
\end{proof}

\begin{definition}
We define the symbol $[n]$ for $n \in \NN$ by $[n]= n/2$ if $n$ is even and $[n] = (n+1)/2$ if $n$ is odd. Let $\mathcal H$ be as in  Proposition~\ref{prop:AlgIso}, and consider the linear functional $T$ on $\mathcal H$ defined by
\[
T(A) = \begin{cases} \mathrm{STr}(A), & n \; \text{even} \\
\mathrm{Tr}(e^{i\pi/4}\epsilon A), & n\; \text{odd} \end{cases}.
\]
We define a sesquilinear form on $\mathcal H$ by $\langle A| B \rangle := T(AB^\dagger).$
\end{definition}

\begin{lemma} \label{lem:Tracec}
Let $\mathcal S$ and $\mathcal H$ be as in Proposition~\ref{prop:AlgIso}.
\begin{enumerate}[a)]
\item If $\zeta$ is real, then $\mathcal S$ carries a sesquilinear form $(\cdot, \cdot)_\zeta$ such that $c_\zeta(v)$ is self-adjoint for $v \in V.$ The sesquilinear form is positive definite if $\zeta >0.$ If $\zeta<0$ the form is positive definite on $\mathcal S_0$ and negative definite on $\mathcal S_1.$ 
\item Let $(a_i)_{i=1}^n$ be an orthonormal basis of $V.$ Then
\[
T(e^{i n \pi/4} c_\zeta(\gamma)) = (2\zeta)^{[n]}
\]
and $T(c_\zeta(a_I)) = 0$ if $I \subsetneq \underline n.$ The sesquilinear form $\langle A | B \rangle = T(AB^\dagger)$ on $\mathcal H$ is non-degenerate, positive definite on $\mathcal H_0$ and negative definite on $\mathcal H_1.$ 
\end{enumerate}
\end{lemma}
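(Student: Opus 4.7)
The plan is to exploit the tensor-product decomposition of $Cl(V_\CC, \zeta\beta)$ from Proposition~\ref{prop:AlgIso}: both the sesquilinear form on $\mathcal S$ (part a) and the trace formulas on $\mathcal H$ (part b) will be built factor by factor from a two-dimensional base case (plus, in odd dimension, an extra $D$-factor), with super-multiplicativity propagating the conclusions to the full spin module.

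For part (a), I place on the base $\mathcal S = \CC^{1,1}$ the sesquilinear form $(e_0, e_0)_\zeta = 1$, $(e_1, e_1)_\zeta = \zeta$, $(e_0, e_1)_\zeta = 0$ and verify self-adjointness of the matrices $c_\zeta(a_1), c_\zeta(a_2)$ from Proposition~\ref{prop:AlgIso} by a direct computation; the signature claim is then immediate (positive on $\CC e_0$, and with sign equal to $\mathrm{sgn}(\zeta)$ on $\CC e_1$). For general even $n = 2k$ I take the graded tensor product of $k$ copies, and both self-adjointness and the signature pass to tensor products. For odd $n = 2k+1$ I equip $D = \CC \oplus \CC \epsilon$ with the form $(1,1)_D = 1$, $(\epsilon, \epsilon)_D = \zeta$ so that $\epsilon$ is self-adjoint, tensor with the even construction, and then use $c_\zeta(a_0) = ix_0 x_1 x_2$ together with $c_\zeta(a_j) = -ic_\zeta(a_0) x_j$ and the product rule for $\dagger$ to promote self-adjointness of $x_0, x_1, x_2$ to self-adjointness of each $c_\zeta(a_i)$.

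For part (b), I first compute $c_\zeta(\gamma)$ in the base cases. In the even base case $c_\zeta(a_1 a_2) = \mathrm{diag}(-i\zeta,\, i\zeta)$ has $\mathrm{STr} = -2i\zeta$, whereas $\mathrm{STr}(\id) = \mathrm{STr}(c_\zeta(a_1)) = \mathrm{STr}(c_\zeta(a_2)) = 0$. Multiplicativity of the supertrace then yields $T(c_\zeta(\gamma)) = (-2i\zeta)^k$ and $T(c_\zeta(a_I)) = 0$ whenever $I \subsetneq \underline n$ (at least one factor must then be $\id$, $c_\zeta(a_1)$, or $c_\zeta(a_2)$, each of supertrace zero); multiplying by $e^{in\pi/4} = i^k$ gives $(2\zeta)^{[n]}$. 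For odd $n$, centrality of $\gamma$ in the Clifford algebra forces $c_\zeta(\gamma) \in D \cdot \id_{\mathcal S}$; its odd parity together with $\gamma^2 = (-1)^{n(n-1)/2}\zeta^n$ pins down $c_\zeta(\gamma) = \delta\,\epsilon\cdot \id$ up to a sign fixed by the $k = 1$ case, and the twisted trace $T(e^{in\pi/4}c_\zeta(\gamma))$ evaluates via $\mathrm{Tr}(\epsilon^2 \cdot \id) = \zeta \dim_\CC \mathcal S$. The vanishing for $I \subsetneq \underline n$ in the odd case reduces via $V_\CC = V'_\CC \oplus \CC a_0$ to the already-treated even case on $V'$ together with a direct computation of $\mathrm{Tr}(\epsilon^a \cdot \id)$ on the $D$-factor.

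Finally, for non-degeneracy of $\langle \cdot | \cdot \rangle$, I use that $\{c_\zeta(a_I)\}_{I \subset \underline n}$ is a basis of $\mathcal H$: self-adjointness of the generators yields $c_\zeta(a_I)^\dagger = \lambda_I c_\zeta(a_I)$ with $\lambda_I \in \CC^\times$, so $c_\zeta(a_I) c_\zeta(a_J)^\dagger$ is a nonzero scalar multiple of $c_\zeta(a_{I \triangle J})$; by the vanishing part of the trace formula, $\langle c_\zeta(a_I) | c_\zeta(a_J) \rangle$ is nonzero precisely when $J = \underline n \setminus I$, so the Gram matrix is anti-diagonal with nonzero entries. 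The signature on $\mathcal H_0, \mathcal H_1$ will follow by identifying $\langle \cdot | \cdot \rangle$, up to a positive normalization, with the natural Hilbert--Schmidt-type pairing on $\underline{\End}(\mathcal S)$ induced by the form from part (a). I expect the main obstacle to be sign bookkeeping: tracking powers of $i$, Koszul signs in the super-tensor product, and the $(-1)^{|T|}$ factor in $T^\dagger$, particularly in the odd-dimensional case where the $D$-factor interacts with the $\epsilon$ that appears in the very definition of $T$.
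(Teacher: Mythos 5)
Your proposal follows essentially the same route as the paper's proof: part a) is handled by writing down the form $u_1\bar v_1+\zeta u_2\bar v_2$ on the two-dimensional base case (and on the factor $D$ in the odd case) and propagating self-adjointness and the signature through the tensor product decomposition of Proposition~\ref{prop:AlgIso}, and the trace formula in part b) is obtained from the explicit low-dimensional cases together with multiplicativity of the (super)trace on tensor products. In fact you are more complete than the printed proof on two points it passes over in silence: the vanishing $T(c_\zeta(a_I))=0$ for $I\subsetneq\underline n$ (your observation that some tensor factor must then be $\id$, $c_\zeta(a_1)$ or $c_\zeta(a_2)$, each of supertrace zero, is exactly the missing argument), and the non-degeneracy of $\langle\cdot|\cdot\rangle$ (your anti-diagonal Gram matrix argument in the basis $\{c_\zeta(a_I)\}$ is correct, using $c_\zeta(a_I)^\dagger=\pm c_\zeta(a_I)$ for real $\zeta$ and $c_\zeta(a_I)c_\zeta(a_J)=\pm\zeta^{\#(I\cap J)}c_\zeta(a_{I\triangle J})$).

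The one step that will not go through as sketched is the final signature claim. You propose to identify $\langle A|B\rangle=T(AB^\dagger)$, up to a positive normalization, with the Hilbert--Schmidt pairing on $\underline{\End}(\mathcal S)$ induced by $(\cdot,\cdot)_\zeta$. But for $n$ even $T$ is the supertrace, and for block-diagonal $A$ one has $\mathrm{STr}(AA^*)=\|A_{00}\|^2-\|A_{11}\|^2$, which has split signature on $\mathcal H_0$; already $\langle\id\,|\,\id\rangle=\mathrm{STr}(\id)=N-N=0$. So the proposed identification cannot deliver positive definiteness on $\mathcal H_0$, and this part of your argument has a genuine gap. Note, however, that the paper's own proof is entirely silent on the definiteness assertion, so either there is an intended convention that needs to be located and made explicit, or this clause of the lemma requires a separate argument (or a corrected statement) that neither you nor the paper currently supplies.
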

\begin{proof}
a)
Assume that $\zeta$ is real. In the case of $n=2$ we define the sesquilinear form $(\cdot, \cdot)_\zeta$ on $\mathcal S = \CC^2$ by $(u,v)_\zeta = u_1 \bar v_1 + \zeta u_2 \bar v_2.$ Clearly, the $c_\zeta(a_i)$ are self-adjoint. Moreover, the inner product is negative definite on $\mathcal S_1$ if $\zeta <0.$ Taking tensor products yields the general result for the even case.

Now we consider the case when $n$ is odd. We identify $D \cong \CC^2,$ so that multiplication by $\epsilon$ has matrix $\begin{pmatrix} 0&\zeta \\ 1& 0\end{pmatrix}$ with respect to the standard basis of $\CC^2.$ If we define $(\cdot, \cdot)_\zeta$ on $D$ by $(u,v)_\zeta = u_1 \bar v_1 + \zeta u_2 \bar v_2,$ then multiplication by $\epsilon$ is self-adjoint. By definition (see proof of Proposition~\ref{prop:AlgIso}), the operators $c_\zeta(a_i)$ are of the form $\epsilon \otimes A$ where $A$ is self-adjoint with respect to the standard inner product on $\CC^{2^k}.$ Extending these inner products to $\mathcal S = D \otimes \CC^{2^k}$ makes all $c_\zeta(v), v \in V$ self-adjoint. Again, it is clear that if $\zeta <0,$ then the inner product is negative definite on $\mathcal S_1.$ 

b)
If $n=2,$
\[
c_\zeta(a_1a_2) = \begin{pmatrix} -i\zeta & 0 \\ 0 & i\zeta \end{pmatrix}  
\]
shows that $T(ic_\zeta(\gamma)) = 2\zeta.$ If $n=3,$ we have
\[
c_\zeta(a_0) = \epsilon \otimes \begin{pmatrix} 1 & 0 \\ 0 & -1 \end{pmatrix}, \; 
c_\zeta(a_1) = \epsilon \otimes \begin{pmatrix} 0 & -i \\ i & 0 \end{pmatrix}, \;
c_\zeta(a_2) =  \epsilon \otimes\begin{pmatrix} 0 & 1 \\ 1 & 0 \end{pmatrix} 
\]
and hence $c_\zeta(\gamma) = \zeta \epsilon \otimes \begin{pmatrix} -i&0 \\ 0&-i \end{pmatrix}.$ If we multiply by $i \epsilon,$ we obtain $i\epsilon c_\zeta(\gamma) = \zeta^2 \otimes \id,$ and this endomorphism has trace $4 \zeta^2 = (2\zeta)^{[3]}.$

The fact that the trace of the tensor product of two linear maps is given by the product of the respective traces yields the statement for arbitrary $n.$ 
\end{proof}

\begin{remark} \label{rem:PolyMatrix}
Note that the entries of $c_\zeta(u)$ are polynomial in $\zeta.$ Hence for fixed $u \in \U(\hc),$ the map $\zeta \mapsto c_\zeta(u)$ is an entire holomorphic $\mathcal H$-valued function on $\CC.$
\end{remark}

Now we use the above construction to define representations of the Heisenberg--Clifford Lie supergroup.

\begin{definition} \label{def:SpinRep}
Let $\mathcal S, \mathcal H$ and $c_\zeta$ be as in Proposition~\ref{prop:AlgIso}, and $(\cdot, \cdot)_\zeta$ as in Lemma~\ref{lem:Tracec}. For every $\zeta \in \CC$ we define a representation $\pi_\zeta= (\pi_{\zeta,0}, d\pi_\zeta)$ of $\HC$ on $S$ by $\pi_{\zeta,0}(x) = e^{i x \zeta}$ for $x \in \RR$ and $d\pi_\zeta(v) = e^{i\pi/4}c_\zeta(v)$ for $v \in \hc.$

We also denote by $\pi_\zeta$ the element of $\C^\infty(\HC,\mathcal H)$ corresponding to $(\pi_{\zeta, 0},d\pi_{\zeta})$ by Proposition~\ref{prop:Rep}, which is given by $\pi_{\zeta}(u;x) = e^{i\zeta x} d\pi_{\zeta}(u)$ for $x \in \RR$ and $u \in \U(\hc).$ If $\zeta >0$ is real, then the representation $\pi_\zeta$ is unitary on the super Hilbert space $(\mathcal S, (\cdot, \cdot)_\zeta).$ 
\end{definition}

\begin{remark} \label{rem:Tracepi}
The element $\gamma$ acts by $d\pi_{\zeta}(\gamma) = (e^{i\pi/4})^n c_\zeta(\gamma)$ Together with Lemma~\ref{lem:Tracec} this implies
\[
T(d\pi_\zeta(\gamma))= (2\zeta)^{[n]},
\]
and $T(d\pi_\zeta(a_I))=0$ if $I \subsetneq \underline n.$ 
\end{remark}


\section{The Fourier Transform} \label{sct:FT}



Throughout the remainder of this article, we let $\mathcal S$ and $\mathcal H$ be as in Proposition~\ref{prop:AlgIso}, and for $\zeta \in \CC,$ we let $(\pi_\zeta, \mathcal S)$ be the representation of $\HC$ defined in~\ref{def:SpinRep}. By Proposition~\ref{prop:Rep} there is a unique element of $\C^\infty(\HC,\mathcal H)$ corresponding to $\pi_\zeta,$ which we denote by the same letter. We use the $\mathcal H$-valued function $\pi_\zeta$ to define the Fourier transform of a compactly supported smooth function on $\HC.$
\begin{definition} \label{def:FT}
If $F \in \C^\infty_c(\HC),$ we define the \emph{Fourier transform} $\mathcal F(F)$ or $\widehat F$ by 
\[
\mathcal F(F): \CC \rightarrow \mathcal H, \quad
\mathcal F(F)(\zeta) := \widehat F(\zeta) := \int_{\HC} F \cdot \pi_{-\zeta}.
\]
This is well-defined since it is the integral of a compactly supported smooth function with values in the finite-dimensional complex vector space $\mathcal H.$
\end{definition}

\begin{remark}
The Fourier transform of $F \in \C^\infty_c(\HC)$ can be computed explicitly as
\begin{align*}
\langle F,\pi_{-\zeta} \rangle &= \sum_i \int_\RR F(\gamma_i^{(1)};x)
\pi_{-\zeta}(\gamma_i^{(2)};x) \, \mathrm{d} x \\
&= \sum_i \int_\RR F(\gamma_i^{(1)};x)
 e^{-i\zeta x} d\pi_{-\zeta}(\gamma_i^{(2)}) \, \mathrm{d} x.
\end{align*}
Here, we have used that representations have even parity, and that $\pi_{-\zeta}(u;x) = \pi_{-\zeta,0}(x) d\pi_{-\zeta}(u) = e^{-i\zeta x} d\pi_{-\zeta}(u).$ We conclude that
\begin{equation} \label{eq:FT}
\widehat F(\zeta) = \sum_{i} F(\gamma_i^{(1)}) \widehat \; (\zeta) \, d\pi_{-\zeta}(\gamma_i^{(2)}),
\end{equation}
where $F(\gamma_i^{(1)}) \widehat{}\;$ is the classical Fourier transform of $F(\gamma_i^{(1)}) \in C_c^\infty(\RR).$ In particular, $\widehat F$ extends to an entire holomorphic $\mathcal H$-valued function.
\end{remark}

An immediate consequence of this remark is the following Fourier inversion formula.

\begin{proposition} \label{prop:FourierInversion}
For $F \in \C_c^\infty(\HC)$ we have
\[
F(1;x) = \frac{1}{2\pi} \int_\RR T(\widehat F(\zeta))(-2\zeta)^{-[n]} e^{i x \zeta} \, \mathrm{d} \zeta.
\]
\end{proposition}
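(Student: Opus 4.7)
The plan is to reduce the statement to the classical Fourier inversion formula applied to the scalar function $F(1) \in C_c^\infty(\RR)$. The key observation is that the trace $T$, when applied to $\widehat F(\zeta)$, picks out essentially only one term from the sum~\eqref{eq:FT}, thanks to the vanishing properties of $T \circ d\pi_{-\zeta}$ recorded in Remark~\ref{rem:Tracepi}.

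More concretely, I would first rewrite equation~\eqref{eq:FT} using the explicit form of the coproduct on $\gamma$ from Remark~\ref{rem:deltagamma}. This gives
\[
\widehat F(\zeta) = \sum_{I \subset \underline n} F(a_I)\widehat{}\;(\zeta)\, d\pi_{-\zeta}(*a_I),
\]
where $*a_I = \mathrm{sgn}(\sigma_I)\, a_{I^c}$. Applying the trace $T$ termwise and invoking Remark~\ref{rem:Tracepi}, which says $T(d\pi_{-\zeta}(a_J)) = 0$ unless $J = \underline n$ and $T(d\pi_{-\zeta}(\gamma)) = (-2\zeta)^{[n]}$, only the index $I = \emptyset$ survives: for $I = \emptyset$ we have $*a_\emptyset = \gamma$ (by the defining sign convention $a_\emptyset \cdot (*a_\emptyset) = \gamma$, with $a_\emptyset = 1$). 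This produces
\[
T(\widehat F(\zeta)) = F(1)\widehat{}\;(\zeta) \cdot (-2\zeta)^{[n]},
\]
so that $T(\widehat F(\zeta)) (-2\zeta)^{-[n]} = F(1)\widehat{}\;(\zeta)$ as a meromorphic identity (the pole at $\zeta=0$ being removable since the left side is entire).

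Finally, since $F(1) \in C_c^\infty(\RR)$, the classical Fourier inversion formula gives
\[
F(1;x) = \frac{1}{2\pi} \int_\RR F(1)\widehat{}\;(\zeta)\, e^{ix\zeta} \, \mathrm{d}\zeta,
\]
which, after substituting the identification above, is exactly the claimed formula. The only step requiring any care is the observation about the removable singularity at $\zeta = 0$ on the right-hand side — the product $T(\widehat F(\zeta))(-2\zeta)^{-[n]}$ is to be interpreted as the entire function $F(1)\widehat{}\;(\zeta)$, and no genuine analytic difficulty arises. Thus there is no real obstacle: the entire proof is a short bookkeeping exercise, the essential input being the selection rule $T(d\pi_{-\zeta}(a_I)) = 0$ for $I \subsetneq \underline n$ combined with the explicit form of $\Delta(\gamma)$.
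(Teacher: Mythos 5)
Your proposal is correct and follows exactly the paper's own argument: apply $T$ to the sum in equation~\eqref{eq:FT}, use the selection rule from Remark~\ref{rem:Tracepi} to see that only the term with $\gamma_i^{(2)}=\gamma$ (equivalently $I=\emptyset$) survives, obtain $T(\widehat F(\zeta)) = F(1)\widehat{\;}(\zeta)(-2\zeta)^{[n]}$, and conclude by classical Fourier inversion. Your extra remark about the removable singularity at $\zeta=0$ is a harmless clarification the paper leaves implicit.
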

\begin{proof}
We apply $T$ to the sum in equation~\eqref{eq:FT}.  By Remark \ref{rem:Tracepi}, only the summand with $\gamma_i^{(2)} = \gamma$ contributes. Then, $\gamma_i^{(1)} =1$ and we obtain
\[
T(\widehat F(\zeta)) = F(1) \widehat \; (\zeta) T(d\pi_{-\zeta}(\gamma)) = F(1) \widehat \; (\zeta) (-2 \zeta)^{[n]},
\]
so that the claim follows from the classical Fourier inversion formula.
\end{proof}

\begin{theorem} \label{thm:Intertwine}
The Fourier transform satisfies
\[
(L_{u;x}F) \widehat \; (\zeta) = \pi_{-\zeta}(u;x) \widehat F(\zeta)
\]
for all $x \in \RR$ and $u \in \U(\hcn).$ 
\end{theorem}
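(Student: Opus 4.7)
My plan is to reduce the theorem to a verification on generators. Since $L_{u;x} = L_u \circ L_x$ (by the definition in Section~\ref{sct:Prelim}) and $\pi_{-\zeta}(u;x) = \pi_{-\zeta,0}(x)\, d\pi_{-\zeta}(u)$ factor into translation and Lie-algebra parts, while both $u \mapsto L_u$ (by the Lemma following Definition~\ref{def:InvInt}) and $u \mapsto d\pi_{-\zeta}(u)$ are algebra representations of $\U(\hc)$, it is enough to verify the identity for (a) pure translations $L_x$, (b) the central generator $L_z$, and (c) the odd generators $L_{a_i}$ for $i = 1, \ldots, n$.

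Cases (a) and (b) are essentially classical. For (a), I would use the formula $\widehat F(\zeta) = \sum_i F(\gamma_i^{(1)})\widehat{\;}(\zeta)\, d\pi_{-\zeta}(\gamma_i^{(2)})$ from the Remark following Definition~\ref{def:FT}: the shift $y \mapsto y - x$ in each scalar integrand produces a common phase $e^{-i\zeta x} = \pi_{-\zeta}(1;x)$ that factors out of the sum. For (b), unravelling the definitions gives $L_z F = \partial_y F$, using $zf = -f'$ together with $S(z) = -z$, so the claim reduces to the classical identity $(\partial_y F)\widehat{\;}(\zeta) = i\zeta\, F\widehat{\;}(\zeta)$ and the equality $d\pi_{-\zeta}(z) = i\zeta$ that comes out of the $\g_0$-linearity of $\pi_\zeta$.

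The main case is (c). Expanding both $(L_{a_i}F)\widehat{\;}(\zeta)$ and $d\pi_{-\zeta}(a_i)\widehat F(\zeta)$ via $\Delta(\gamma) = \sum_I a_I \otimes *a_I$ from Remark~\ref{rem:deltagamma}, their difference can be expressed, up to parity signs, as $\int_\RR (F\cdot \pi_{-\zeta})(a_i \gamma; y)\,dy$. The crucial observation is that $a_i \gamma = (-1)^{i-1}\, z \cdot a_{\underline n \setminus \{i\}}$ lies in $z \cdot \U(\hc)$, so $\g_0$-linearity yields $(F\cdot \pi_{-\zeta})(a_i \gamma; y) = \pm \partial_y\, (F\cdot \pi_{-\zeta})(a_{\underline n\setminus\{i\}}; y)$, a total $y$-derivative of a compactly supported $\mathcal H$-valued function, which integrates to zero over $\RR$. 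Combined with the expansion, this forces the two sides of (c) to agree.

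The main obstacle will be tracking the signs arising from the super Hopf algebra structure --- the graded coproduct of $\gamma$, the antipode $S$, and the grading of $\mathcal H$ --- together with the parity-dependent vanishing $F(u) = 0$ when $|u| \neq |F|$. These signs conspire with the parity constraint on $F$ to produce the sign-free statement of the theorem; careful bookkeeping in the Poincar\'e--Birkhoff--Witt basis is required to verify the cancellation. A more structural alternative uses the invariance of the pairing, $\langle L_{u;x}F,\pi_{-\zeta}\rangle = (-1)^{|F||u|}\langle F, L_{S(u);-x}\pi_{-\zeta}\rangle$, after computing $L_{S(u);-x}\pi_{-\zeta}(v;y) = \pi_{-\zeta}(u;x)\pi_{-\zeta}(v;y)$ from translation combined with $S^2 = \id$ and the representation property of $\pi_{-\zeta}$; but essentially the same sign analysis reappears when pulling the constant matrix $\pi_{-\zeta}(u;x) \in \mathcal H$ out of the $\mathcal H$-valued pairing.
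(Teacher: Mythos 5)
Your proposal is correct, but your primary route is organized quite differently from the paper's proof, which is a uniform three-step argument with no reduction to generators: by the invariance of the pairing (equation~\eqref{eq:Invariance}), $\langle L_{u;x}F,\pi_{-\zeta}\rangle = (-1)^{|u||F|}\langle F,L_{S(u);-x}\pi_{-\zeta}\rangle$; by the characterization of representations in Proposition~\ref{prop:Rep}, $L_{S(u);-x}\pi_{-\zeta}(v;y)=\pi_{-\zeta}(uv;x+y)=\pi_{-\zeta}(u;x)\,\pi_{-\zeta}(v;y)$; and the constant matrix $\pi_{-\zeta}(u;x)\in\mathcal H$, of parity $|u|$, is pulled out of the $\mathcal H$-valued pairing at the cost of a Koszul sign $(-1)^{|u||F|}$ that cancels the first one. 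This is exactly the ``structural alternative'' you sketch in your closing paragraph, so you have in effect found the paper's proof and then set it aside. What the structural route buys is that your entire case (c) --- the expansion over $\Delta(\gamma)=\sum_I a_I\otimes *a_I$, the identity $a_i\gamma=(-1)^{i-1}z\,a_{\underline n\setminus\{i\}}$, and the vanishing of the resulting total $y$-derivative --- is already packaged in the invariance lemma, which the paper proves once by precisely that total-derivative argument and never reopens; likewise the multiplicativity you need both for the reduction to generators and for case (c) is packaged in the cocycle identity of Proposition~\ref{prop:Rep}. Your worry that ``essentially the same sign analysis reappears'' in the structural route oversells the difficulty: the only sign there is the single $(-1)^{|u||F|}$ from invariance, cancelled when the homogeneous matrix $\pi_{-\zeta}(u;x)$ is commuted past $F$ inside $\langle F,\cdot\rangle$, and no bookkeeping in a Poincar\'e--Birkhoff--Witt basis is required. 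Both routes are valid; the generator-by-generator one is more elementary but makes you re-derive, separately for $u=z$ and $u=a_i$, facts the paper has already established in the required generality.
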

\begin{proof}
Invariance of the integral implies
\[
(L_{u;x}F) \widehat \; (\zeta) =\langle L_{u;x}F, \pi_{-\zeta} \rangle =
(-1)^{|u||F|} \langle F,L_{S(u);-x} \pi_{-\zeta} \rangle.
\]
We have 
\[
L_{S(u);-x}\pi_{-\zeta}(v,y) = \pi_{-\zeta}(uv; x+y) =  \pi_{-\zeta,0}(x+y) d\pi_{-\zeta}(uv) = \pi_{-\zeta}(u;x) \pi_{-\zeta}(v;y),
\]
by equation~\eqref{eq:Rep} and because $\pi_{-\zeta,0}(y)$ commutes with $d\pi_{-\zeta}(u).$ This implies
\[
\langle L_{u;x} F, \pi_{-\zeta} \rangle = (-1)^{|u||F|} \langle F, \pi_{-\zeta}(u;x) \pi_{-\zeta} \rangle = \pi_{-\zeta}(u;x) \langle F, \pi_{-\zeta} \rangle, 
\]
and hence the claim.
\end{proof}

\begin{remark}
Theorem~\ref{thm:Intertwine}, together with Proposition~\ref{prop:FourierInversion}, implies
\begin{align*}
F(u;x) &= (-1)^{|u||F|}(L_{S(u)}F)(1;x) \\
&= (-1)^{|u||F|}\frac{1}{2\pi} \int_\RR T(\mathcal F(L_{S(u)}F)(\zeta)) (-2\zeta)^{-[n]} e^{i x \zeta} \, \mathrm{d} \zeta \\
&= (-1)^{|u||F|}\frac{1}{2\pi} \int_\RR T(d\pi_{-\zeta}(S(u)) \widehat F(\zeta)) (-2\zeta)^{-[n]} e^{i x \zeta} \, \mathrm{d} \zeta.
\end{align*}
Recall that $d\pi(S(u))=d\pi(u)^\dagger$ for unitary representations $\pi=(\pi_0, d\pi).$ Therefore, using $\langle A|B \rangle = T(AB^\dagger)$ we can write
\begin{align*} 
F(u;x) &= (-1)^{|u||F|}\frac{1}{2\pi} \int_\RR T(d\pi_{-\zeta}(u)^{\dagger}\widehat F(\zeta)) (-2\zeta)^{-[n]} e^{i x \zeta} \, \mathrm{d} \zeta \\
&= \frac{1}{2\pi} \int_\RR T(\widehat F(\zeta) d\pi_{-\zeta}(u)^\dagger) (-2\zeta)^{-[n]} e^{i x \zeta} \, \mathrm{d} \zeta \\
&= \frac{1}{2\pi} \int_\RR \langle \widehat F(\zeta) |  d\pi_{-\zeta}(u) \rangle (-2\zeta)^{-[n]} e^{i x \zeta} \, \mathrm{d} \zeta.
\end{align*}
\end{remark}

\begin{definition}
\begin{enumerate}[a)]
\item If $A: \RR \rightarrow \mathcal H$ is any map, we define 
\[
A(u;\zeta) := \langle A(\zeta) | d\pi_{-\zeta}(u) \rangle  (-2\zeta)^{-[n]}
\]
for $u \in \U(\hc)$ and non-zero $\zeta \in \RR.$ Note that 
\[
A(zu;\zeta) = \langle A(\zeta) |  d\pi_{\zeta}(zu) \rangle  (2\zeta)^{-[n]} = (i\zeta) A(u; \zeta),
\]
since $\langle \cdot | \cdot \rangle $ is conjugate linear in the second argument.
\item We say that $A: \RR \rightarrow \mathcal H$ is of \emph{Schwartz class} if for all $u \in \U(\hc)$ and all $k \geq 0,$ the function $A(u;\zeta)$ is smooth on $\RR,$ and 
\[
s_{k,u}(A):=\sup_{\zeta \in \RR} \left| \frac{d^k}{d \zeta^k}  A(u;\zeta) \right| < \infty.
\]
The space of $\mathcal H$-valued functions of Schwartz class is denoted $\mathcal S(\RR, \mathcal H).$ 
\item If $A \in \mathcal S(\RR, \mathcal H),$ we define 
\begin{equation} \label{eq:Inversion}
\mathcal F^{-1}(A)(u;x) := \frac{1}{2\pi} \int_\RR A(u;\zeta) e^{i x \zeta}\,\mathrm{d} \zeta.
\end{equation}
\end{enumerate}
If $(a_i)_{i=1}^n$ is a basis of $V$ and we let $s_{k,j,J}:= s_{k,z^ja_J},$ then the countable family $(s_{k,j,J})$ defines a locally convex vector space topology on $\mathcal S(\RR, \mathcal H).$
\end{definition}

\begin{remark}
 If $A \in \mathcal S(\RR, \mathcal H),$ then $\mathcal F^{-1}(A)$ is an element of $\C^\infty(\HC),$ because
\[
\mathcal F^{-1}(A(zu))(x) = \mathcal F^{-1}(i\zeta A(u))(x) = \frac{\mathrm{d}}{\mathrm{d}x} \mathcal F^{-1}(A(u))(x).
\]
\end{remark}

\begin{lemma} \label{lem:SchwartzComponents}
A function $A: \RR \rightarrow \mathcal H$ is of Schwartz class if and only if for all $u \in \U(\hc),$ the function $x \mapsto A(u;x) $ is in $\mathcal S(\RR).$
\end{lemma}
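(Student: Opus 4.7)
The two implications will be handled separately. The forward implication is immediate: if $A(u;\cdot)\in\mathcal{S}(\RR)$ for every $u\in\U(\hc)$, then in particular $\frac{d^k}{d\zeta^k}A(u;\zeta)$ is bounded in $\zeta$ for each $k$, so $s_{k,u}(A)<\infty$ and $A$ is of Schwartz class. The substance lies in the reverse direction, where I need to upgrade uniform boundedness of all derivatives (over all $u\in\U(\hc)$) to full rapid decay.

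For the reverse implication, the essential tool is the identity $A(zu;\zeta)=(i\zeta)A(u;\zeta)$ recorded just above the lemma, which iterates to $A(z^ju;\zeta)=(i\zeta)^jA(u;\zeta)$ for every $j\ge 0$. Differentiating $k$ times and applying the Leibniz rule gives
\[
\frac{d^k}{d\zeta^k}A(z^ju;\zeta)=\sum_{l=0}^{\min(j,k)}\binom{k}{l}\,i^j\,\frac{j!}{(j-l)!}\,\zeta^{j-l}\,\frac{d^{k-l}}{d\zeta^{k-l}}A(u;\zeta),
\]
and the leading term $i^j\zeta^j\frac{d^k}{d\zeta^k}A(u;\zeta)$, corresponding to $l=0$, can be solved for in terms of the left-hand side and the lower-order contributions at $l\ge 1$. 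A simple induction on $j$, with base case $j=0$ furnished by the hypothesis $s_{k,u}(A)<\infty$ and inductive step using the induction hypothesis at exponents $j-l<j$ to bound each lower-order term $\zeta^{j-l}\frac{d^{k-l}}{d\zeta^{k-l}}A(u;\zeta)$, then yields $\sup_{\zeta\in\RR}\bigl|\zeta^j\frac{d^k}{d\zeta^k}A(u;\zeta)\bigr|<\infty$ for all $j,k$ and all $u$, which is precisely membership of $A(u;\cdot)$ in $\mathcal{S}(\RR)$.

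The only subtlety worth flagging is that the factor $(-2\zeta)^{-[n]}$ in the definition of $A(u;\zeta)$ is singular at $\zeta=0$; however, smoothness on all of $\RR$ is built into the definition of Schwartz class, so this apparent pole must cancel automatically and no separate argument is required at the origin. I do not expect any genuine obstacle: the conceptual point is that the relation $A(zu;\zeta)=i\zeta\,A(u;\zeta)$ converts multiplication by $z$ in $\U(\hc)$ into multiplication by $i\zeta$ on the spectral side, so that the family of scalar seminorms $s_{k,u}$ indexed over all $u\in\U(\hc)$ already encodes the full Schwartz seminorms.
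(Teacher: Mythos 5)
Your proof is correct and follows essentially the same route as the paper: both directions rest on the identity $A(z^ju;\zeta)=(i\zeta)^jA(u;\zeta)$ and the finiteness of the seminorms $s_{k,z^ju}(A)$. The only difference is that the paper stops at $\sup_\zeta\bigl|\tfrac{d^k}{d\zeta^k}\bigl(\zeta^jA(u;\zeta)\bigr)\bigr|<\infty$ and asserts the conclusion, whereas you explicitly carry out the Leibniz-rule induction showing this family of seminorms is equivalent to the standard one $\sup_\zeta\bigl|\zeta^j\tfrac{d^k}{d\zeta^k}A(u;\zeta)\bigr|$ --- a worthwhile detail to include.
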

\begin{proof}
Clearly, if all $A(u)$ are in $\mathcal S(\RR),$ then $s_{k,u}(A) < \infty$ for all $u \in \U(\hc), k \in \NN.$ Conversely, suppose that $A \in \mathcal S(\RR, \mathcal H).$ Fix $u \in \U(\hc).$ Then
\[
s_{k,z^ju}(A) = \sup_{\zeta \in \RR} \left| \frac{d^k}{d \zeta^k}  A(z^ju;\zeta) \right| = \sup_{\zeta \in \RR} \left| \frac{d^k}{d \zeta^k} \zeta^j  A(u;\zeta) \right| 
< \infty
\]
for all $k,j \in \NN,$ where we have used that $A(z^j u; \zeta) = (i\zeta)^j A(u;\zeta).$ It follows that $A(u) \in \mathcal S(\RR).$ 
\end{proof}

\begin{theorem} \label{thm:FourierInversion}
The Fourier transform is an isomorphism of the topological vector spaces $\mathcal S(\HC)$ and $\mathcal S(\RR, \mathcal H).$ Its inverse is given by~\eqref{eq:Inversion}.
\end{theorem}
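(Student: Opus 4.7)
The plan is to reduce the theorem to the classical Fourier inversion theorem on $\mathcal S(\RR)$, applied componentwise to the scalar components $F(u)$ of $F \in \mathcal S(\HC)$ and the components $A(u;\cdot)$ of $A \in \mathcal S(\RR,\mathcal H)$. First I would extend the Fourier transform to $\mathcal S(\HC)$ by the same formula $\widehat F(\zeta) = \int_\HC F \cdot \pi_{-\zeta}$ used in Definition~\ref{def:FT}. For $F \in \mathcal S(\HC)$ and $\zeta \in \RR$ the integrand $F \cdot \pi_{-\zeta}$ is, by Lemma~\ref{lem:PairExplicit}, a sum of products of Schwartz functions with the bounded oscillatory factors $e^{-i\zeta x}d\pi_{-\zeta}(\gamma_i^{(2)})$, hence integrable. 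The proofs of the invariance of $\int_\HC$ and of Theorem~\ref{thm:Intertwine} only require the relevant integrands to lie in $L^1(\RR)$ (integration by parts with a decaying function still yields zero), so both extend verbatim from $\C^\infty_c(\HC)$ to $\mathcal S(\HC)$.

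The pivotal identity I want to establish is
\begin{equation*}
\widehat F(u;\zeta) \;=\; F(u)\widehat{\;}(\zeta), \qquad u \in \U(\hc),\ \zeta \in \RR,
\end{equation*}
where the right-hand side is the classical Fourier transform on $\mathcal S(\RR)$. For $F \in \C^\infty_c(\HC)$ this is exactly the content of the final display in the remark preceding the theorem: the combination of Proposition~\ref{prop:FourierInversion} with the intertwining property yields $F(u;x) = \frac{1}{2\pi}\int_\RR \widehat F(u;\zeta)e^{ix\zeta}\,d\zeta$, and by uniqueness of classical Fourier inversion this identifies $\widehat F(u;\cdot)$ with $F(u)\widehat{\;}$. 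Proposition~\ref{prop:FourierInversion} extends to $\mathcal S(\HC)$ for the same reason — applying $T$ to the expansion~\eqref{eq:FT} of $\widehat F(\zeta)$ and using the trace selection rule of Remark~\ref{rem:Tracepi} (only the summand with $\gamma_i^{(2)}=\gamma$, $\gamma_i^{(1)}=1$ survives) gives $T(\widehat F(\zeta))=F(1)\widehat{\;}(\zeta)\,(-2\zeta)^{[n]}$, and $F(1) \in \mathcal S(\RR)$ so classical inversion applies. Combined with the already-extended Theorem~\ref{thm:Intertwine}, this proves the pivotal identity for all $F \in \mathcal S(\HC)$.

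With the pivotal identity in hand, the theorem essentially collapses. By definition $F \in \mathcal S(\HC)$ iff $F(u) \in \mathcal S(\RR)$ for every $u \in \U(\hc)$, and by Lemma~\ref{lem:SchwartzComponents} $A \in \mathcal S(\RR,\mathcal H)$ iff $A(u;\cdot) \in \mathcal S(\RR)$ for every $u$. Since the classical Fourier transform is a topological isomorphism of $\mathcal S(\RR)$, the pivotal identity immediately implies that $\mathcal F$ maps $\mathcal S(\HC)$ into $\mathcal S(\RR,\mathcal H)$; dually, the formula~\eqref{eq:Inversion} shows that $\mathcal F^{-1}(A)(u;\cdot)$ is literally the classical inverse Fourier transform of $A(u;\cdot) \in \mathcal S(\RR)$, so $\mathcal F^{-1}$ maps $\mathcal S(\RR,\mathcal H)$ into $\mathcal S(\HC)$. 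The identities $\mathcal F^{-1}\mathcal F = \id$ and $\mathcal F\mathcal F^{-1} = \id$ then reduce to classical Fourier inversion, applied one $u$ at a time. Continuity in both directions is automatic: after unwinding $L_uF(1;x) = \pm F(S(u);x)$, the seminorms $s_{j,u}$ on $\mathcal S(\HC)$ are classical Schwartz seminorms of the components $F(S(u))$, the seminorms $s_{k,u}$ on $\mathcal S(\RR,\mathcal H)$ are classical Schwartz seminorms of $A(u;\cdot)$, and the classical Fourier transform is continuous for these.

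The main obstacle is the verification of the pivotal identity for general $u \in \U(\hc)$. The remark preceding the theorem derives the scalar inversion formula through a graded-trace manipulation that implicitly uses $d\pi_{-\zeta}(u)^\dagger = d\pi_{-\zeta}(S(u))$, which literally holds only when $\pi_{-\zeta}$ is unitary, that is on a half-line of parameters. One must therefore either (i) appeal to polynomial dependence of the entries of $d\pi_{-\zeta}(u)$ on $\zeta$ (Remark~\ref{rem:PolyMatrix}) to analytically continue a scalar identity established on $\{\zeta < 0\}$ to all of $\RR$, or (ii) bypass the unitarity step altogether by substituting~\eqref{eq:FT} directly into the definition of $\widehat F(u;\zeta)$ and reading off $F(u)\widehat{\;}(\zeta)$ as the unique surviving summand under the trace-orthogonality relations of Lemma~\ref{lem:Tracec}(b). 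Either route is routine once laid out, but it is the one nontrivial algebraic point of the whole argument.
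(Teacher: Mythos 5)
Your proposal is correct and follows essentially the same route as the paper: both reduce the statement to the classical Fourier isomorphism of $\mathcal S(\RR)$ applied componentwise, via the identity $\widehat F(u;\zeta)=F(u)\,\widehat{\;}\,(\zeta)$ together with Lemma~\ref{lem:SchwartzComponents} and the characterization of the seminorms on both sides as classical Schwartz seminorms of the components. You are in fact more explicit than the paper about the two points its proof elides, namely the extension of Definition~\ref{def:FT} beyond compactly supported $F$ and the fact that $d\pi_{-\zeta}(u)^\dagger=d\pi_{-\zeta}(S(u))$ is only literally available on a half-line of parameters; your trace-orthogonality route (ii) disposes of the latter cleanly.
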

\begin{proof}
Let $F \in \mathcal S(\HC).$ Then
\begin{align*}
s_{k,j,J}(\widehat F) &= \sup_{\zeta \in \RR} \left| \frac{d^k}{d \zeta^k}  \widehat F(z^ja_J;\zeta) \right| = \sup_{\zeta \in \RR} \left| \frac{d^k}{d \zeta^k} \langle \widehat F(\zeta), d\pi_{-\zeta}(z^ja_J) \rangle \right| \\
&= \sup_{\zeta \in \RR} \left| \frac{d^k}{d \zeta^k} \left( \zeta^j \widehat F(a_{J} ; \zeta) \right) \right|,
\end{align*}
which is finite because $\widehat F(u) \in \mathcal S(\RR)$ for all $u \in \U(\hc).$ In particular, the last expression is continuous in $F,$ which proves that $\mathcal F: \mathcal S(\HC) \rightarrow \mathcal S(\RR, \mathcal H)$ is continuous. 

If $A \in \mathcal S(\RR, \mathcal H),$ then by Lemma~\ref{lem:SchwartzComponents}, the components $A(u)$ are in $\mathcal S(\RR).$ It follows that $\mathcal F^{-1}(A) \in \mathcal S(\HC).$ We apply the seminorms $s_{j,u},$ defined in~\ref{def:SchwartzSpace}, to obtain
\[
s_{j,u}(\mathcal F^{-1}(A)) = \sup_{x \in \RR} \left| x^j \mathcal F^{-1}(A)(u;x)\right|= \sup_{x \in \RR} \left| \ \mathcal F^{-1}\left( 
\frac{\mathrm{d^j}}{\mathrm{d} \zeta^j} A(u) \right) (x) \right|.
\]
The last expression involves the classical Fourier transform of the Schwartz function $(\mathrm{d^j}/ \mathrm{d} \zeta^j)(A(u))$ and is continuous in $A(u).$ This proves the continuity of $\mathcal F^{-1}.$ 
\end{proof}


To conclude this section, we define a Fourier--Laplace transform for compactly supported distributions on $\HC$ and prove a theorem of Paley--Wiener--Schwartz type.

\begin{definition}
If $U \in \mathcal E'(\HC)$ is a compactly supported distribution, we let
\[
\widehat U(\zeta) := \langle U,\pi_{-\zeta} \rangle.
\]
for $\zeta \in \RR.$
\end{definition}

\begin{theorem} \label{thm:FourierLaplace}
If $U \in \mathcal E'(\HC)$ we can extend the definition of $\widehat U(\zeta)$ to complex values of $\zeta.$ The function $\widehat U$ is an entire holomorphic function with values in $\mathcal H.$
\end{theorem}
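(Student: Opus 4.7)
The plan is to reduce the statement to the classical fact that the Fourier--Laplace transform of a compactly supported scalar distribution is entire, then combine this with the polynomial (and hence entire) dependence of $d\pi_{-\zeta}$ on $\zeta$. The key identity is the explicit formula from Lemma~\ref{lem:PairExplicit}: since $\pi_{-\zeta} \in \C^\infty(\HC,\mathcal H)$ and $U \in \mathcal E'(\HC)$, I would write
\[
\widehat U(\zeta) = \langle U, \pi_{-\zeta} \rangle = \sum_i (-1)^{|\gamma_i^{(1)}||\gamma_i^{(2)}|} \bigl\langle U(\gamma_i^{(1)}), \pi_{-\zeta}(\gamma_i^{(2)};\,\cdot\,) \bigr\rangle,
\]
where the right-hand side involves only the scalar compactly supported distributions $U(\gamma_i^{(1)}) \in \mathcal E'(\RR)$ from Remark~\ref{rem:Distributions}. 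Using $\pi_{-\zeta}(\gamma_i^{(2)};x) = e^{-i\zeta x}\,d\pi_{-\zeta}(\gamma_i^{(2)})$, this becomes a finite sum
\[
\widehat U(\zeta) = \sum_i (-1)^{|\gamma_i^{(1)}||\gamma_i^{(2)}|}\, \bigl\langle U(\gamma_i^{(1)}), e^{-i\zeta\,\cdot\,} \bigr\rangle \cdot d\pi_{-\zeta}(\gamma_i^{(2)}).
\]

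The second factor $d\pi_{-\zeta}(\gamma_i^{(2)})$ is polynomial in $\zeta$ with values in $\mathcal H$ by Remark~\ref{rem:PolyMatrix}, so it is an entire $\mathcal H$-valued function. The first factor is the classical Fourier--Laplace transform of the compactly supported distribution $U(\gamma_i^{(1)}) \in \mathcal E'(\RR)$ evaluated at $\zeta$. Its extension to all of $\CC$ and its entirety is standard: for each fixed $\zeta_0 \in \CC$, the difference quotients $\tfrac{1}{h}(e^{-i(\zeta_0+h)x} - e^{-i\zeta_0 x})$ converge to $-ix\, e^{-i\zeta_0 x}$ as $h \to 0$ in the Fr\'echet topology of $C^\infty(\RR)$ uniformly on every compact (which is all that matters, since $U(\gamma_i^{(1)})$ has compact support); applying the continuous linear functional $U(\gamma_i^{(1)})$ then produces the complex derivative and shows holomorphicity.

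Having established that each summand is an entire $\mathcal H$-valued function of $\zeta$ and that the sum is finite, I conclude that $\widehat U$ extends to an entire $\mathcal H$-valued function on $\CC$, matching the original definition on $\RR$. The finite-dimensionality of $\mathcal H$ ensures that there is no subtlety about which notion of holomorphy to use (componentwise holomorphy and strong holomorphy coincide). I do not expect a genuine obstacle here; the main care is bookkeeping --- making sure that the Sweedler decomposition of $\Delta(\gamma)$ from Remark~\ref{rem:deltagamma} and the pairing formula from Lemma~\ref{lem:PairExplicit} reduce the supergeometric statement cleanly to the classical one, and that the $\mathcal H$-valued compactly supported distribution $U$ actually acts on the smooth $\mathcal H$-valued function $\pi_{-\zeta}$ componentwise as asserted.
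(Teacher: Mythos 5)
Your proposal is correct and follows essentially the same route as the paper: decompose $\langle U,\pi_{-\zeta}\rangle$ via the scalar compactly supported distributions $U(\gamma_i^{(1)})$, factor $\pi_{-\zeta}(\gamma_i^{(2)};x)=e^{-i\zeta x}\,d\pi_{-\zeta}(\gamma_i^{(2)})$, and invoke the entirety of the classical Fourier--Laplace transform together with Remark~\ref{rem:PolyMatrix}. The only cosmetic difference is that you prove the classical holomorphy by a difference-quotient argument where the paper cites H\"ormander's Theorem 7.1.14.
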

\begin{proof}
We write
\[
\langle U,F \rangle  = \sum_i (-1)^{|\gamma_i^{(1)}||F|} \langle U(\gamma_{i}^{(1)}), F(\gamma_{i}^{(2)}) \rangle
\]
for appropriate distributions $U(\gamma_{i}^{(1)}) \in \mathcal E'(\RR).$ Then
\[
\widehat U(\zeta) = U(\pi_{-\zeta}) = \sum_i U(\gamma_{i}^{(1)})_x (\pi_{-\zeta}(\gamma_i^{(2)};x)) = \sum_i U(\gamma_{i}^{(1)})_x(e^{-i\zeta x}) d\pi_{-\zeta}(\gamma_i^{(2)}).
\]
Here $U(\gamma_{i}^{(1)})_x(e^{-i\zeta x})$ is the classical Fourier--Laplace transform of the compactly supported distribution $U(\gamma_i^{(1)}).$ In particular, $U(\gamma_{i}^{(1)})_x(e^{-i\zeta x})$ extends to an entire holomorphic function of $\zeta$ (see~\cite[Theorem 7.1.14]{Hormander}). The same is true for the matrices $d\pi_{-\zeta}(\gamma_i^{(2)})$ by Remark~\ref{rem:PolyMatrix}, hence the claim follows.
\end{proof}

Now we formulate a  Paley--Wiener--Schwartz theorem, which characterizes the Fourier transformation of a compactly supported function by a growth condition on the `components'  $A(u)$ of $A = \mathcal F(F): \CC \rightarrow \mathcal H.$ 

\begin{definition}
a) Let $\C_{[-a,a]}^\infty(\HC)$ denote the space of functions $F \in \C_c^\infty(\HC)$ for which the support of $F(u)$ is contained in the compact interval $[-a,a]$ for all $u \in \U(\hc).$ 

b) We say that an entire holomorphic function $A: \CC \rightarrow \mathcal H$ is \emph{of exponential type $a,$} where $a$ is a positive real number, if for every $N \in \NN$ there is a constant $C_N$ such that
\begin{equation} \label{eq:growth}
|\zeta^{-[n]}T(A(\zeta) d\pi_{-\zeta}(u))| \leq C_N (1+|\zeta|)^{-N} e^{a \mathrm{Im}(\zeta)} 
\end{equation}
for all $u \in \U(\hc), \, \zeta \in \CC$.
\end{definition}

\begin{theorem} \label{thm:PaleyWienerSchwartz}
The Fourier--Laplace transform is a bijection between the space $\C_{[-a,a]}^\infty(\HC)$ and the space of entire holomorphic functions $A: \CC \rightarrow \mathcal H$ of exponential type $a.$ 
\end{theorem}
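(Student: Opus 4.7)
The plan is to reduce the theorem, component-by-component, to the classical scalar Paley--Wiener--Schwartz theorem (e.g.~\cite[Theorem~7.3.1]{Hormander}). The key structural ingredient is that, by Proposition~\ref{prop:AlgIso}, the operators $\{d\pi_{-\zeta}(u):u\in\U(\hc)\}$ span $\mathcal H$ for every $\zeta\neq 0$, so that, using the non-degenerate pairing on $\mathcal H$ (Lemma~\ref{lem:Tracec}), any element of $\mathcal H$ is uniquely recovered from its traces against them.

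For the forward direction, fix $F\in\C^\infty_{[-a,a]}(\HC)$, so each $F(u)\in C^\infty_c(\RR)$ is supported in $[-a,a]$. Using \eqref{eq:FT} and the trace identities of Remark~\ref{rem:Tracepi}, I expand
\[
T\bigl(\widehat F(\zeta)\,d\pi_{-\zeta}(u)\bigr)=\sum_i F(\gamma_i^{(1)})\widehat{\;}(\zeta)\,T\bigl(d\pi_{-\zeta}(\gamma_i^{(2)}u)\bigr).
\]
Only summands in which $\gamma_i^{(2)}u$ is proportional to $z^k\gamma$ contribute, and for these $T(d\pi_{-\zeta}(z^k\gamma))$ is a monomial in $\zeta$ of degree $k+[n]$. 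Hence $\zeta^{-[n]}T(\widehat F(\zeta)\,d\pi_{-\zeta}(u))$ is a finite $\CC$-linear combination of terms $p(\zeta)\,F(v)\widehat{\;}(\zeta)$ with $v\in\U(\hc)$ and $p\in\CC[\zeta]$. Each factor $F(v)\widehat{\;}(\zeta)$ satisfies the classical PWS bound $C_N(1+|\zeta|)^{-N}e^{a|\mathrm{Im}(\zeta)|}$, and the polynomial factor is absorbed into the arbitrary polynomial decay, yielding the desired exponential type $a$ bound.

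For the reverse direction, let $A:\CC\to\mathcal H$ be entire of exponential type $a$. For each $u\in\U(\hc)$ the growth hypothesis shows that $\zeta\mapsto\zeta^{-[n]}T(A(\zeta)d\pi_{-\zeta}(u))$ is in fact entire on $\CC$ (the potential pole at $\zeta=0$ is excluded by the uniform bound), of exponential type $a$ and rapidly decreasing along $\RR$. The scalar PWS theorem then produces, for each $u$, a unique $F_u\in C^\infty(\RR)$ supported in $[-a,a]$ whose classical Fourier--Laplace transform equals a suitable scalar multiple of this component. I set $F(u;x):=F_u(x)$ and verify (i) linearity in $u$; (ii) the $\g_0$-equivariance relating $F(zu)$ and $F(u)'$, which follows from the identity between $A(zu;\zeta)$ and $\zeta\,A(u;\zeta)$ derived in the paper combined with the classical Fourier correspondence between differentiation and multiplication by $\zeta$; (iii) support in $[-a,a]$. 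This shows $F\in\C^\infty_{[-a,a]}(\HC)$. To check $\widehat F=A$, note that by construction both have matching components against every $d\pi_{-\zeta}(u)$ for $\zeta\neq 0$, and the structural remark above forces $\widehat F(\zeta)=A(\zeta)$ there; entirety extends the equality to $\zeta=0$. Injectivity of the transform is a direct consequence of Proposition~\ref{prop:FourierInversion} applied to $L_{S(u)}F$ via the intertwining relation of Theorem~\ref{thm:Intertwine}.

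The most delicate step is the reverse direction: I must know that the matrix-valued growth estimate genuinely forces each scalar component $\zeta^{-[n]}T(A(\zeta)d\pi_{-\zeta}(u))$ to be \emph{entire} rather than merely meromorphic with a pole at $\zeta=0$, and that the pointwise assembly of the scalar PWS preimages across the infinite family $u\in\U(\hc)$ produces a single $\g_0$-equivariant homomorphism $\U(\hc)\to C^\infty(\RR)$ rather than an unstructured family of scalar functions. Once again, Proposition~\ref{prop:AlgIso} (guaranteeing that the map $u\mapsto d\pi_{-\zeta}(u)$ is onto $\mathcal H$ for fixed $\zeta\neq 0$), together with continuity of the classical PWS isomorphism, is what makes the assembly well-defined and uniform in $u$.
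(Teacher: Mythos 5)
Your proposal is correct and follows essentially the same route as the paper: both directions reduce, via the expansion $\widehat F(\zeta)=\sum_I F(a_I)\widehat{\;}(\zeta)\,d\pi_{-\zeta}(*a_I)$ and the trace identities of Lemma~\ref{lem:Tracec}, to the classical scalar Paley--Wiener--Schwartz theorem applied componentwise. The paper's version is terser --- it works only with the basis components $a_I$ and lets the coordinate description of Proposition~\ref{prop:SheafAlg} handle the reassembly --- but your added care about the removable singularity at $\zeta=0$ and the $\z$-equivariance of the assembled $F$ merely makes explicit what the paper leaves implicit.
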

\begin{proof}
Let $F \in \C^\infty(\HC).$ We use coordinates, so that 
\[\widehat F(\zeta) = \sum_{I \subset \underline n} F(a_I) \widehat \; (\zeta) d\pi_{-\zeta}(*a_I),
\] 
and therefore
\[
|\zeta^{-[n]}T(A(\zeta) d\pi_{-\zeta}(a_I)))| = C |F(a_I) \widehat \; (\zeta)|
\]
for some positive constant $C.$ By the corresponding classical Paley--Wiener--Schwartz theorem~\cite[Theorem 7.3.1]{Hormander}, this shows that if $\mathrm{supp}(F(a_I)) \subset [-a,a],$ for all $I \subset \underline n,$ then $\mathcal F(F)$ is of exponential type $a.$

Conversely, if $A: \CC \rightarrow \mathcal H$ is of exponential type $a,$ then the classical theorem implies that the components $A(a_I)$ are the Fourier transforms of smooth functions $F(a_I)$ with support in $[-a,a].$ The $F(a_I)$ now uniquely determine an element $F \in \C^\infty_{[-a,a]}(\HC)$ with Fourier transform equal to $A,$ and this concludes the proof.
\end{proof}


\section{The Convolution Product} \label{sct:Conv}

In this section we study the convolution product for functions and distributions on $\HC.$  We prove basic properties of the convolution product, and in Theorem~\ref{thm:ConvFT} we show that the Fourier transform interchanges the convolution product and the pointwise product of $\mathcal H$-valued functions. Lastly, we obtain a Banach convolution algebra as the completion of $\C^\infty_c(\HC)$ with respect to a Sobolev norm.

\begin{definition} \label{def:Convolution}
Let $F, G \in \C^\infty(\HC)$ and assume that one of $F,G$ is compactly supported. We define the \emph{convolution of $F$ and $G$} by 
\[
(F*G)(u;x) := (-1)^{|u|(|G|+ |\gamma|)}\langle F, L_{u;x}i^*G \rangle.
\]
\end{definition}

\begin{proposition} \label{prop:ConvBasics}
\begin{enumerate}[a)]
\item Suppose that one of $F, G \in \C^\infty(\HC)$ has compact support. Then the convolution product $F*G$ is in $\C^\infty(\HC).$ 
\item $\supp(F*G) \subset \supp \, F+\supp \, G$
\item $L_{u;x}(F*G) = (L_{u;x}F)*G$ for all $u \in \U(\hc), \; x \in \RR.$
\item  \begin{equation*}
(F*G)(u) = \sum_i (-1)^{|\gamma_i^{(1)}||\gamma_i^{(2)}|} F(\gamma_i^{(1)})*G(S(\gamma_i^{(2)})u).
\end{equation*}
\end{enumerate}
\end{proposition}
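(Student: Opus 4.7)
My plan is to reduce the super convolution to a finite sum of classical convolutions on $\RR$ via Lemma~\ref{lem:PairExplicit}, which will give (a), (b) and (d) at once, and to handle (c) separately by invariance of the pairing. The natural logical order is (d) $\Rightarrow$ (a), (b); and (c) on its own.

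The crux is part (d). The strategy is to explicitly unfold $\langle F, L_{u;x}i^*G\rangle$. First I would compute
\[
(L_{u;x}i^*G)(v;y) = (-1)^{|u||G|+|u||v|}\,G\bigl(S(v)u;\,x-y\bigr),
\]
using the definitions of $L_u$, $L_x$, $i^*$ together with the key identity $S(S(u)v)=(-1)^{|u||v|}S(v)u$ (which follows from $S$ being an involutive super-antiautomorphism on $\U(\hc)$). Next, applying Lemma~\ref{lem:PairExplicit} with $v=\gamma_j^{(2)}$ rewrites the pairing as a finite sum of ordinary integrals on $\RR$, each of which is precisely the classical convolution $F(\gamma_j^{(1)})*G(S(\gamma_j^{(2)})u)$ evaluated at $x$. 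Finally, I would collect signs: the combined factor from the Koszul rule in $F\cdot H$, from the rewriting of $L_{u;x}i^*G$, and from the prefactor $(-1)^{|u|(|G|+|\gamma|)}$ in the definition of convolution simplifies using the parity constraints on nonzero Sweedler terms ($|\gamma_j^{(1)}|=|F|$ and $|\gamma_j^{(2)}|=|G|+|u|$, since $C^\infty(\RR)$ is purely even) to yield exactly the sign $(-1)^{|\gamma_j^{(1)}||\gamma_j^{(2)}|}$ of the claim.

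Once (d) is in hand, (a) and (b) are immediate. For (a), each summand is a classical convolution of a smooth function with a smooth compactly supported function on $\RR$, hence smooth in $x$; $\g_0$-linearity amounts to verifying $(F*G)(zu;x)=-\tfrac{d}{dx}(F*G)(u;x)$, which reduces via (d) and the $\g_0$-linearity of $G$ to the classical identity $f*g'=(f*g)'$. For (b), each summand $F(\gamma_j^{(1)})*G(S(\gamma_j^{(2)})u)$ has support in $\supp F(\gamma_j^{(1)})+\supp G(S(\gamma_j^{(2)})u)\subset \supp F+\supp G$, and this bound is uniform in $u$. For (c), I would not use the classical picture but would compute both sides directly: expand $((L_{v;y}F)*G)(u;x)$ via Definition~\ref{def:Convolution} and apply the invariance equation~\eqref{eq:Invariance} to move $L_{v;y}$ to the other argument, then use that $L$ is a representation of $\U(\hc)$ and that $L_x$ commutes with $L_u$, obtaining $L_{S(v);-y}L_{u;x}=L_{S(v)u;\,x-y}$; comparing with $L_{v;y}(F*G)(u;x)=(-1)^{|v||F*G|}(F*G)(S(v)u;x-y)$ expanded by Definition~\ref{def:Convolution}, the two expressions agree once one uses $|F*G|=|F|+|G|+|\gamma|$.

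The main obstacle I expect is sign bookkeeping throughout (d): parities of $F$, $G$, $u$, $v$, $\gamma$, $\gamma_j^{(1)}$, $\gamma_j^{(2)}$ all contribute to the Koszul signs, and one has to be disciplined about using the parity constraints enforced by $C^\infty(\RR)$ being purely even to collapse expressions like $(-1)^{|u||G|+|u||\gamma_j^{(2)}|}$ to $(-1)^{|u|^2}=(-1)^{|u|}$ on nonzero summands. Everything else is essentially a combination of known Sweedler/Hopf-algebra manipulations with classical convolution calculus, where the only analytic input is that smooth compactly supported functions are closed under classical convolution.
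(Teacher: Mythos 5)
Your proposal follows the paper's proof essentially verbatim: part (d) is obtained by expanding $\langle F, L_{u;x}i^*G\rangle$ via Lemma~\ref{lem:PairExplicit} together with the identity $(L_{u;x}i^*G)(v;y)=(-1)^{|u||G|+|u||v|}G(S(v)u;x-y)$, parts (a) and (b) then reduce to classical facts about convolution on $\RR$, and part (c) is proved by invariance of the pairing, exactly as in the paper. The one caveat --- a careful collection of signs on the nonzero Sweedler terms gives $(-1)^{|u||F|+|\gamma_i^{(1)}||\gamma_i^{(2)}|}$ rather than $(-1)^{|\gamma_i^{(1)}||\gamma_i^{(2)}|}$, the two agreeing only when $|u||F|=0$ --- is equally present in the paper's own final step, so it is not a defect of your argument relative to the paper's.
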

\begin{proof}
To check that $F*G \in \C^\infty(\HC)$ we need to verify $\z$-linearity. This follows easily from
\[
F(\gamma_i^{(1)}) * G(S(\gamma_i^{(2)})zu) = F(\gamma_i^{(1)}) * G'(S(\gamma_i^{(2)})u)
= (F(\gamma_i^{(1)}) * G(S(\gamma_i^{(2)})u))'.
\]

The inclusion of supports in $b)$ follows easily as in the classical case.

Statement $c)$ follows easily from the invariance of the pairing:
\begin{align*}
L_{u;x}(F*G)(v;y) &= (-1)^{|u|(|F|+|G|+|\gamma|)} (F*G)(S(u)v; y-x) \\
&= (-1)^{|u||F|+|v|(|G|+|\gamma|)} \langle F, L_{S(u)v; y-x} i^*G \rangle \\
&= (-1)^{|v|(|G|+|\gamma|)} \langle L_{u;x}F, L_{v;y}i^* G \rangle \\
&= ((L_{u;x}F)*G)(v;y).
\end{align*}

We compute $F*G$ as follows.
\begin{align*}
(F*G)&(u;x) =\\
&= (-1)^{|u|(|G|+|\gamma|)} \sum_i (-1)^{|\gamma_i^{(1)}|(|u|+|G|)} \int_\RR F(\gamma_i^{(1)};y) L_{u;x}i^*G (\gamma_i^{(2)};y) \, \mathrm{d}y \\
&= (-1)^{|u||\gamma|} \sum_i (-1)^{|\gamma_i^{(1)}|(|u|+|G|)} \int_\RR F(\gamma_i^{(1)};y)(i^*G)(S(u)\gamma_i^{(2)};y-x) \, \mathrm{d}y \\
&= (-1)^{|u||\gamma|} \sum_i (-1)^{|u||\gamma|+|\gamma_i^{(1)}||G|} \int_\RR F(\gamma_i^{(1)};y)G(S(\gamma_i^{(2)})u;x-y) \, \mathrm{d}y\\ 
&= \sum_i (-1)^{|\gamma_i^{(1)}||G|}(F(\gamma_i^{(1)})*G(S(\gamma_i^{(2)})u))(x)
\end{align*}
Now the claim follows, since the summands are non-zero only if $|G| = |\gamma_i^{(2)}|.$
\end{proof}

\begin{remark} \label{rem:Coord}
Choose coordinates and consider the elements $F = f \otimes \xi^I$ and $G = g \otimes \xi^J$ of $\C^\infty_c(\HC).$ We compute $F * G$ using Proposition~\ref{prop:ConvBasics} b).
\begin{align*}
(F*G)(a_K) &= \sum_{L \subset \underline n} (-1)^{|L||L^c|} F(a_L)*G(*a_L a_K) \\ 
&= (-1)^{|I||I^c|} f* G(*a_I a_K).
\end{align*}
This is non-zero only if $*a_I a_k$ is a multiple of $a_J.$ Now $*a_I = \mathrm{sgn}(\sigma_I) a_{I^c},$ and in $\U(\hc)$ we have the equality
\[
a_{I^c} a_{(I \Delta J)^c} = \pm z^{|I^c \cap J^c|} a_J \in \U(\hc),
\]
where $I \Delta J$ is the symmetric difference of the subsets $I, J \subset \underline n.$ Hence, we choose $K = (I \Delta J)^c$ and obtain
\begin{equation*}
(f \otimes \xi^I) * (g \otimes \xi^J) = \pm (f*g)^{(|I^c \cap J^c|)} \otimes \xi^{(I \Delta J)^c}.
\end{equation*}
\end{remark}

\begin{proposition} \label{prop:ConvBasic}
The convolution product is an $\RR$-bilinear, continuous  map $*: \C^\infty(\HC) \times \C^\infty_c(\HC) \rightarrow \C^\infty(\HC)$ with parity $|\gamma|,$ that is, $|F*G| = |F|+|G|+|\gamma|.$
\end{proposition}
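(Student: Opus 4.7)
The plan is to verify the three assertions of the proposition in sequence, reducing each to either a direct computation or a known classical fact. Bilinearity of $*$ is immediate from Definition~\ref{def:Convolution}, since the pairing $\langle \cdot, \cdot \rangle$ is $\RR$-bilinear and $G \mapsto L_{u;x} i^* G$ is $\RR$-linear.

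For the parity statement, I would invoke part~(d) of Proposition~\ref{prop:ConvBasics}, which gives
\[
(F*G)(u) = \sum_i (-1)^{|\gamma_i^{(1)}||\gamma_i^{(2)}|} F(\gamma_i^{(1)}) * G(S(\gamma_i^{(2)})u).
\]
The summands are classical convolutions of functions in $C^\infty(\RR)$, which is purely even. Hence each nonzero summand requires both $|F| = |\gamma_i^{(1)}|$ and $|G| + |\gamma_i^{(2)}| + |u| \equiv 0 \pmod 2$. Combining these with $|\gamma_i^{(1)}| + |\gamma_i^{(2)}| = |\gamma|$ forces $|u| \equiv |F| + |G| + |\gamma| \pmod 2$, so that $|F*G| = |F|+|G|+|\gamma|$.

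For continuity, I would pass to the coordinates of Remark~\ref{rem:Coord}, writing $F = \sum_I f_I \otimes \xi^I$ and $G = \sum_J g_J \otimes \xi^J$. That remark expresses $F*G$ as a finite sum of terms of the form $\pm (f_I * g_J)^{(k)} \otimes \xi^L$ with $k = |I^c \cap J^c|$. The seminorms $p_{K, z^k a_L}$ defining the Fr\'echet topology on $\C^\infty(\HC)$ are equivalent in coordinates to seminorms of the form $\max_{x \in K} |f_L^{(k)}(x)|$, so it suffices to establish continuity of the classical convolution $C^\infty(\RR) \times C_c^\infty(\RR) \to C^\infty(\RR)$ together with that of differentiation, both of which are standard.

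The main obstacle will be the LF-space topology on $\C^\infty_c(\HC) = \bigcup_{K_0} \C^\infty_{K_0}(\HC)$, for which joint continuity of a bilinear map does not come for free from separate continuity. To handle this, I would fix a compact $K_0 \subset \RR$ and restrict to the Fr\'echet subspace $\C^\infty_{K_0}(\HC)$. Part~(b) of Proposition~\ref{prop:ConvBasics} ensures $\supp(F*G) \subset \supp\,F + K_0$, and the elementary bound
\[
\max_{x \in K} |(f_I * g_J)^{(k)}(x)| \leq \mathrm{vol}(K_0) \cdot \max_{t \in K - K_0}|f_I(t)| \cdot \max_{y \in K_0}|g_J^{(k)}(y)|
\]
shows that $p_{K, z^k a_L}(F*G)$ is dominated by a finite sum of products of seminorms of $F$ in $\C^\infty(\HC)$ and of $G$ in $\C^\infty_{K_0}(\HC)$. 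This yields joint continuity of $*$ on $\C^\infty(\HC) \times \C^\infty_{K_0}(\HC)$, and the universal property of the strict inductive limit then delivers continuity on $\C^\infty(\HC) \times \C^\infty_c(\HC)$.
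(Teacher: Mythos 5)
Your bilinearity and parity arguments are correct, and your reduction of continuity to coordinates via Remark~\ref{rem:Coord} is exactly the paper's route. The parity computation through the last item of Proposition~\ref{prop:ConvBasics} is a slightly more explicit variant of what the paper does, which is simply to note that $(F*G)(u;x)=\pm\langle F, L_{u;x}i^*G\rangle$ and invoke the parity $|\gamma|$ of the pairing from Remark~\ref{rem:ParityPairing}; both routes force $|u|\equiv|F|+|G|+|\gamma|$ on the nonvanishing components.

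The step that fails is your final sentence. The universal property of a strict inductive limit governs \emph{linear} maps out of $\C^\infty_c(\HC)=\bigcup_{K_0}\C^\infty_{K_0}(\HC)$; applied here it yields only that $G\mapsto F*G$ is continuous for each fixed $F$ (and your explicit bound gives continuity of $F\mapsto F*G$ for each fixed $G$), i.e.\ separate continuity, hence hypocontinuity since all the spaces involved are barrelled. It does not upgrade joint continuity on each $\C^\infty(\HC)\times\C^\infty_{K_0}(\HC)$ to joint continuity on $\C^\infty(\HC)\times\C^\infty_c(\HC)$ for the product topology, and joint continuity in fact genuinely fails, already for the classical convolution $C^\infty(\RR)\times C^\infty_c(\RR)\to C^\infty(\RR)$: your own estimate controls $p_{K,z^ka_L}(F*G)$ by the seminorm $\max_{t\in K-K_0}|f_I(t)|$, and the compact set $K-K_0$ grows with the support of $G$, so no fixed neighbourhood of $0$ in $\C^\infty(\HC)$ works for all $G$ in a $0$-neighbourhood of the LF space. (Concretely, a $0$-neighbourhood in $C^\infty(\RR)$ constrains $f$ only on a fixed compact set, while every $0$-neighbourhood in $C^\infty_c(\RR)$ contains bumps of nonzero integral supported arbitrarily far out; convolving such a bump with an $f$ that is huge near the reflected support produces arbitrarily large values on $[0,1]$.) The paper's proof elides this point entirely, so the word ``continuous'' in the proposition should be read as separately continuous, equivalently hypocontinuous; your argument up to, but not including, its last sentence establishes exactly that.
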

\begin{proof}
Bilinearity is clear, and continuity can be conveniently checked in coordinates. By Remark~\ref{rem:Coord} we have
\[
(f \otimes \xi^I, g \otimes \xi^J) \mapsto \pm (f*g)^{(|I^c \cap J^c|)} \otimes \xi^{(I \Delta J)^c}, 
\]
hence continuity follows from the corresponding result for functions on $\RR,$ together with continuity of the derivative as a map from $C^\infty(\RR)$ to $C^\infty(\RR).$ The parity of the convolution product equals the parity of the pairing $ \langle \cdot, \cdot \rangle ,$ which is $|\gamma|$ by Remark~\ref{rem:ParityPairing}.
\end{proof}

\begin{proposition} \label{prop:ConvExist}
Let $F,G$ and $\Phi$ be smooth functions on $\HC,$ at least two of which are compactly supported. Then
\begin{equation*}
\langle F*G,\Phi \rangle =  \langle F \otimes G, m^* \Phi \rangle
\end{equation*}
and
\begin{equation*}
\langle F*G,\Phi \rangle = \langle F, i^*(G* i^*\Phi) \rangle.
\end{equation*}
Furthermore, if one of $F,G$ is compactly supported, then
\begin{equation*}
F*G = (-1)^{|F||G|}i^*(i^*G * i^*F).
\end{equation*}
\end{proposition}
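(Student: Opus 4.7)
The plan is to prove the three identities in sequence, deducing (2) from (1) and proving (3) by direct computation. The main tools throughout are the explicit pairing formula of Lemma~\ref{lem:PairExplicit}, the invariance identity~\eqref{eq:Invariance}, and the supercommutativity of $\langle\cdot,\cdot\rangle$ noted in Remark~\ref{rem:ParityPairing}. Sign-tracking will be the principal technical obstacle, compounded by the non-uniqueness of the Sweedler summands noted in Remark~\ref{rem:deltagamma}; in practice one fixes the canonical representation $\Delta(\gamma)=\sum_I a_I\otimes{*a_I}$ and extends by $\z$-linearity.

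For the first identity, I would unfold $(F*G)(u;x)$ using its definition and the action of $L_{u;x}$ on $i^*G$. The component formula $(L_{u;x}i^*G)(v;y) = (-1)^{|u||G|}G(S(v)u;x-y)$, obtained from $S^2=\id$ and the antimultiplicativity of $S$, leads to an expression of the form
\[
(F*G)(u;x) = (-1)^{|u||\gamma|}\sum_i(-1)^{|\gamma_i^{(1)}||\gamma_i^{(2)}|}\int_\RR F(\gamma_i^{(1)};y)\,G(S(\gamma_i^{(2)})u;x-y)\,dy.
\]
Pairing with $\Phi$ via Lemma~\ref{lem:PairExplicit}, applying Fubini (justified since at least two of $F,G,\Phi$ are compactly supported), and substituting $z=x-y$ yields a double integral indexed by two copies of the Sweedler symbols of $\Delta(\gamma)$. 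The right-hand side unfolds identically: on $\HC\times\HC$ the volume element is $\gamma\otimes\gamma$, its coproduct is a Koszul-signed tensor product $\Delta(\gamma)\otimes\Delta(\gamma)$, and $(m^*\Phi)(u\otimes v;x,y)=\Phi(uv;x+y)$. Matching terms then reduces to coassociativity of $\Delta$ applied to $\gamma$.

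The second identity can be derived from (1) by rearranging this same double integral: using coassociativity, one regroups the Sweedler indices so that the inner integration (over $z$ with $y$ fixed) is recognized as the $\gamma_i^{(2)}$-component of $i^*(G*i^*\Phi)(\cdot;y)$, the classical heuristic being $(G*i^*\Phi)(-y)=\int G(w)\Phi(w+y)\,dw$. Pairing the outer integral with $F$ via Lemma~\ref{lem:PairExplicit} then gives the right-hand side of (2).

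For the third identity I would work directly from the right-hand side. By the definition of $i^*$ and the convolution, together with $(i^*)^2=\id$,
\[
i^*(i^*G*i^*F)(u;x) = (i^*G*i^*F)(S(u);-x) = (-1)^{|u|(|F|+|\gamma|)}\langle i^*G, L_{S(u);-x}F\rangle.
\]
Applying~\eqref{eq:Invariance} converts $L_{S(u);-x}$ acting on $F$ into $L_{u;x}$ acting on $i^*G$ with a sign $(-1)^{|G||u|}$, and the supercommutativity of $\langle\cdot,\cdot\rangle$ then swaps the entries to produce $\langle F,L_{u;x}i^*G\rangle = (-1)^{|u|(|G|+|\gamma|)}(F*G)(u;x)$. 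Collecting all exponents modulo $2$ yields precisely $(-1)^{|F||G|}$, as claimed.
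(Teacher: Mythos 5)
Your proposal follows essentially the same route as the paper: unfold the convolution via Lemma~\ref{lem:PairExplicit} into double integrals over Sweedler components of $\Delta(\gamma)$, use the invariance identity~\eqref{eq:Invariance} to shift $L_{\gamma_i^{(2)};x}$ onto $\Phi$ and match against $m^*\Phi$, recognize the inner integral as a component of $i^*(G*i^*\Phi)$ for the second identity, and obtain the third by the invariance/supercommutativity manipulation of $\langle F, L_{u;x}i^*G\rangle$ (the paper reads this chain left-to-right rather than from the right-hand side, which is immaterial). One caveat: your component formula for $L_{u;x}i^*G$ drops the Koszul sign $(-1)^{|u||v|}$ arising from $S(S(u)v)=(-1)^{|u||v|}S(v)u$, which propagates into the spurious prefactor $(-1)^{|u||\gamma|}$ in your displayed formula for $(F*G)(u;x)$; the correct expression (Proposition~\ref{prop:ConvBasics}~d)) carries only $(-1)^{|\gamma_i^{(1)}||G|}$, so this must be repaired, though it does not affect the viability of the argument.
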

\begin{proof}
We compute $ \langle F*G,\Phi \rangle $ for $\Phi \in \C^\infty_c(\HC)$ as
\begin{align*}
&\langle F*G,\Phi \rangle = \sum_j (-1)^{|\Phi||\gamma_j^{(1)}|} \int_\RR(F*G)(\gamma_j^{(1)} ;y) \Phi(\gamma_j^{(2)};y) \, \mathrm{d} y \\
&= \sum_{i,j} (-1)^{|\Phi||\gamma_j^{(1)}| + |G||\gamma_i^{(1)}|}
\int_\RR (F(\gamma_i^{(1)})*G(S(\gamma_i^{(2)}) \gamma_j^{(1)}))(y) \Phi(\gamma_j^{(2)};y) \, \mathrm{d} y \\
&= \sum_{i,j} (-1)^{|\Phi||\gamma_j^{(1)}| + |G||\gamma_i^{(1)}|}
\int_{\RR^2} F(\gamma_i^{(1)};x) G(S(\gamma_i^{(2)})\gamma_j^{(1)};y-x) \Phi(\gamma_j^{(2)};y) \, \mathrm{d}x \mathrm{d}y  \\
&= \sum_{i,j} (-1)^{|\Phi||\gamma_j^{(1)}| + |G||\gamma|}
\int_{\RR^2} F(\gamma_i^{(1)};x) (L_{\gamma_i^{(2)};x}G)(\gamma_j^{(1)};y) \Phi(\gamma_j^{(2)};y) \, \mathrm{d}x \mathrm{d}y  \\
&= \sum_i (-1)^{|G||\gamma|}
\int_{\RR} F(\gamma_i^{(1)};x) (L_{\gamma_i^{(2)};x}G, \Phi) \, \mathrm{d}x.
\end{align*}
Next, we use the invariance of the pairing $\langle \cdot, \cdot \rangle$ to obtain
\begin{align*}
\langle F*G, \Phi \rangle  &= \sum_i (-1)^{|G||\gamma_i^{(1)}|}
\int_{\RR} F(\gamma_i^{(1)};x) (G, L_{S(\gamma_i^{(2)});-x}\Phi) \, \mathrm{d}x \\
&= \sum_{i,j} (-1)^{|G||\gamma_i^{(1)}| + |\Phi|(|\gamma_i^{(1)}|+|\gamma_j^{(1)}|) + |\gamma_j^{(1)}||\gamma_i^{(2)}|} \\ & \hskip1cm \int_{\RR^2} F(\gamma_i^{(1)};x) G(\gamma_j^{(1)};x) \Phi(\gamma_i^{(2)}\gamma_j^{(2)}; x+y) \, \mathrm{d}x \mathrm{d} y \\
&= \langle F \otimes G, m^*\Phi \rangle.
\end{align*}
The function $i^*(G * i^* \Phi)$ takes values 
\begin{align*}
i^*(G * i^* \Phi)(v;y) &= (G*i^*\Phi)(S(v);-y) \\
&= \sum_j (-1)^{|\Phi||\gamma_j^{(1)}|} (G(\gamma_j^{(1)}) * (i^*\Phi)(S(\gamma_j^{(2)})S(v)))(-y) \\
&=  \sum_j (-1)^{|\Phi||\gamma_j^{(1)}|} \int_\RR
G(\gamma_j^{(1)};x)(i^*\Phi)(S(\gamma_j^{(2)})S(v))(-x-y) \, \mathrm{d} x \\
&=  \sum_j (-1)^{|\Phi||\gamma_j^{(1)}|} \int_\RR
G(\gamma_j^{(1)};x)\Phi(\gamma_j^{(2)}v; x+y) \, \mathrm{d} x.
\end{align*}
Therefore, comparing with the above computations yields
\begin{align*}
\langle F*G, \Phi \rangle &= \sum_i (-1)^{|\gamma_i^{(1)}|(|G|+|\Phi|)} \int_\RR F(\gamma_i^{(1)};x) i^*(G* i^*\Phi)(\gamma_i^{(2)};x) \, \mathrm{d} x \\
&= \langle F, i^*(G* i^* \Phi) \rangle. 
\end{align*}

Lastly, we compute
\begin{align*}
(F*G)(u;x) &= (-1)^{|u|(|G|+|\gamma|)} \langle F,L_{u;x}i^*G \rangle \\
&= (-1)^{|u|(|G|+|\gamma|) + |F|(|u|+|G|)} \langle L_{u;x}i^*G, F \rangle \\
&= (-1)^{|F||G| + |u|(|F|+ |\gamma|)} \langle i^*G, L_{S(u);-x}F \rangle \\
&= (-1)^{|F||G|}(i^*G * i^*F)(S(u);-x),
\end{align*}
which proves that $F*G = (-1)^{|F||G|}i^*(i^*G * i^* F).$
\end{proof}

For later use we record the following corollary of our proof.

\begin{corollary} \label{cor:Conv}
\begin{enumerate}[a)]
\item
\[
\langle F*G, \Phi \rangle = \langle F, i^*(G* i^* \Phi) \rangle =  \sum_i (-1)^{|G||\gamma|}
\int_{\RR} F(\gamma_i^{(1)};x) (L_{\gamma_i^{(2)};x}G, \Phi) \, \mathrm{d}x.
\]
\item The convolution product is associative.
\end{enumerate}
\end{corollary}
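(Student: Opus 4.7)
\textbf{Part (a)} requires no new argument. Both displayed equalities already appear inside the proof of Proposition~\ref{prop:ConvExist}: the identity $\langle F*G,\Phi\rangle = \sum_i(-1)^{|G||\gamma|}\int_{\RR}F(\gamma_i^{(1)};x)\,\langle L_{\gamma_i^{(2)};x}G,\Phi\rangle\,\mathrm dx$ is the last line of the first displayed computation in that proof (immediately before the sentence "Next, we use the invariance of the pairing"), and $\langle F*G,\Phi\rangle = \langle F,i^*(G*i^*\Phi)\rangle$ is a direct conclusion of the second half of that proof. I would simply isolate these identities as a corollary.

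\textbf{Part (b)}, associativity, is the substantive claim. My plan is to test the desired identity against arbitrary $\Phi\in\C_c^\infty(\HC)$ and then invoke the injection $\C^\infty(\HC)\hookrightarrow\mathcal D'(\HC)$ from Remark~\ref{rem:Distributions}~a) to deduce equality of functions. Using Proposition~\ref{prop:ConvExist}, the two sides become
\[
\langle(F*G)*H,\Phi\rangle = \langle(F*G)\otimes H,\,m^*\Phi\rangle,\qquad
\langle F*(G*H),\Phi\rangle = \langle F\otimes(G*H),\,m^*\Phi\rangle.
\]
I would then apply Proposition~\ref{prop:ConvExist} a second time, slot by slot, to identify each side with a triple pairing
\[
\langle F\otimes G\otimes H,\,(m^*\otimes\id)m^*\Phi\rangle \quad \text{resp.} \quad \langle F\otimes G\otimes H,\,(\id\otimes m^*)m^*\Phi\rangle,
\]
and conclude equality from the coassociativity $(m^*\otimes\id)\circ m^* = (\id\otimes m^*)\circ m^*$, which is dual to the associativity $m\circ(m\times\id)=m\circ(\id\times m)$ of the supergroup multiplication $m$.

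The main obstacle is this second, "slot-by-slot" application of Proposition~\ref{prop:ConvExist}: it requires a parametric version stating that for $\Psi\in\C^\infty(\HC\times\HC)$ with suitable support, $\langle(F_1*F_2)\otimes F_3,\Psi\rangle = \langle F_1\otimes F_2\otimes F_3,(m^*\otimes\id)\Psi\rangle$ and symmetrically in the other slot. After passing to coordinates via Proposition~\ref{prop:SheafAlg} and Remark~\ref{rem:Coord} (so $\Psi$ becomes a $C^\infty(\RR^2)$-valued element of $\Lambda V^*\otimes\Lambda V^*$), this collapses to classical Fubini on $\RR^3$ combined with exactly the sign bookkeeping already performed in the proof of Proposition~\ref{prop:ConvExist}. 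Hence the work is organizational: no genuinely new sign computation is required, and the abstract Hopf-theoretic coassociativity takes care of everything else.
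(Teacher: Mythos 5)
Your proposal is correct and follows essentially the same route as the paper: part (a) is read off from the intermediate identities in the proof of Proposition~\ref{prop:ConvExist}, and part (b) is reduced, by testing against $\Phi\in\C^\infty_c(\HC)$ and using the injection into $\mathcal D'(\HC)$, to the coassociativity of $m^*$ via $\langle F*G,\Phi\rangle=\langle F\otimes G,m^*\Phi\rangle$. The paper's own proof of (b) is a one-line appeal to the associativity of $(F,G)\mapsto\langle F\otimes G,m^*\Phi\rangle$; the parametric (slot-by-slot) extension of Proposition~\ref{prop:ConvExist} that you correctly identify as the only real obstacle is exactly the detail the authors leave implicit, and your Fubini-in-coordinates justification for it is sound.
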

\begin{proof}
Equality a) occurs in the course of the proof, and b) follows because
\[
(F,G) \mapsto \langle F \otimes G, m^*\Phi \rangle
\]
is associative.
\end{proof}

\begin{theorem} \label{thm:ConvFT}
If $F,G \in \mathcal \C^\infty_c(\HC)$ and $\zeta \in \RR,$ then
\[
(F*G) \widehat \; (\zeta) = \widehat F(\zeta) \widehat G(\zeta)
\]
in $\mathcal H.$
\end{theorem}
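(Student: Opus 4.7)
The starting point is the tautology $\widehat{F*G}(\zeta) = \langle F*G, \pi_{-\zeta}\rangle$. First I would apply the identity from Corollary~\ref{cor:Conv}~a) with $\Phi := \pi_{-\zeta}$; this formula was proved for scalar-valued $\Phi$ but is linear in $\Phi$, and so extends componentwise to $\mathcal H$-valued test functions. This yields
\[
\widehat{F*G}(\zeta) = \sum_i (-1)^{|G||\gamma|}\int_\RR F(\gamma_i^{(1)};x)\,\langle L_{\gamma_i^{(2)};x}G, \pi_{-\zeta}\rangle\,dx.
\]
Next, each inner pairing is, by the very definition of the Fourier transform, equal to $(L_{\gamma_i^{(2)};x}G)\widehat{\ }(\zeta)$, which Theorem~\ref{thm:Intertwine} reduces to $\pi_{-\zeta}(\gamma_i^{(2)};x)\,\widehat G(\zeta)$.

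At this stage $\widehat G(\zeta) \in \mathcal H$ is independent of $x$, so I would pull it past the scalar $F(\gamma_i^{(1)};x)$ and out of the $x$-integral. What remains inside is the sum $\sum_i \int_\RR F(\gamma_i^{(1)};x)\,\pi_{-\zeta}(\gamma_i^{(2)};x)\,dx$, which I would recognise as $\widehat F(\zeta)$ directly from the definitions $\widehat F(\zeta) = \int_\HC F\cdot\pi_{-\zeta} = \int_\RR(F\cdot\pi_{-\zeta})(\gamma;x)\,dx$, since Proposition~\ref{prop:SheafAlg}~a) expands $(F\cdot\pi_{-\zeta})(\gamma;x)$ as precisely this sum (the super-tensor sign $(-1)^{|\pi_{-\zeta}||\gamma_i^{(1)}|}$ being trivial because $\pi_{-\zeta}$ is even).

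The main technical obstacle will be the super-sign bookkeeping: one must reconcile the prefactor $(-1)^{|G||\gamma|}$ with the implicit parities in $\widehat F$ and $\widehat G$ and with the non-commutative ordering of the scalar $F(\gamma_i^{(1)};x)$ past the operator $\widehat G(\zeta)$. By bilinearity I would reduce to homogeneous $F, G$ and invoke the parity constraints $|F| = |\gamma_i^{(1)}|$, $|G| = |\gamma_i^{(2)}|$ in the non-vanishing summands to collapse the signs. A conceptually cleaner alternative would use the primal identity $\langle F*G,\Phi\rangle = \langle F\otimes G, m^*\Phi\rangle$ of Proposition~\ref{prop:ConvExist} together with the multiplicativity $m^*\pi_{-\zeta}(u\otimes v;x,y) = \pi_{-\zeta}(u;x)\pi_{-\zeta}(v;y)$ from Proposition~\ref{prop:Rep}, factoring the double integral directly into $\widehat F(\zeta)\widehat G(\zeta)$; I would not pursue this route because it requires first extending the invariant integral to $\HC\times\HC$, which the paper has not done explicitly.
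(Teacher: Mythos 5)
Your route is genuinely different from the paper's: the paper proves the theorem in three lines by combining the identity $\langle F*G,\Phi\rangle=\langle F\otimes G,m^*\Phi\rangle$ of Proposition~\ref{prop:ConvExist} with the comultiplicativity $(m^*\otimes\id)\circ\pi_{-\zeta}=(\id\otimes\pi_{-\zeta})\circ\pi_{-\zeta}$ of Proposition~\ref{prop:Rep} --- exactly the ``conceptually cleaner alternative'' you describe and then decline. (Your reservation about the pairing on $\HC\times\HC$ is fair but not decisive: the paper never defines that pairing abstractly either, it is only given by the explicit double-integral formula appearing in the proof of Proposition~\ref{prop:ConvExist}, which is all one needs.) Your route through Corollary~\ref{cor:Conv}~a) and Theorem~\ref{thm:Intertwine} is viable, and each individual step --- extending the corollary to $\mathcal H$-valued $\Phi$ by linearity, evaluating the inner pairing as $\pi_{-\zeta}(\gamma_i^{(2)};x)\widehat G(\zeta)$, pulling the constant operator $\widehat G(\zeta)$ out on the right, and recognizing the remaining sum as $\widehat F(\zeta)$ via the remark after Definition~\ref{def:FT} --- is sound. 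What your approach buys is that the operator-valued structure of the answer ($\widehat F$ acting on the left of $\widehat G$) appears for free from the intertwining relation; what it costs is that the sign of Corollary~\ref{cor:Conv}~a) is carried along undigested.

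And that sign is the gap. Carried out faithfully, your computation yields $(F*G) \widehat{\;} (\zeta)=(-1)^{|G||\gamma|}\widehat F(\zeta)\widehat G(\zeta)$, and the prefactor $(-1)^{|G||\gamma|}$ does \emph{not} collapse: it is independent of the summation index $i$, so the parity constraints $|F|=|\gamma_i^{(1)}|$, $|G|=|\gamma_i^{(2)}|$ (which are already consumed in deriving the corollary) have nothing left to cancel it against. You can test this for $n=1$ with $F=f\otimes 1$ and $G=g\otimes\xi^1$: then $\widehat F(\zeta)=\hat f(\zeta)\,d\pi_{-\zeta}(a_1)$ and $\widehat G(\zeta)=\hat g(\zeta)\,\id$, while $(F*G)(1;x)=\langle F,L_xi^*G\rangle=-(f*g)(x)$ and $(F*G)(a_1)=0$, so that $(F*G) \widehat{\;} (\zeta)=-\widehat F(\zeta)\widehat G(\zeta)$. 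The same residual $(-1)^{|G||\gamma|}$ emerges if one tracks the Koszul signs through the paper's own route (the last line of the paper's proof suppresses it), so this is a tension in the sign conventions rather than a defect peculiar to your method; nevertheless your writeup must either carry the sign explicitly --- the identity as stated then holds whenever $n$ is even or $G$ is even --- or locate and repair the responsible convention, rather than asserting that the bookkeeping collapses.
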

\begin{proof}
By definition of the Fourier transform and Proposition~\ref{prop:ConvExist},
\[
(F*G) \widehat \; (\zeta) = \langle F*G , \pi_{-\zeta} \rangle = \langle F \otimes G , (m^* \otimes \id_S) \circ \pi_{-\zeta} \rangle.
\]
By Proposition~\ref{prop:Rep} we have  $(m^* \otimes \id_S) \circ \pi_{-\zeta} = (\id_S \otimes \pi_{-\zeta}) \circ  \pi_{-\zeta},$ and the latter is given by
\[
((\id_S \otimes \pi_{-\zeta}) \circ  \pi_{-\zeta})( u \otimes v; x,y) = \pi_{-\zeta}(u;x) \circ  \pi_{-\zeta}(v;y) \in \mathcal H.
\]
From this we conclude $\langle F \otimes G , (m^* \otimes \id_S) \circ \pi_{-\zeta} \rangle = \langle F, \pi_{-\zeta} \rangle \langle G, \pi_{-\zeta} \rangle $ and hence the claim.
\end{proof}

For completeness, we also compute the Fourier transform of the pointwise product of compactly supported superfunctions. To this end, we use the notation
\[
((\Delta \otimes \id) \circ \Delta)(\gamma) = \sum_i \gamma_i^{(1)} \otimes \gamma_i^{(2)} \otimes \gamma_i^{(3)}.
\]

\begin{proposition}
If $F,G \in \C^\infty_c(\HC),$ then the Fourier transform of $F\cdot G$ is given by 
\[
(F \cdot G) \widehat \; (\zeta)= \frac{1}{2\pi} \sum_i (F(\gamma_i^{(1)}) \widehat \;   * G(\gamma_i^{(2)}) \widehat \;  )(\zeta) d\pi_{-\zeta}(\gamma_i^{(3)}).
\]
\end{proposition}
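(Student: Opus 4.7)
The plan is to realize $\widehat{F \cdot G}(\zeta)$ as the integral over $\HC$ of the \emph{triple} superproduct $F \cdot G \cdot \pi_{-\zeta}$, and then to apply the classical convolution theorem termwise in the expansion produced by the iterated coproduct of $\gamma$. By Definition~\ref{def:FT} together with the associativity of the pointwise multiplication in $\C^\infty(\HC, \mathcal H)$,
\[
(F \cdot G) \widehat \; (\zeta) = \int_{\HC} (F \cdot G) \cdot \pi_{-\zeta} = \int_\RR (F \cdot G \cdot \pi_{-\zeta})(\gamma; x) \,\mathrm{d} x.
\]

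Next, I would evaluate the integrand by iterating the multiplication formula $(A \cdot B)(\gamma; x) = \sum_i (-1)^{|\gamma_i^{(1)}||B|} A(\gamma_i^{(1)}; x) B(\gamma_i^{(2)}; x)$ used in the proof of Lemma~\ref{lem:PairExplicit}. Invoking coassociativity, this gives
\[
(F \cdot G \cdot \pi_{-\zeta})(\gamma; x) = \sum_i \epsilon_i \, F(\gamma_i^{(1)}; x) \, G(\gamma_i^{(2)}; x) \, \pi_{-\zeta}(\gamma_i^{(3)}; x),
\]
where $\epsilon_i \in \{\pm 1\}$ is the Koszul sign produced by the two applications of $\mu \circ (\,\cdot\, \otimes \,\cdot\,)$. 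Since $\pi_{-\zeta}$ has parity $0$, this sign collapses to $(-1)^{|\gamma_i^{(1)}||G|}$, which on the surviving summands simplifies by the parity-matching $|\gamma_i^{(1)}| = |F|$ and $|\gamma_i^{(2)}| = |G|$ forced by the purely even target $C^\infty(\RR)$.

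Then I would substitute $\pi_{-\zeta}(\gamma_i^{(3)}; x) = e^{-i\zeta x}\,d\pi_{-\zeta}(\gamma_i^{(3)})$, pull the $\mathcal H$-valued factor outside the $x$-integral, and recognise the remaining scalar integral
\[
\int_\RR F(\gamma_i^{(1)}; x) \, G(\gamma_i^{(2)}; x) \, e^{-i\zeta x} \,\mathrm{d} x
\]
as the classical Fourier transform of a pointwise product in $C_c^\infty(\RR)$. The classical convolution theorem rewrites this integral as $\tfrac{1}{2\pi}\bigl(F(\gamma_i^{(1)}) \widehat \; * G(\gamma_i^{(2)}) \widehat \;\bigr)(\zeta)$. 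Summing the resulting contributions and comparing with the claimed formula finishes the argument.

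The main technicality is, unsurprisingly, the sign bookkeeping in the triple-product expansion: one has to check that the Koszul signs collapse to the expected overall sign, exactly as in Lemma~\ref{lem:PairExplicit}, using that $C^\infty(\RR)$ is purely even and that $\pi_{-\zeta}$ takes values in the even sub-superspace of $\C^\infty(\HC, \mathcal H)$. Once this parity calculus is carried out, the remainder of the proof is a direct application of the scalar-valued convolution theorem and requires no further supergeometric input.
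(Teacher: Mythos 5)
Your proposal is correct and follows essentially the same route as the paper's proof: realize $\widehat{F\cdot G}(\zeta)$ as $\langle F\cdot G,\pi_{-\zeta}\rangle$, expand via the iterated coproduct $(\Delta\otimes\id)\circ\Delta$ applied to $\gamma$, factor out $d\pi_{-\zeta}(\gamma_i^{(3)})$, and apply the classical identity $\widehat{fg}=\tfrac{1}{2\pi}(\hat f*\hat g)$ termwise. Your explicit attention to the Koszul signs (which the paper's own computation suppresses) is the only difference, and it is a cosmetic rather than structural one.
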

\begin{proof}
We use the definition of Fourier transform and of the product in $\C^\infty(\HC)$ to obtain
\begin{align*}
(F \cdot G) \widehat \; (\zeta) &= \langle F \cdot G, \pi_{-\zeta} \rangle = \langle \mu \circ F \otimes G \circ \Delta, \pi_{-\zeta} \rangle \\
&= \int_\RR (F \otimes G \otimes \pi_{-\zeta}) (\Delta \circ (\id \otimes \Delta)(\gamma;x) \, \mathrm{d} x \\
&= \sum_i \mathcal F(F(\gamma_i^{(1)}) * G(\gamma_i^{(2)}))(\zeta) d\pi_{-\zeta}(\gamma_i^{(3)}).
\end{align*}
Now the claim follows from the classical fact
\[
\widehat{f g} (\zeta)  = \frac{1}{2\pi} (\hat f * \hat g)(\zeta).
\]
\end{proof}

\subsection*{Convolution with a Distribution}

Our next goal is to define the convolution $U*F$ of a distribution $U \in \mathcal D'(\HC)$ and a compactly supported function $F.$

\begin{definition}
Let $U \in \mathcal D'(\HC)$ and $F \in \C_c^\infty(\HC).$ We define a linear functional $U*F$ on $\C^\infty(\HC)$ by 
\[
\langle U*F, \Phi \rangle := \langle U, i^*(F* i^* \Phi) \rangle.
\]
\end{definition}

\begin{remark}
Note that by Proposition~\ref{prop:ConvExist}, for $U \in \C^\infty(\HC)$ this definition agrees with Definition~\ref{def:Convolution}.
\end{remark}

\begin{proposition}
Given $U \in \mathcal D'(\HC)$ and $F \in \C_c^\infty(\HC),$ the functional $U*F$ is a distribution on $\HC.$ The distribution $U*F$ is given by the smooth function $H \in \C^\infty(\HC)$ defined by 
\begin{equation} \label{eq:ConvDist}
H(u) = \sum_i (-1)^{|\gamma_i^{(1)}||F|}U(\gamma_i^{(1)})*F(\gamma_i^{(2)}u) 
\end{equation}
for $u \in \U(\hc).$
\end{proposition}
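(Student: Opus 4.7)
The plan is to carry out three separate checks: that $U*F$ defines a continuous functional on $\C_c^\infty(\HC)$, that the formula for $H$ yields an element of $\C^\infty(\HC)$, and that the distribution represented by $H$ via the pairing coincides with $U*F$.

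For continuity, I would first show that the operator $T_F \colon \Phi \mapsto i^*(F * i^* \Phi)$ is a continuous linear endomorphism of $\C_c^\infty(\HC)$. Since $F$ is compactly supported, the support estimate in Proposition~\ref{prop:ConvBasics}(b) shows that $T_F$ sends $\C_K^\infty(\HC)$ into $\C_{\supp F - K}^\infty(\HC)$ for each compact $K\subset\RR$, and continuity on each such subspace follows from the obvious continuity of $i^*$ combined with Proposition~\ref{prop:ConvBasic}. Composing with $U$ then gives continuity of $\Phi \mapsto \langle U, T_F\Phi\rangle$, so $U*F \in \mathcal D'(\HC)$.

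For smoothness of $H$, the main point is to verify $\mathfrak{g}_0$-linearity, i.e.\ $H(zu) = -H(u)'$. Centrality of $z$ in $\U(\hc)$ and $\mathfrak{g}_0$-linearity of $F$ give $F(\gamma_i^{(2)} z u) = F(z \gamma_i^{(2)} u) = -F(\gamma_i^{(2)} u)'$; combined with the classical identity $V*f' = (V*f)'$ for $V \in \mathcal D'(\RR)$ and $f \in C_c^\infty(\RR)$, this yields $H(zu) = -H(u)'$ termwise.

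The heart of the argument is the final identification. I would expand $\langle U*F, \Phi\rangle = \langle U, i^*(F * i^*\Phi)\rangle$ by first applying Proposition~\ref{prop:ConvBasics}(d) to compute $F * i^*\Phi$ in Sweedler notation, then unwinding $i^*$ and pairing against $U$ via Lemma~\ref{lem:PairExplicit}. Separately, I would expand $\langle H, \Phi\rangle$ directly using Lemma~\ref{lem:PairExplicit} together with the defining formula for $H$. Both resulting expressions consist of integrals of classical convolutions $U(\gamma^{(1)}) * F(w)$ paired with translates of $\Phi$, and they are reconciled using the elementary identity $\int_\RR (V*f)(y)\phi(y)\,dy = V_x[(\check f * \phi)(x)]$ with $\check f(x):=f(-x)$, together with (super)cocommutativity of $\Delta$ applied iteratively to $\gamma$. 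The main obstacle is the combinatorial bookkeeping of signs and Sweedler sums: the two computations invoke $\Delta(\gamma)$ at different places, and matching them requires careful use of the explicit form $\Delta(\gamma) = \sum_I a_I \otimes *a_I$ from Remark~\ref{rem:deltagamma} together with the parity identity $|F| = |\gamma_i^{(2)}|$ on non-vanishing terms.
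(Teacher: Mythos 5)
Your proposal is correct and follows essentially the same route as the paper: the key identification $\langle U*F,\Phi\rangle=\langle H,\Phi\rangle$ is obtained exactly as in the text, by expanding $\langle H,\Phi\rangle$ in Sweedler notation via Lemma~\ref{lem:PairExplicit}, applying the classical identity $\int_\RR (V*f)(y)\phi(y)\,\mathrm{d}y=V_x\bigl(\int_\RR f(y-x)\phi(y)\,\mathrm{d}y\bigr),$ and matching against the expansion of $\langle U, i^*(F*i^*\Phi)\rangle$ (the paper merely shortcuts this last step by quoting Corollary~\ref{cor:Conv}~a) instead of re-expanding). Your explicit continuity and $\z$-linearity checks are sound additions that the paper leaves implicit; the only slip is that $T_F$ maps $\C_K^\infty(\HC)$ into $\C^\infty_{K-\supp F}(\HC)$ rather than $\C^\infty_{\supp F-K}(\HC),$ which is harmless.
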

\begin{proof}
We need to show that
\[
\langle U , i^*(F*i^*\Phi) \rangle =  \langle H,\Phi \rangle
\]
holds for all $\Phi  \in \C^\infty_c(\HC).$ First we compute $ \langle H, \Phi \rangle $ as
\begin{align*}
\langle H,\Phi \rangle &= \sum_j (-1)^{|\gamma_j^{(1)}||\Phi|}\int_\RR H(\gamma_j^{(1)};y) \Phi(\gamma_j^{(2)};y) \, \mathrm{d}y \\
&= \sum_{i,j} (-1)^{|\gamma_j^{(1)}||\Phi| + |\gamma_i^{(1)}||F|}
\int_\RR (U(\gamma_i^{(1)})*F(S(\gamma_i^{(2)})\gamma_j^{(1)}))(y) \Phi(\gamma_j^{(2)};y) \, \mathrm{d} y \\
&= \sum_i (-1)^{|\gamma_i^{(1)}||F|} \\
&\hskip1cm U(\gamma_i^{(1)}) \left( \sum_j(-1)^{|\gamma_j^{(1)}||\Phi|} \int_\RR F(S(\gamma_i^{(2)})\gamma_j^{(1)})(y-x) \Phi(\gamma_j^{(2)};y) \, \mathrm{d} y \right) \\
&= \sum_i (-1)^{|F||\gamma|} \langle U(\gamma_i^{(1)}), \langle L_{\gamma_i^{(2)};x}F, \Phi \rangle \rangle.
\end{align*}
Here we have used that 
\[
\int_{\RR} (u*f)(y) \phi(y) \, \mathrm{d} y = u_x\left( \int_\RR f(y-x) \phi(y) \, \mathrm{d} y \right)
\]
for $u \in \mathcal D'(\RR)$ and $f,\phi \in C^\infty_c(\RR).$ Comparing with Corollary~\ref{cor:Conv} we obtain
\[
\langle H,\Phi \rangle = \langle U , i^*(F*i^*\Phi) \rangle = \langle U*F, \Phi\rangle,
\]
which completes the proof.
\end{proof}

\begin{example}
Let $e=(e_0,e^*)$ be the identity of $\HC$ and define a distribution $U$ via $\langle U, \Phi \rangle:= e^*\Phi = \Phi(1;0).$ Then $U \in \mathcal E'(\HC),$ which can be shown easily in coordinates. We compute
\begin{align*}
\langle U*F,\Phi \rangle &= \langle U , i^*(F*i^*\Phi) \rangle = (F*i^*\Phi)(1;0) \\
&= \sum_i (-1)^{|\gamma_i^{(1)}||\Phi|} \int_\RR F(\gamma_i^{(1)};x)(i^*\Phi)(S(\gamma_i^{(2)});-x) \, \mathrm{d} x \\
&=  \sum_i (-1)^{|\gamma_i^{(1)}||\Phi|} \int_\RR F(\gamma_i^{(1)};x)\Phi(\gamma_i^{(2)};x) \, \mathrm{d} x  = \langle F, \Phi \rangle
\end{align*}
for all $\Phi \in \C_c^\infty(\HC).$ Therefore, $U*F = F$ for all $F \in \C^\infty(\HC).$ 
\end{example}

\subsection*{The Convolution Algebra}

We begin by recalling the definition of Sobolev spaces on $\RR.$ We refer to~\cite{Adams} for details. The \emph{Sobolev space $W^{k,p}(\RR)$} is the space of functions in $L^p(\RR)$ whose distributional derivatives up to order $k$ exist and are in $L^p(\RR).$ The space $W^{k,p}(\RR)$ is a Banach space with the norm
\[
\|f\|_{k,p} := \left( \sum_{0 \leq j \leq k} \|f^{(j)} \|_p^p \right)^{1/p},
\]
where $f^{(j)}$ denotes the $j$th weak or distributional derivative of $f.$

For the rest of this section we fix an orthonormal basis $(a_i)_{i=1}^n$ of $V.$ We define Sobolev norms on $\C^\infty_c(\HC(V))$ in analogy with the definition of $\| \cdot \|_{k,p}$ above, replacing the derivatives by the differential operators $L_{z^ja_J}$ for $z^j a_J \in \U(\hc).$ As in the classical case, a different choice of basis $(a_i)_{i=1}^n$ will lead to an equivalent norm.

\begin{definition}
If $1 \leq p < \infty$ and $k \in \NN,$ we define a seminorm $\|\cdot \|_{k,p}$ on $\C^\infty_c(\HC)$ by 
\[
\| F \|_{k,p} = \left( \sum_{j+(\#I) \leq k} \| (L_{z^j a_I}F)(1) \|_p^p\right)^{1/p}.
\]
Here $\#I$ denotes the cardinality of $I \subset \underline n.$
\end{definition}

\begin{lemma}
If $k \geq \dim V,$ then $\|\cdot \|_{k,p}$ is a norm on $\C^\infty_c(\HC).$ The completion of $\C^\infty_c(\HC)$ with respect to $\| \cdot \|_{k,p}$ is a Banach space isomorphic to
\[
\bigoplus_{I \subset \underline n} W^{k-(\# I),p}(\RR),
\]
equipped with the norm
\[
\| (f_I)_{I \subset \underline n} \| = \left( \sum_{I \subset \underline n} \|f_I\|_{k-(\# I),p}^p \right)^{1/p}.
\]
\end{lemma}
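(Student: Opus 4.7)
The plan is to use the coordinate decomposition from Proposition~\ref{prop:SheafAlg}(b) to reduce everything to the classical theory of Sobolev spaces on $\RR$. I would begin by writing any $F \in \C^\infty_c(\HC)$ as $F = \sum_{I \subset \underline n} f_I \otimes \xi^I$ with $f_I = F(a_I) \in C_c^\infty(\RR)$. Because $z$ acts on $C^\infty(\RR)$ by $-d/dx$ and $F$ is $\g_0$-equivariant, one has $F(z^j a_I) = (-1)^j f_I^{(j)}$ for every $j \in \NN$. Moreover, the antipode satisfies $S(z^j a_I) = \varepsilon_{I,j}\, z^j a_I$ for some sign $\varepsilon_{I,j} \in \{\pm 1\}$, since $S(z)=-z$ and reversing the order of the anticommuting generators $a_{i_1}\cdots a_{i_k}$ only changes the expression by a global sign. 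Combining these observations with the definition of the left regular action gives
\[
(L_{z^j a_I} F)(1;x) = (-1)^{|a_I||F|} F(S(z^j a_I); x) = \pm\, f_I^{(j)}(x),
\]
so $\|(L_{z^j a_I} F)(1)\|_p = \|f_I^{(j)}\|_p$ regardless of the surviving signs.

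Next I would substitute this identity into the definition of $\|\cdot\|_{k,p}$ and re-index to obtain
\[
\|F\|_{k,p}^p \;=\; \sum_{j+\#I \leq k} \|f_I^{(j)}\|_p^p \;=\; \sum_{\substack{I \subset \underline n\\ \#I \leq k}} \,\sum_{j=0}^{k-\#I} \|f_I^{(j)}\|_p^p \;=\; \sum_{\substack{I \subset \underline n\\ \#I \leq k}} \|f_I\|_{k-\#I,p}^p.
\]
This exhibits $F \mapsto (f_I)_{I \subset \underline n}$ as an isometric embedding of $(\C^\infty_c(\HC), \|\cdot\|_{k,p})$ into the $\ell^p$-sum of classical Sobolev spaces appearing on the right. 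Under the hypothesis $k \geq n = \dim V$, every $I \subset \underline n$ satisfies $\#I \leq k$, so each component $f_I$ is controlled by $\|F\|_{k,p}$; in particular $\|F\|_{k,p}=0$ forces all $f_I = 0$ and hence $F = 0$. This is precisely where the hypothesis $k \geq \dim V$ is needed: for $k < n$ the components $f_I$ with $\#I > k$ would be invisible to the seminorm.

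For the completion statement, I would use that completion commutes with finite direct sums equipped with the $\ell^p$-norm, together with the standard fact that $C_c^\infty(\RR)$ is dense in each $W^{m,p}(\RR)$ for $1 \leq p < \infty$ (mollification plus a compactly supported cutoff). Hence the image of $\C^\infty_c(\HC)$ is dense in $\bigoplus_{I \subset \underline n} W^{k-\#I,p}(\RR)$, and the isometric embedding extends uniquely to the claimed isometric isomorphism of Banach spaces. The only delicate point is the sign-bookkeeping for $S(z^j a_I)$ in the very first step; since those signs disappear inside the $L^p$-norm, the argument reduces entirely to familiar facts about $W^{m,p}(\RR)$.
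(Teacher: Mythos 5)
Your proof is correct and follows essentially the same route as the paper: reduce to coordinates via $F \mapsto (f_I)_I$, compute that $\|f\otimes\xi^I\|_{k,p} = \|f\|_{k-\#I,p}$ (the paper suppresses the sign bookkeeping for $S(z^ja_I)$ that you spell out, since it is irrelevant inside the $L^p$-norm), and conclude by identifying the completion componentwise with the classical Sobolev spaces.
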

\begin{proof}
Recall that after the choice of basis $(a_i)_{i=1}^n$ of $V$ we can identify 
\[
\C_c^\infty(\HC) \cong \bigoplus_{I \subset \underline n} C_c^\infty(\RR)
\]
as vector spaces via $F \mapsto \sum_{I \subset \underline n} f_I \otimes \xi^I,$ where $f_I = F(a_I).$ If $F = f \otimes \xi^I$ and $k \geq \# I,$ then $\| F \|^p_{k,p}$ is given by 
\[
\sum_{j+(\# J) \leq k} \|(L_{z^j a_J} F)(1) \|_p^p = \sum_{j+(\# J) \leq k} \| F(z^j a_J) \|_p^p = \sum_{j \leq k-(\# I)} \|f^{(j)} \|_p^p,
\]
hence
\[
\| f \otimes  \xi^I \|_{k,p} = \| f \|_{k- (\# I),p}.
\]
Clearly, if $k < |I|,$ then $\|f \otimes \xi^I\|_{k,p} = 0,$ which shows that $\| \cdot \|_{k,p}$ is a norm on $\C_c^\infty(\HC)$ only for $k \geq \dim V.$ The completion of $\C^\infty_c(\HC)$ is the direct sum of the completions of $(C_c^\infty(\RR), \|\cdot \|_{k-(\# I),p}).$ But these completions are precisely the classical Sobolev spaces $(W^{k-(\# I),p}(\RR), \| \cdot \|_{k-(\# I),p}),$ see~\cite[Theorem 3.23]{Adams}. 
\end{proof}

\begin{definition}
We denote the completion of $\C^\infty_c(\HC)$ with respect to  $\| \cdot \|_{k,p}$ by $W^{k,p}(\HC).$
\end{definition}

\begin{theorem} \label{thm:BanachAlgebra}
If $\HC = \HC(V)$ with $\dim V = n,$ then $(W^{n,1}(\HC),*)$ is a Banach algebra.
\end{theorem}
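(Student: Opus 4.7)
The plan is to establish the submultiplicative bound $\|F*G\|_{n,1} \leq \|F\|_{n,1} \|G\|_{n,1}$ for all $F, G \in \C^\infty_c(\HC)$. Since convolution is already associative and $\RR$-bilinear on this dense subspace (Corollary~\ref{cor:Conv}, Proposition~\ref{prop:ConvBasic}), such an estimate allows me to extend $*$ uniquely and continuously to the completion $W^{n,1}(\HC)$, giving it the structure of a Banach algebra.

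First I would reduce to coordinates: write $F = \sum_{I \subset \underline n} f_I \otimes \xi^I$ and $G = \sum_{J \subset \underline n} g_J \otimes \xi^J$, where the preceding lemma identifies $\|F\|_{n,1}$ with $\sum_I \|f_I\|_{n - \#I, 1}$ and similarly for $G$. The coordinate formula of Remark~\ref{rem:Coord} reads
\[
(f_I \otimes \xi^I) * (g_J \otimes \xi^J) = \pm (f_I * g_J)^{(m_{IJ})} \otimes \xi^{(I \Delta J)^c}, \qquad m_{IJ} := \#(I^c \cap J^c).
\]
By bilinearity and the triangle inequality, the theorem reduces to showing, for each pair $I, J$ with $K := (I \Delta J)^c$,
\[
\|(f_I * g_J)^{(m_{IJ})}\|_{n - \#K, 1} \leq \|f_I\|_{n - \#I, 1} \|g_J\|_{n - \#J, 1},
\]
since the pair $(I,J)$ determines $K$ uniquely, so summing over $I, J$ will regroup into $\|F\|_{n,1} \|G\|_{n,1}$.

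The heart of the argument is a combinatorial identity. Using $\#(I \Delta J) = \#I + \#J - 2\#(I \cap J)$ and $m_{IJ} = n - \#(I \cup J) = n - \#I - \#J + \#(I \cap J)$, a direct calculation gives
\[
m_{IJ} + (n - \#K) = n - \#(I \cap J).
\]
Thus the left-hand side above is precisely the classical $W^{n - \#(I \cap J),1}(\RR)$-norm of $f_I * g_J$. For each integer $j \leq n - \#(I \cap J)$, the bound $\#(I \cup J) \leq n$ permits a split $j = a + b$ with $a \leq n - \#I$ and $b \leq n - \#J$ (for instance $a = \min(j, n - \#I)$), and the resulting assignment $j \mapsto (a, b)$ is injective. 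Young's inequality then yields
\[
\|(f_I * g_J)^{(j)}\|_1 = \|f_I^{(a)} * g_J^{(b)}\|_1 \leq \|f_I^{(a)}\|_1 \|g_J^{(b)}\|_1,
\]
and summing over $j$, then extending the sum to all admissible pairs $(a,b)$ with $a \leq n - \#I$, $b \leq n - \#J$, produces the individual estimate.

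The main obstacle is not analytic but combinatorial: obtaining the index identity $m_{IJ} + (n - \#K) = n - \#(I \cap J)$ and verifying that every $j$ in the relevant range admits a splitting $j = a + b$ with $a \leq n - \#I$ and $b \leq n - \#J$. These two facts together are precisely what forces the Sobolev order to be $n = \dim V$ and explain why the convolution product closes up only at that threshold.
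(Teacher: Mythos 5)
Your proof is correct and follows essentially the same route as the paper: reduce to coordinates, count how many derivatives can be absorbed by each convolution factor, and apply Young's inequality, with your identity $m_{IJ} + (n - \#K) = n - \#(I\cap J)$ being just a repackaging of the paper's bookkeeping via the disjoint sets $A = I\setminus J$, $B = J\setminus I$, $C = I\cap J$. (One pedantic point: $\|(f_I*g_J)^{(m_{IJ})}\|_{n-\#K,1}$ is the partial sum of the $W^{n-\#(I\cap J),1}$-norm over derivative orders $m_{IJ}\le j\le n-\#(I\cap J)$ rather than the full norm, but this only strengthens your estimate.)
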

\begin{proof}
Let $(a_i)_{i=1}^n$ be the basis used to define the norm $\|\cdot \|_{n,1},$ and let $F,G \in W^{n,1}(\HC).$ Then
\[
\| F*G \|_{n,1} = \sum_{j+(\# J) \leq n} \| F*G(z^ja_J) \|_1 \leq \sum_{j+(\# J) \leq n} \sum_{I \subset \underline n} \| F(a_I)*G(z^ja_{I^c} a_J)\|_1,
\]
by definition of the convolution product and the triangle inequality. Writing $C := I \cap J$ and $A:= I \setminus C, \, B = J \setminus C$ we can express the last sum as a sum over pairwise disjoint subsets $A,B$ and $C$ of $\underline n.$ In the following, the prime indicates that $A,B,C$ are pairwise disjoint, and we set $a= \# A , b= \# B$ and $c = \# C.$
\begin{align*}
\| F*G \|_{n,1} &\leq \sideset{}{'}\sum_{A,B,C} \; \sum_{j \leq n-b-c} \| (F(a_{A \cup C})*G(a_{(A \cup C)^c} a_{B \cup C}))^{(j)}\|_1  \\
&=  \sideset{}{'}\sum_{A,B,C} \; \sum_{j \leq n-b-c} \| (F(a_{A \cup C})*G(a_{(A \cup B)^c}))^{(j+b)} \|_1.
\end{align*}
Now we need to `distribute' the $(j+b)$-th derivative over the two factors in the convolution product. The first factor is in $W^{n-a-c,1}(\RR)$ and the second is in $W^{a+b,1}(\RR).$ Clearly, we can differentiate the second factor $b$ times. Then the factors are differentiable of order $n-a-c$ and $a,$ respectively. But $n-a-c + a = n-c \geq j+b \geq j,$ since $j+b+c \leq n.$ Hence we can distribute the remaining $j$ derivatives over the two factors. This shows that the last sum is less than or equal to 
\[
\left( \sum_{i+(\# I) \leq n} \|F(z^i a_I) \|_1  \right) \left( \sum_{j+(\# J) \leq n} \| G(z^j a_J)\|_1 \right) = \|F\|_{n,1} \|G \|_{n,1}.
\]
\end{proof}


\begin{bibdiv}

\begin{biblist}

\bibselect{hc}

\end{biblist}

\end{bibdiv}

\vskip2cm
{\sc Alexander Alldridge \\ Mathematisches Institut, Universit\"at zu K\"oln, \\ Weyertal 86-90, 50931 K\"oln, Germany.} \\
{\it email adress:}  {\tt alldridg@math.uni-koeln.de}
\vskip1cm
{\sc Joachim Hilgert \\ Institut f\"ur Mathematik, Universit\"at  Paderborn, \\ Warburger Str. 100, 33098 Paderborn, Germany.} \\
{\it email adress:}  {\tt hilgert@math.uni-paderborn.de}
\vskip1cm
{\sc Martin Laubinger  \\ Institut f\"ur Mathematik, Universit\"at  Paderborn, \\ Warburger Str. 100, 33098 Paderborn, Germany.} \\
{\it email adress:}  {\tt mlaubing@math.uni-paderborn.de}

\end{document}